\documentclass[12pt]{article}
	
	\usepackage[margin=1in]{geometry}  
	\usepackage{graphicx}              
	\usepackage{amsmath}               
	\usepackage{amsfonts}              
	\usepackage{amsthm}                
	\usepackage{mathtools}
	\usepackage{xcolor}
	\usepackage{hyperref}
	\usepackage{tikz-cd}
    \usepackage{amssymb}
	
	\newtheorem{thm}{Theorem}[section]
	\newtheorem{lem}[thm]{Lemma}
	\newtheorem{prop}[thm]{Proposition}
	\newtheorem{cor}[thm]{Corollary}
	
	\newtheorem{defn}[thm]{Definition}
	
	\newtheorem{remark}[thm]{Remark}

\newcommand{\HH}{\mathcal H}
\newcommand{\RR}{\mathbb R}
\newcommand{\ddc}{\sqrt{-1}\partial\bar\partial}
\newcommand{\idealprec}{\mathrel{\prec\!\prec}}
\newcommand{\Ga}{\mathrm{G}}
\newcommand{\IG}{\mathrm{I}}
\newcommand{\Pol}{\operatorname{Pol}}
\newcommand{\Herm}{\mathrm{Herm}}
\newcommand{\Psh}{\mathrm{Psh}}

\begin{document}

\title{The coerciveness and generalised Monge-Amp\`{e}re equations}

\author{Gao Chen, Kartick Ghosh, and Ziyue Wang}
\maketitle

\begin{abstract}
  In this paper, we prove the equivalence between coerciveness and the existence of a solution to the generalised Monge-Amp\`{e}re equations for right-Noetherian polynomials satisfying several assumptions. This includes the case of interlacing polynomials in strict proper position. In particular, we have generalised Chu-Lee's result to supercritical but not hypercritical LYZ equations.
\end{abstract}

\tableofcontents 

\section{Introduction}

The search for a canonical metric in a K\"ahler class is a central problem in complex differential geometry. The most important one is a constant scalar curvature K\"ahler (cscK) metric. The K\"ahler-Ricci-flat case was solved by Yau in \cite{YauCalabiConjecture} using the complex Monge-Amp\`ere equation.

From the variational point of view, cscK metrics are critical points of Mabuchi's K-energy \cite{MabuchiKEnergy}. It is convex along smooth geodesics. Chen's construction of weak geodesics \cite{ChenSpaceKahlerMetrics} and Darvas's study of the $\mathcal{E}^p$ space made it possible to conjecture that the existence of cscK metrics is equivalent to the coerciveness of the K-energy with respect to the $d_1$-distance after taking account of the automorphism group. This conjecture was proved by Berman-Darvas-Lu \cite{BermanDarvasLuRegularity} and Chen-Cheng \cite{ChenChengCSCKEstimates,ChenChengCSCKExistence} in two directions. Based on these results, on projective manifolds, recently Trusiani \cite{TrusianiCscK}, with an appendix by Boucksom, proved the equivalence of the existence of a cscK metric with a uniform algebraic criterion: the uniform K-stability.

By writing the K-energy as the sum of entropy and the J-functional, Chen introduced the J-functional \cite{ChenJFlow}, whose critical point is the J-equation proposed by Donaldson \cite{DonaldsonMomentMaps} from a different motivation. Using Song-Weinkove's analytic results \cite{SongWeinkoveJFlow}, Collins-Sz\'ekelyhidi \cite{CollinsSzekelyhidiJFlow} proved the equivalence between the solvability of the J-equation and the coerciveness of the J-functional. The algebraic criterion was proved by Collins-Sz\'ekelyhidi \cite{CollinsSzekelyhidiJFlow} in the toric case, and by the first author in the K\"ahler case \cite{ChenJEquationDHYM}, with an improvement by Song \cite{SongNakaiMoishezon} addressing the uniform issue.

The J-equation is also the large radius limit of the LYZ equation posed by Leung-Yau-Zaslow \cite{LeungYauZaslowFourierMukai} motivated by the study of the special Lagrangian equation using mirror symmetry. Based on Collins-Jacob-Yau's \cite{CollinsJacobYauLagrangianPhase} analytic results, the first author \cite{ChenJEquationDHYM} also proves the equivalence of the solvability of the supercritical LYZ equation with a uniform algebraic criterion along a test family. Later, Chu-Lee-Takahashi \cite{ChuLeeTakahashiNakaiMoishezon} relaxed the uniform condition in the K\"ahler case, and removed the test family in the projective case. However, the test family can't be removed in the K\"ahler but not projective case by the counterexample of Zhang \cite{ZhangSupercriticalDHYM}. This makes the algebraic criterion hard to study. Nevertheless, Chu-Lee \cite{ChuLeeHypercriticalDHYM} proved that, based on the geodesic theory provided by Collins-Yau \cite{CollinsYauGeodesics}, the solvability of the hypercritical LYZ equation is equivalent to the coerciveness when the complex dimension is at least 4. 

The supercritical but not hypercritical case of the LYZ equation motivates the study of our paper. However, we would like to study it in a more general framework introduced by Lin \cite{LinInverseSigmaConvexity,LinInverseSigmaConvexityCorrigendum} called a right-Noetherian polynomial. This has been studied in more detail and generalised to more general settings by Fang-Ma \cite{FangMaGarding,FangMaIdealGarding}. In the projective case, the algebraic criterion has been proved recently by the first author and two collaborators Nie and Xu \cite{ChenNieXuNumericalCriterion}, who used the analytic tool by Lin \cite{LinInverseSigmaSolvability} and extended an earlier work by Datar-Pingali \cite{DatarPingaliNumericalCriterion}. In the K\"ahler but not projective case, the first and second authors and Nie \cite{ChenGhoshNieGeodesics} proved the existence of weak geodesics, and defined the $d_p$ distance. 

In this paper, we prove an existence-coerciveness equivalence in the same setting as \cite{ChenGhoshNieGeodesics} with several additional assumptions. More precisely, let $(X,\omega)$ be a compact connected K\"ahler manifold of complex dimension $n\ge 2$ since the $n=1$ case is trivial to study. Let $\chi$ be a smooth real closed $(1,1)$-form, and $f_1$ and $f_2$ be right-Noetherian polynomials with degree $n$. Then we define
\[
 f_i(x)=\sum_{k=0}^n c_k^{(i)}\binom nk x^k,
 \qquad g_i(\lambda)=\sum_{k=0}^n c_k^{(i)}\sigma_k(\lambda),
 \qquad c_n^{(i)}=1, \qquad c_{n-1}^{(2)}=-1,
\]
and
\[
 g_i(\chi_\varphi)=\sum_{k=0}^n c_k^{(i)}\binom nk
       \chi_\varphi^k\wedge\omega^{n-k},
 \qquad \chi_\varphi=\chi+\ddc\varphi,\quad i=1,2.
\]
The conditions $c_n^{(i)}=1$ and $c_{n-1}^{(2)}=-1$ are not essential, and can be achieved by normalizations. We may omit the normalization process and study the general $c_{n-1}^{(2)}$ and $c_n^{(i)}>0$ in this paper to simplify the notation.

Define
\[
 \HH=\{\varphi\in C^\infty(X,\RR):
           \chi_\varphi\in\Upsilon_{g_2}(\omega)\},
\] and the $d_p$, $p\in[1,\infty)$, distance on it by
\[
 d_p(\varphi_0,\varphi_1)
       =\inf_{\substack{\varphi_t\in\HH\\
              \varphi_t|_{t=0}=\varphi_0,\ \varphi_t|_{t=1}=\varphi_1}}
          \int_0^1\Big(\int_X|\frac{d\varphi_t}{dt}|^p g_2(\chi_{\varphi_t})\Big)^{1/p}\,dt.
\]
Without loss of generality, we assume that $0\in\HH$. For $i=1,2$, the functionals $I_{i}$ on $\HH$ are defined by 
\[
I_{i}(\varphi)=\int_{X}\varphi\sum_{k=0}^{n}\binom{n}{k}c_{k}^{(i)}\sum_{j=0}^{k}\frac{1}{j+1}\binom{k}{j}\chi^{k-j}\wedge(\sqrt{-1}\partial\bar{\partial}\varphi)^{j}\wedge\omega^{n-k},
\]
$J_i=-I_i$, and they satisfy
\begin{equation}
    \label{functional properties}
    \frac{d}{dt}I_{i}(\varphi_t)=\int_{X}\frac{d\varphi_t}{dt}g_{i}(\chi_{\varphi_t}).
\end{equation}
Following the idea of Collins-Sz\'ekelyhidi \cite{CollinsSzekelyhidiJFlow}, Fu-Yau-Zhang \cite{FuYauZhangFlow}, and Chu-Lee \cite{ChuLeeHypercriticalDHYM}, we use the flow
\begin{equation}\label{eq:flow}
 \frac{\partial \varphi}{\partial t}=F_\omega(\chi_\varphi),\qquad
 F_\omega(\chi_\varphi):=\frac{g_1(\chi_\varphi)}{g_2(\chi_\varphi)},
 \qquad\varphi(0)=\varphi_0\in\HH
\end{equation}
to study the coerciveness problem.

To make sure that the flow is parabolic, we strengthen the proper condition assumption $f_2\prec f_1$ in \cite{FangMaGarding} to a strongly strict version. See Definition \ref{SSPrec} for more details. To apply the Evans-Krylov estimate, we also require the concavity of $\frac{g_1}{g_2}$ on $\mathcal{H}$, which requires that $f_2\idealprec f_1$ in the sense of Definition 9.4 of \cite{FangMaIdealGarding}.

We define the cohomological constant $V_i:=\int_X g_i(\chi)$, and $v_i=\frac{V_i}{\int_X \omega^n}$. Then our main theorem is the following:

\begin{thm}\label{thm:main}
Assume $f_1, f_2$ are right-Noetherian, $f_2\prec f_1$ strongly strictly, $f_2\idealprec f_1$, and $V_1=0$. Then the following are equivalent:
\begin{enumerate}
\item The equation $g_1(\chi_\varphi)=0$ has a solution
      $\varphi\in\HH$ with $I_2(\varphi)=0$.
\item There exist $\delta>0$ and $C$ such that
\[
 J_1(\varphi)\ge\delta d_1(0,\varphi)-C
 \quad\text{for all }\varphi\in\HH\text{ with }I_2(\varphi)=0.
\]
\end{enumerate}
\end{thm}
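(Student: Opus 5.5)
For the direction (1)$\Rightarrow$(2), I will follow Chu--Lee \cite{ChuLeeHypercriticalDHYM} and Darvas-type arguments, using the weak geodesics and the $d_1$ structure of \cite{ChenGhoshNieGeodesics}. Let $\varphi_*\in\HH$ solve $g_1(\chi_{\varphi_*})=0$ with $I_2(\varphi_*)=0$.

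\textbf{Step 1 (convexity and minimality).} The condition $f_2\prec f_1$ gives $g_1/g_2$-type positivity, so $J_1$ is convex along (weak, $C^{1,1}$-approximable) geodesics of \cite{ChenGhoshNieGeodesics}. Indeed, the second variation of $J_1$ along a geodesic is a nonnegative quantity involving $\partial_t\varphi$ and the linearisation of $g_1$ relative to $g_2$. Since $dJ_1=-g_1(\chi_\varphi)$, the solution $\varphi_*$ is a critical point, hence a global minimiser of $J_1$ on $\HH$.

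\textbf{Step 2 (strict convexity and linear growth).} I want a quantitative lower bound. Join $\varphi_*$ to $\varphi$ with $I_2(\varphi)=0$ and $d_1(\varphi_*,\varphi)\ge 1$ by a weak geodesic, and let $\psi$ be the point at unit $d_1$-distance from $\varphi_*$. By convexity,
\[
J_1(\varphi)-J_1(\varphi_*)\ge d_1(\varphi_*,\varphi)\big(J_1(\psi)-J_1(\varphi_*)\big).
\]
It then suffices to show that $J_1(\psi)-J_1(\varphi_*)\ge\delta>0$ uniformly on the unit $d_1$-sphere inside $\{I_2=0\}$. I would argue by contradiction with a compactness argument in the $d_1$-completion, together with uniqueness of the solution up to constants. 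Uniqueness follows from the maximum principle, using the ellipticity that comes from strong strictness. The normalisation $I_2=0$ kills constants. Geodesics preserve $I_2$ affinely, since $I_2$ is affine along geodesics, so the normalisation is preserved.

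Turning to (2)$\Rightarrow$(1), I will run the flow \eqref{eq:flow} from $\varphi_0=0$. Strong strictness makes it parabolic and keeps it in $\HH$. By \eqref{functional properties}, $I_2$ changes by $\int g_1=V_1=0$, so $I_2(\varphi_t)=0$ along the flow. Moreover
\[
\frac{d}{dt}J_1=-\int_X \frac{g_1^2}{g_2}\le 0,
\]
so $J_1$ decreases and coerciveness gives $d_1(0,\varphi_t)\le C$. The key analytic step is to upgrade this uniform $d_1$ bound to a $C^0$ bound. I would try, in order:
\begin{itemize}
\item[(a)] an $\sup\varphi_t\le C$ bound via $L^1$ control and the subsolution/$\omega$-psh-type mean value inequality;
\item[(b)] a lower bound via an ABP or Sz\'ekelyhidi $C$-subsolution argument.
\end{itemize}
The existence of a $C$-subsolution should itself come from coerciveness, for instance via a cone condition as in \cite{CollinsSzekelyhidiJFlow}. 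Concavity of $g_1/g_2$, i.e.\ $f_2\idealprec f_1$, then permits second-order estimates and Evans--Krylov, which give uniform $C^k$ bounds. Finally, monotonicity of $J_1$ gives $\int_0^\infty\int_X g_1^2/g_2\,dt<\infty$. Together with the estimates, a subsequence converges to $\varphi_\infty$ with $g_1(\chi_{\varphi_\infty})=0$ and $I_2(\varphi_\infty)=0$.

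I expect the main obstacle to be the $C^0$ (or subsolution) estimate in the second direction. Converting a $d_1$ bound into a uniform bound requires a capacity/Kołodziej-type argument adapted to $g_2(\chi_\varphi)$ rather than $\omega_\varphi^n$. The strict positivity in Step 2 of the first direction is the secondary difficulty.
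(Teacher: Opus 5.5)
\textbf{Direction (1)$\Rightarrow$(2).} Step 2 does not go through.
\begin{itemize}
\item $J_1$ has no entropy term, so $d_1$-bounded sublevel sets are only weakly ($L^1$) compact. They are not $d_1$-compact as in Berman--Darvas--Lu, and $d_1$ is not continuous under weak limits.
\item Concretely, a sequence $\psi_j$ on the unit $d_1$-sphere can converge weakly to $\varphi_*$ itself, for example by concentrating a fixed amount of $g_2(\chi_{\psi_j})$-mass where $|\psi_j|$ is large. Uniqueness of the minimiser then gives no contradiction.
\item The argument would also need an extension of $J_1$ to the completion that is convex along weak geodesics and lower semicontinuous, together with uniqueness of minimisers among weak potentials. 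None of this is available.
\end{itemize}
The paper instead proves a quantitative strict convexity. With $\varphi_*=0$, it sets $\mathcal A(\varphi)=I_2(\varphi)-\int_X\varphi\,g_2(\chi_\varphi)$. It shows $\mathcal A\ge a\,d_1(0,\cdot)-b$ on $\{I_2=0\}$, using $\int_0^1tq\,dt\ge a_n\int_0^1q\,dt$ for the nonnegative polynomial $q_\varphi(t)=-\frac{d^2}{dt^2}I_2(t\varphi)$. It then shows that $0$ still minimises $\mathcal J_\varepsilon=J_1-\varepsilon\mathcal A$ for small $\varepsilon>0$. The key input is that the perturbed operator $g_{1,\varepsilon}=g_1-\varepsilon\ddc\varphi\wedge g_2'$ keeps $g_{1,\varepsilon}'-F_\varepsilon g_2'\ge0$ and $F_\varepsilon\le1-n\varepsilon$. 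This follows from the polynomial perturbation $f_2\prec f_1+\eta f_2'$. The second variation is computed on smooth $\rho$-geodesics with an $O(\rho^2)$ error, so only endpoint values of the functional are ever used. Then $J_1\ge\varepsilon\mathcal A-C$ gives coercivity directly.

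\textbf{Direction (2)$\Rightarrow$(1).} Your setup matches the paper: run the flow, $I_2$ is preserved, $J_1$ decreases, and $d_1(0,\varphi_t)\le C$. The problem is the step you flag.
\begin{itemize}
\item A $d_1$ bound gives only finite energy, and $\mathcal E^1$ potentials need not be bounded. So no capacity argument turns it into a $C^0$ bound.
\item Saying ``the subsolution should come from coerciveness'' leaves out the entire construction.
\end{itemize}
The paper never proves $C^0$ bounds or convergence of the flow. Its route is:
\begin{itemize}
\item From the $d_1$ bound it gets $\int_X|u_t|\chi_{u_t}^n\le C$ for $u_t=\varphi_t-\sup_X\varphi_t$. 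Hence weak limits $u_\infty$ lie in $\mathcal E^1(X,\chi)$ and have zero Lelong numbers.
\item Along times with $\int_XF_\omega(\chi_{u_j})^2g_2(\chi_{u_j})\to0$, concavity of $F$ under mollification shows $\chi_{u_\infty}\in\bar\Upsilon_{g_1}(\omega)$.
\item This degenerate cone condition cannot survive regularisation. So the paper reruns the argument for the perturbed pair $f_3=f_1-\epsilon_3f_2+\epsilon_3v_2$, $f_4=f_2+\epsilon_4f_1$. It checks the polynomial hypotheses and that coercivity transfers to $J_3$ on $\HH_4$, and obtains a current in $\bar\Upsilon_{g_3}(\omega)$.
\item It regularises this current via B{\l}ocki--Ko{\l}odziej at the cost of $\delta_{\epsilon_3}\omega$; this is where zero Lelong numbers are needed.
\item The inclusion $(1-\delta_{\epsilon_3})\overline{\Upsilon^1_{g_3}}\subset\delta_{\epsilon_3}\mathbf 1+\Upsilon^1_{g_1}$ then yields a smooth strict subsolution, and Lin's solvability theorem produces the solution.
\end{itemize}
Your plan is missing three ideas: the weak limit with zero Lelong numbers, the polynomial perturbation that supplies strictness, and the appeal to an elliptic existence theorem.
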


We briefly describe the proof. The strongly strict proper position condition and the ideal proper position condition ensure the long-time existence of the flow~\eqref{eq:flow}. Under coerciveness, a subsequence of the flow converges to a current satisfying the degenerate cone condition, and has zero Lelong number by \cite{GuedjZeriahiFiniteEnergy}. A compatible perturbation of both polynomials provides the strict positivity needed to regularize this limit current. Combining B{\l}ocki and Ko{\l}odziej's local regularization
\cite{BlockiKolodziejRegularization} with Lin's solvability theorem
\cite{LinInverseSigmaSolvability} then gives a smooth solution.

For the converse, we take a solution as the reference potential and
introduce an analogue of Aubin's $I-J$ functional called $\mathcal{A}(\varphi)$. Then we use a second-variation formula on the regularized geodesic of \cite{ChenGhoshNieGeodesics} to obtain a lower bound on $J_1(\varphi)-\varepsilon \mathcal{A}(\varphi)$. This gives the coerciveness condition in Theorem \ref{thm:main}.

The paper is organized as follows. Sections 2-3 recall the
Fang-Ma-G{\aa}rding framework and study the strict version of the
proper position condition. Sections 4-5 prove the volume comparison and
construct the compatible perturbation. Sections 6-7 establish
short-time and long-time existence of the flow. The two implications
in Theorem \ref{thm:main} are proved in Sections 8 and 9, respectively. Section 10 proves that all strictly interlacing polynomials, including the supercritical LYZ equation, satisfy all the conditions.

\paragraph{\textbf{Declaration on the use of AI.}}
The decomposition of the whole paper into propositions, as well as the original version of the propositions, is due to the authors. Then we asked ChatGPT 6 Astra to produce counterexamples or proofs of special cases of each proposition. Then we found ideas to change the propositions inspired by the feedback. After several rounds of intensive discussions, we ended up with the right propositions. Then we checked and edited the proof substantially for mathematical clarity. ChatGPT 6 Astra was also used to assist with grammar correction.

\section{Preliminaries}

 This section recalls the definitions and theorems from
\cite{FangMaGarding}, \cite{FangMaIdealGarding}, and \cite{LinInverseSigmaConvexity} with a correction in \cite{LinInverseSigmaConvexityCorrigendum}.

\begin{defn}[Definition 3.1(7),(9) of \cite{FangMaGarding}]
Let \[\sigma_k(x_1,\ldots,x_n)=\sum_{i_1<\cdots<i_k}x_{i_1}\cdots x_{i_k}\] be the \(k\)-th elementary symmetric
polynomial, with \(\sigma_0=1\).

If \(h(x)=\sum_{k=0}^{d}c_k \binom nk x^k\) and \(n\ge d\), its ordinary polarization is
\[
  \Pol_n(h)(x_1,\ldots,x_n)
  =
  \sum_{k=0}^{d}c_k\sigma_k(x_1,\ldots,x_n).
\]
It is the unique symmetric multi-affine polynomial satisfying
\[
  \Pol_n(h)(x,\ldots,x)=h(x).
\]

More generally, let
\(\kappa=(\kappa_1,\ldots,\kappa_m)\in\mathbb N^m\), and
suppose \(h(x_1,\ldots,x_m)=\sum_{\alpha\le\kappa}
c_\alpha x^\alpha\), where
\(\alpha=(\alpha_1,\ldots,\alpha_m)\),
\(x^\alpha=\prod_i x_i^{\alpha_i}\), and
\(\alpha\le\kappa\) means
\(\alpha_i\le\kappa_i\) for every \(i\).  The
\(\kappa\)-polarization is
\[
  \Pi^\uparrow_{\kappa}(h)
  =
  \sum_{\alpha\le\kappa}c_\alpha
  \prod_{i=1}^m
  \frac{\sigma_{\alpha_i}(x_{i1},\ldots,x_{i\kappa_i})}
       {\binom{\kappa_i}{\alpha_i}}.
\]
For example, if $f_1(x)$, $f_2(x)$ are univariable functions with degree at most $n$, $g_1(x)$, $g_2(x)$ are their polarizations on $\mathbb{R}^n$, and $\kappa=(n,1)$, then
\[
  \Pi^\uparrow_{\kappa}(f_2(x)y+f_1(x))
  =
  g_2(x)y+g_1(x).
\]
\end{defn}

\begin{defn}[{Definitions 2.2 and 2.3} of \cite{FangMaGarding}]
For \(n\ge1\), write
\begin{align*}
  \Gamma_n^+
  &=\{x=(x_1,\ldots,x_n)\in\RR^n:x_i>0\text{ for every }i\},\\
  \overline{\Gamma_n^+}
  &=\{x=(x_1,\ldots,x_n)\in\RR^n:x_i\ge0\text{ for every }i\}.
\end{align*}
A set \(\Upsilon\subseteq\RR^n\) passes the \emph{positive ray test}
\emph{(PRT)} if
\[
  \Upsilon+\overline{\Gamma_n^+}\subseteq \Upsilon.
\]
It is \emph{negative ray terminating} \emph{(NRT)} if, for
every \(x\in \Upsilon\), the set
\[
  (\{x\}-\overline{\Gamma_n^+})\cap \Upsilon
\]
is bounded.
\end{defn}

We now recall the definition of a G{\aa}rding polynomial from \cite{FangMaGarding} and an ideal G{\aa}rding polynomial from \cite{FangMaIdealGarding}. To distinguish this notion from the classical one, we call such a polynomial a Fang-Ma-G{\aa}rding polynomial or an ideal Fang-Ma-G{\aa}rding polynomial. To make the definition of a Fang-Ma-G{\aa}rding polynomial parallel with the ideal ones, we use an equivalent definition by Theorem~1.1(3) of \cite{FangMaGarding}.

\begin{defn}[Theorem 1.1(3) of
\cite{FangMaGarding}, Definition 1.1 of \cite{FangMaIdealGarding}]
\label{FMGdefinition}
Let \(p\in\RR[x_1,\ldots,x_n]\) be nonzero.  For a
multi-index
\(\alpha=(\alpha_1,\ldots,\alpha_n)\in\mathbb Z_{\ge0}^n\), write
\[
  \partial^\alpha p
  =
  \frac{\partial^{|\alpha|}p}
       {\partial x_1^{\alpha_1}\cdots\partial x_n^{\alpha_n}},
  \qquad |\alpha|=\alpha_1+\cdots+\alpha_n.
\]
Then \(p\) is called a \emph{Fang-Ma-G{\aa}rding polynomial} if and only if,
for every \(\alpha\) such that \(\partial^\alpha p\not\equiv0\),
\begin{enumerate}
  \item the set \(\{\partial^\alpha p>0\}\) has a unique connected
  component \(\Upsilon_{\partial^\alpha p}\) passing PRT; and
  \item this component is contained in the corresponding components
  of all its first partial derivatives:
  \[
    \Upsilon_{\partial^\alpha p}
    \subseteq \Upsilon_{\partial_i\partial^\alpha p}
    \qquad(1\le i\le n).
  \]
\end{enumerate}
Here \(\Upsilon_h=\RR^n\) when \(h\equiv c>0\), and by convention \(\Upsilon_0=\RR^n\).
The component \(\Upsilon_p\) is called the \emph{Fang-Ma-G{\aa}rding component} of
\(p\), which is a generalization of the $\Upsilon$-stable component of \cite{LinInverseSigmaConvexity}. If, in addition, the Fang-Ma-G{\aa}rding component of $\partial^\alpha p$ is convex for all \(\alpha=(\alpha_1,\ldots,\alpha_n)\in\mathbb Z_{\ge0}^n\), we call $p$ an ideal Fang-Ma-G{\aa}rding polynomial, which is again a generalization of the convex property in \cite{LinInverseSigmaConvexity}. We denote the class of Fang-Ma-G{\aa}rding polynomials in \(n\) variables and the ideal ones by
\(\Ga_n\) and \(\IG_n\), and put \(\Ga=\bigcup_{n\ge1}\Ga_n, \IG=\bigcup_{n\ge1}\IG_n\).
\end{defn}

\begin{thm}[Theorem 8.13 of \cite{FangMaGarding}, Theorem 1.3 of \cite{FangMaIdealGarding}]
\label{kappa-polarization}
Let \(p\) be a polynomial of multidegree at most
\(\boldsymbol{\kappa}\), meaning
\(\deg_{x_i}p\le\kappa_i\) for every \(i\).  If \(p\in\Ga\) and its Fang-Ma-G{\aa}rding component \(\Upsilon_p\) is NRT, then
\(\Pi^\uparrow_{\boldsymbol{\kappa}}(p)\in\Ga\). If \(p\in\IG\), then
\(\Pi^\uparrow_{\boldsymbol{\kappa}}(p)\in\IG\).
\end{thm}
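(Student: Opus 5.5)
The proof goes through the hyperbolic-polynomial characterization and reduces polarization to a sequence of elementary operations, each of which is shown to preserve $\Ga$ (resp. $\IG$). The key observation is that $\Pi^\uparrow_{\boldsymbol\kappa}(p)$ is obtained from $p$ by polarizing one variable at a time. Polarizing $x_i$ to degree $\kappa_i$ is the same as first passing to the $\kappa_i$ symmetric variables $x_{i1},\dots,x_{i\kappa_i}$ and then applying the Grace–Walsh–Szeg\H{o}-type symmetrization. By induction on $\sum_i\kappa_i$, it therefore suffices to treat a single step: replace $x_1$ (of degree at most $\kappa_1=k$) by the multi-affine symmetric extension in $k$ new variables, keeping the others fixed.

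A single step can be further factored through the operator $x_1\mapsto(y,x_1)$ that splits one variable into two. Concretely, the step
\[
 q(x_1,\dots)=\sum_j a_j(\dots)x_1^j\ \mapsto\ \tilde q(y,x_1,\dots)=\sum_j a_j\Big(\tfrac{j}{k}y\,x_1^{j-1}+\tfrac{k-j}{k}x_1^{j}\Big)
\]
lowers the degree in $x_1$ by one and adds one affine variable. Its restriction to the diagonal $y=x_1$ is $q$, and its $y$-coefficient is $\frac1k\partial_{x_1}q$. Iterating this $k-1$ times and then symmetrizing yields $\Pi^\uparrow$. The core claim is then:
\begin{quote}
if $q\in\Ga$ with NRT component, then $\tilde q\in\Ga$, with component
\[
 \Upsilon_{\tilde q}=\{(y,x):\ q(x)>0,\ x\in\Upsilon_q,\ \text{and } q+\tfrac{(y-x_1)}{k}\partial_1 q\text{ stays in the PRT component}\}.
\]
\end{quote}
To prove it, I verify conditions (1) and (2) of Definition~\ref{FMGdefinition}. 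The derivatives of $\tilde q$ are again of the same split form applied to derivatives of $q$; the derivative in $y$ is $\frac1k\partial_1 q$. So the condition on all $\partial^\alpha$ reduces to the statement for $q$ and its derivatives, which holds by the hypothesis $p\in\Ga$ applied to the whole family $\partial^\alpha p$.

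Condition (1) should follow from two facts: $\tilde q$ is affine in $y$, and along the fibre $y\mapsto \tilde q$ the slope $\partial_1 q$ is positive on $\Upsilon_q\subseteq\Upsilon_{\partial_1q}$. The positive set over each $x\in\Upsilon_{\partial_1 q}$ is then a half-line $y>y_0(x)$. The component is the union of these half-lines over a connected base, which is PRT because increasing $x$ keeps one in $\Upsilon_{\partial_1 q}$. Here NRT is needed to show that the base is exactly the correct connected region and that no other PRT component appears. Without NRT, a component of $\{\tilde q>0\}$ could escape in the direction $x_1\to-\infty$, $y\to+\infty$, as in the counterexamples of \cite{FangMaGarding}.

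For the ideal case, convexity of $\Upsilon_{\tilde q}$ follows from a concavity statement. On $\Upsilon_{\partial_1 q}$ the threshold $y_0(x)=x_1-kq/\partial_1q$ must be convex. This amounts to concavity of $q/\partial_1 q$ on the convex component, which for ideal Fang–Ma–G\aa rding polynomials I expect to derive from the ideal-component convexity (a G\aa rding-type inequality). This is also why NRT is not needed in the $\IG$ statement: convexity of all derivative components already rules out escaping components.

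The main obstacle is the uniqueness-of-PRT-component part of (1) for $\tilde q$ and for every derivative simultaneously, since the NRT hypothesis must be propagated to all $\partial^\alpha q$ and to $\tilde q$ itself along the induction. I would first try to prove that NRT of $\Upsilon_q$ implies NRT of $\Upsilon_{\tilde q}$ directly from the half-line description. The final symmetrization step (averaging over permutations of the $k$ new variables) preserves everything because the construction is already symmetric.
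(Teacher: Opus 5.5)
The paper does not prove this statement; it quotes it from Fang--Ma. Your sketch therefore has to stand on its own, and as written it does not.

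The reduction itself is sound. The map $\tilde q=q+\frac{y-x_1}{k}\partial_1q$ is $\Pi^\uparrow$ restricted to $x_{11}=\cdots=x_{1,k-1}=x_1$, $x_{1k}=y$, and iterating it produces the full polarization. But the claim $\tilde q\in\Ga$ carries the entire theorem, and what you offer for it is partly wrong and partly aimed at the wrong difficulty.

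\textbf{The description of the component.} Your displayed description of $\Upsilon_{\tilde q}$ is false. For $q=x_1^2-1$ and $k=2$ one gets $\tilde q=yx_1-1$. Its PRT component $\{x_1>0,\ yx_1>1\}$ contains $(y,x_1)=(10,\tfrac12)$, although $\tfrac12\notin\Upsilon_q=(1,\infty)$ and $q(\tfrac12)<0$. The right candidate is your half-line set $U=\{(y,x):x\in\Upsilon_{\partial_1q},\ y>y_0(x)\}$ with $y_0=x_1-kq/\partial_1q$.

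\textbf{Uniqueness is not the obstacle.} It uses no NRT. If $W$ is a PRT component of $\{\tilde q>0\}$ and $(y,x)\in W$, then $(y+t,x+t\mathbf 1)\in W\cap U$ for large $t$. Since $U\cong\Upsilon_{\partial_1q}\times(0,\infty)$ is connected, every PRT component contains $U$, so there is at most one. Likewise, by Lemma~\ref{NRT-redundant}, NRT only says that every variable occurs. Splitting an absent variable gives $\tilde q=q$, so the ``escaping component'' mechanism you describe does not arise.

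\textbf{The real gap is existence.} You need $U$ to be PRT and to be a whole component. The remark ``increasing $x$ keeps one in $\Upsilon_{\partial_1q}$'' does not give this, because PRT of the base says nothing about the threshold. $U$ is PRT exactly when $y_0$ is nonincreasing in each coordinate on $\Upsilon_{\partial_1q}$, that is,
\[
(k-1)(\partial_1q)^2\ge k\,q\,\partial_1^2q
\quad\text{and}\quad
\partial_1q\,\partial_iq\ge q\,\partial_1\partial_iq\ (i\ne1)
\qquad\text{on }\Upsilon_{\partial_1q}.
\]
The first is a Laguerre--Newton inequality in the $x_1$-direction. The second is a non-multi-affine analogue of Theorem~\ref{PositiveDerivative}(3) for the pair $(\partial_1q,q)$. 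You also need $q\le0$ on $\partial\Upsilon_{\partial_1q}$, so that $U$ is relatively closed in $\{\tilde q>0\}$.

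\textbf{Condition (2) hinges on the same inequalities.} One has $\partial_{x_1}\tilde q=\frac{k-1}{k}\cdot(\text{split of }\partial_1q\text{ with }k-1)$ and $\partial_{x_i}\tilde q=(\text{split of }\partial_iq)$. Comparing thresholds shows that their candidate components contain $U$ if and only if the first, respectively the second, inequality holds. So the sentence that condition (2) ``reduces to the statement for $q$ and its derivatives'' hides the whole proof. Knowing that each $\partial^\alpha q\in\Ga$ separately gives no inclusion between components of different split polynomials.

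\textbf{The ideal case.} Convexity of $U$ is equivalent to concavity of $q/\partial_1q$ on $\Upsilon_{\partial_1q}$, and similarly for all derivatives. You only say you ``expect'' this, but it is the ideal statement itself, not a step toward it.

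Until these inequalities are derived from the Fang--Ma--G{\aa}rding axioms, the proposal is a reformulation of the theorem rather than a proof.
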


Fang and Ma have provided the following characterization of NRT polynomials:

\begin{lem}[Lemma 9.5 of \cite{FangMaGarding}]
\label{NRT-redundant}
Let $f\in \Ga_n$. Then the Fang--Ma--G{\aa}rding component $\Upsilon_f$ is NRT if and only if $\deg_{x_i} f\ge 1$ for every $i=1,2,\ldots,n$.
\end{lem}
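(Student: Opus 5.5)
We prove the two implications of Lemma~\ref{NRT-redundant} separately, working with $\Upsilon_f$ as given in Definition~\ref{FMGdefinition}.

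\textbf{Necessity.} Suppose $\deg_{x_i} f=0$ for some $i$, so that $f$ does not depend on $x_i$. Then $\{f>0\}$ is invariant under translation in the $e_i$ direction, and so is each of its connected components, in particular $\Upsilon_f$. Taking $x\in\Upsilon_f$, the set $(\{x\}-\overline{\Gamma_n^+})\cap\Upsilon_f$ contains the whole ray $\{x-te_i: t\ge0\}$, which is unbounded. Hence $\Upsilon_f$ is not NRT.

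\textbf{Sufficiency.} Assume $\deg_{x_i}f=d_i\ge1$ for every $i$, fix $x\in\Upsilon_f$, and suppose for contradiction that there are points $y_k=x-v_k\in\Upsilon_f$ with $v_k\in\overline{\Gamma_n^+}$ and $|v_k|\to\infty$. The plan has three steps.

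\begin{enumerate}
\item \emph{Reduction to segments.} By PRT, for every $w$ with $0\le w\le v_k$ componentwise we have $x-w=(x-v_k)+(v_k-w)\in\Upsilon_f$. So the whole box between $y_k$ and $x$ lies in $\Upsilon_f$. Passing to a subsequence, some coordinate, say $i=1$, satisfies $(v_k)_1\to\infty$. The box then contains the segment $\{x-te_1: 0\le t\le (v_k)_1\}$. It follows that the ray $x-te_1$, $t\ge0$, lies in $\Upsilon_f\subseteq\{f>0\}$.
\item \emph{Sign of the leading coefficient.} Let $\beta=(d_1,0,\dots,0)$. Then $\partial^{\beta}f=\partial_1^{d_1}f$ is nonzero and independent of $x_1$, and equals $d_1!$ times the $x_1^{d_1}$-coefficient of $f$. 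Iterating property~2 of Definition~\ref{FMGdefinition} gives $\Upsilon_f\subseteq\Upsilon_{\partial_1^{d_1}f}$. Hence the leading coefficient $a(x')$ (with $x'=(x_2,\dots,x_n)$) satisfies $a(x')>0$ at our point $x$.
\item \emph{Contradiction via a lowest-order component.} We want to show that $f(x-te_1)$, a polynomial in $t$ of degree $d_1$ with leading term $(-1)^{d_1}a(x')t^{d_1}$, becomes negative for large $t$. When $d_1$ is odd this is immediate, since the leading term is eventually negative. For general $d_1$, descend instead through the chain $\Upsilon_f\subseteq\Upsilon_{\partial_1 f}\subseteq\cdots\subseteq\Upsilon_{\partial_1^{d_1-1}f}$. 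The last polynomial, $\partial_1^{d_1-1}f$, is affine in $x_1$ with positive slope $d_1!\,a(x')$ on this region. Along $x-te_1$ it is therefore eventually negative, while the ray lies in $\Upsilon_f\subseteq\Upsilon_{\partial_1^{d_1-1}f}\subseteq\{\partial_1^{d_1-1}f>0\}$. This is the desired contradiction.
\end{enumerate}

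\textbf{Main obstacle.} The key difficulty is making sure the slope $a(x')$ stays positive along the whole ray. Since $a$ depends only on $x'$, which is constant along $x-te_1$, this holds automatically. The one point to check carefully is that the inclusion chain in property~2 applies to iterated derivatives $\partial^\alpha f$, which Definition~\ref{FMGdefinition} grants for every $\alpha$ with $\partial^\alpha f\not\equiv0$.
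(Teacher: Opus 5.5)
Your proof is correct and complete. The paper does not prove this lemma: it quotes it from Lemma 9.5 of \cite{FangMaGarding} without argument, so there is no in-paper route to compare against. Your argument is self-contained and uses only the two defining properties in Definition~\ref{FMGdefinition}, namely PRT of $\Upsilon_f$ and the inclusions $\Upsilon_{\partial^\alpha f}\subseteq\Upsilon_{\partial_i\partial^\alpha f}$.

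Two small points deserve a sentence each when you write it up.

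\emph{Necessity.} The component $\Upsilon_f$ is invariant under $x\mapsto x+se_i$ because the whole segment from $x$ to $x+se_i$ lies in $\{f>0\}$. Hence both endpoints lie in the same component.

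\emph{Step 2.} The inclusion $\Upsilon_{\partial_1^{d_1}f}\subseteq\{\partial_1^{d_1}f>0\}$ also covers the case where $\partial_1^{d_1}f$ is constant. A nonzero constant derivative must be positive, since otherwise $\{\partial_1^{d_1}f>0\}$ is empty and condition (1) of the definition fails.

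Incidentally, once the ray $x-te_1$ lies in $\Upsilon_{\partial_1^{d_1-1}f}$, PRT of that component puts the whole line $x+\RR e_1$ inside it. So Step 3 really only needs $a(x')\neq 0$, not its sign. Step 2 is what guarantees this nonvanishing at your chosen point.
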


We next recall the definition introduced by Lin.

\begin{defn}[Definition 2.1 of \cite{LinInverseSigmaConvexity}]
\label{right-Noetherian}
For a nonconstant univariate polynomial \(f\), let \(r(f)\) denote its
largest real root, provided such a root exists.  If
\(\deg f=d\), its \emph{root sequence} is
\[
  R(f)=\bigl(r(f),r(f'),\ldots,r(f^{(d-1)})\bigr),
\]
provided all entries exist.  The polynomial \(f\) is
\emph{right-Noetherian} if all these entries exist
and
\[
  r(f)\ge r(f')\ge\cdots\ge r(f^{(d-1)}).
\]

The polynomial \(f\) with degree $d\ge 2$ is
\emph{strictly right-Noetherian} if all these entries exist
and
\[
  r(f)> r(f')\ge\cdots\ge r(f^{(d-1)}).
\]

The polynomial \(f\) with degree $d\ge 2$ is
\emph{strongly strictly right-Noetherian} if all these entries exist
and
\[
  r(f)> r(f') >\cdots > r(f^{(d-1)}).
\]

\end{defn}

The following is immediate from Section~9.2 of \cite{FangMaGarding} and the fact that any interval is convex.
\begin{lem}[Section 9.2 of \cite{FangMaGarding}]
\label{FangMaLinCorrespondence}
A monic non-constant univariate polynomial $p(x)$ belongs to $\Ga_1=\IG_1$ if and only if it is right-Noetherian.
\end{lem}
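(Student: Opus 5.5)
The plan is to compute the Fang--Ma--G{\aa}rding data of a monic univariate polynomial $p$ of degree $d\ge1$ by hand, and then compare the resulting conditions with Definition~\ref{right-Noetherian}. In one variable the derivatives $\partial^\alpha p$ are just $p,p',\ldots,p^{(d)}$. Here $p^{(d)}=d!$ is a positive constant, so $\Upsilon_{p^{(d)}}=\RR$. A subset of $\RR$ passes PRT exactly when it is closed under adding nonnegative numbers. So an open connected component of $\{q>0\}$ passes PRT if and only if it is an unbounded interval $(a,\infty)$.

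First I record a fact about any monic $q=p^{(k)}$ with $k<d$, which has positive leading coefficient. The set $\{q>0\}$ always contains a right-unbounded interval. If $q$ has a real root, this interval is $(r(q),\infty)$, where $r(q)$ is the largest real root; otherwise $\{q>0\}=\RR$. Every other component of $\{q>0\}$ is bounded. Hence condition~1 of Definition~\ref{FMGdefinition} holds automatically, and
\[
\Upsilon_{p^{(k)}}=(r(p^{(k)}),\infty),
\]
with the convention $r=-\infty$ when there is no real root. Each such set is an interval, hence convex, which gives $\Ga_1=\IG_1$ directly.

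Next I translate condition~2. The nesting $\Upsilon_{p^{(k)}}\subseteq\Upsilon_{p^{(k+1)}}$ is exactly $r(p^{(k)})\ge r(p^{(k+1)})$, with $-\infty$ allowed. For $k=d-1$, both the condition and the comparison with $\Upsilon_{p^{(d)}}=\RR$ are vacuous.

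The remaining issue is reconciling this with the requirement in Definition~\ref{right-Noetherian} that all roots exist. The polynomial $p^{(d-1)}$ is linear, so it has a real root. If the chain $r(p^{(k)})\ge r(p^{(k+1)})$ holds for all $k$, a downward induction shows every $r(p^{(k)})$ is finite. Indeed, if $p^{(k)}$ had no real root, then $\Upsilon_{p^{(k)}}=\RR$, which is not contained in $\Upsilon_{p^{(k+1)}}=(r(p^{(k+1)}),\infty)$ once $r(p^{(k+1)})$ is finite. Therefore $p\in\Ga_1$ if and only if all roots exist and $r(p)\ge r(p')\ge\cdots\ge r(p^{(d-1)})$, which is the right-Noetherian condition.

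The only delicate point is this bookkeeping of the no-root case and the convention $\Upsilon_c=\RR$ for positive constants. There is no real obstacle.
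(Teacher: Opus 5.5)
Your argument is correct, and it is more self-contained than the paper's. The paper gives no computation: it defers the equivalence entirely to Section 9.2 of \cite{FangMaGarding}. It then justifies $\Ga_1=\IG_1$ by the same remark you make, namely that every Fang--Ma--G{\aa}rding component in one variable is an interval and hence convex. Your direct unpacking of Definition~\ref{FMGdefinition} is a self-contained version of that citation. It has three steps: each component is $(r(p^{(k)}),\infty)$; the nesting condition is equivalent to $r(p^{(k)})\ge r(p^{(k+1)})$; and downward induction from the linear $p^{(d-1)}$ forces every root to exist. The paper's route buys brevity, and yours avoids relying on the reader to extract the statement from Fang--Ma.

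Two small points should be tightened. First, you claim the components of $\{q>0\}$ other than the right-unbounded one are bounded. That is false: $q=x^2-1$ has the component $(-\infty,-1)$. It is harmless, because such a component is not of the form $(a,\infty)$ and so fails PRT, which is all your uniqueness claim needs. Second, the equality $\Ga_1=\IG_1$ is a statement about all univariate elements of $\Ga_1$, not only monic ones. Your convexity remark covers this, since any connected subset of $\RR$ is an interval, but phrase it for arbitrary $p\in\Ga_1$ rather than only for the monic $p$ under consideration.
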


Next, let us recall the definition of proper position from \cite{FangMaGarding} and ideal proper position from \cite{FangMaIdealGarding}.

\begin{defn}[Definition 10.9 of \cite{FangMaGarding}, Definition 9.4 of \cite{FangMaIdealGarding}]
Suppose $f_2(x), f_1(x) \in \Ga[x] \setminus \{0\}$. The pair $(f_2,f_1)$ is said to be \emph{in proper position}, written as $f_2 \prec f_1$, if
 \[f_2(x)y +f_1(x) \in \Ga[x,y].\]
 Suppose $f_2(x), f_1(x) \in \IG[x] \setminus \{0\}$. The pair $(f_2,f_1)$ is said to be \emph{in ideal proper position}, written as $f_2\idealprec f_1$, if
 \[f_2(x)y +f_1(x) \in \IG[x,y].\]
\end{defn}

Using this definition and Theorem \ref{kappa-polarization}, it is almost trivial to prove the following corollary. However, since it is not explicitly stated in either \cite{FangMaGarding} or \cite{FangMaIdealGarding}, we write it out here for completeness:

        \begin{cor}
        \label{polarizationproperposition}
        Suppose that $f_2, f_1$ are univariate monic polynomials of degree $1\le d\le n$. If $f_2, f_1\in \Ga_1$ and $f_2\prec f_1$, then $\Pol_n f_2 \prec \Pol_n f_1$. If $f_2, f_1\in \IG_1$ and $f_2\idealprec f_1$, then $\Pol_n f_2 \idealprec \Pol_n f_1$.
        \end{cor}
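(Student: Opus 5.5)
The plan is to apply Theorem~\ref{kappa-polarization} to the bivariate polynomial $p(x,y)=f_2(x)y+f_1(x)$ with multidegree bound $\kappa=(n,1)$. By definition of proper position, $f_2\prec f_1$ means $p\in\Ga[x,y]$, i.e. $p\in\Ga_2$. Since $f_2,f_1$ have degree $d\le n$, we have $\deg_x p\le n$ and $\deg_y p\le 1$, so $p$ has multidegree at most $\kappa$. By the example following the definition of $\kappa$-polarization, $\Pi^\uparrow_\kappa(p)=g_2(x_1,\ldots,x_n)y+g_1(x_1,\ldots,x_n)$ with $g_i=\Pol_n f_i$. Here we have to be slightly careful: the example is stated with the $\binom nk$ normalization, so we write $f_i(x)=\sum_k c_k^{(i)}\binom nk x^k$ with $c_k^{(i)}=0$ for $k>d$. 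Each monomial $x^ky$ or $x^k$ then polarizes to $\sigma_k(x)y/\binom nk$ or $\sigma_k(x)/\binom nk$, which matches $\Pol_n$.

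The key step is verifying the NRT hypothesis. By Lemma~\ref{NRT-redundant} applied to $p\in\Ga_2$, $\Upsilon_p$ is NRT iff $\deg_x p\ge1$ and $\deg_y p\ge1$. We have $\deg_y p=1$ because $f_2\neq0$. Also $\deg_x p\ge d\ge1$, since the coefficient of $x^d y$ is the leading coefficient of the monic $f_2$, which is nonzero. Hence Theorem~\ref{kappa-polarization} gives $g_2 y+g_1\in\Ga_{n+1}$, which we read as $g_2(x)y+g_1(x)\in\Ga[x_1,\ldots,x_n,y]$.

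We must also check that $\Pol_n f_2$ and $\Pol_n f_1$ lie in $\Ga$, as the definition of $\prec$ requires. These are derivatives of $q=g_2y+g_1$: $g_2=\partial_y q$, and $g_1=q|_{y=0}$. For $g_2$, being a partial derivative of a Fang-Ma-G{\aa}rding polynomial, it is Fang-Ma-G{\aa}rding directly from the recursive definition, since conditions 1 and 2 for all $\partial^\alpha g_2$ are among those for $q$. Alternatively, and more uniformly, we polarize $f_i\in\Ga_1$ directly: $\Upsilon_{f_i}$ is NRT by Lemma~\ref{NRT-redundant} since $\deg f_i\ge1$, so Theorem~\ref{kappa-polarization} with $\kappa=(n)$ gives $\Pol_n f_i\in\Ga_n$. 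This avoids any restriction argument for $g_1$. Then $\Pol_n f_2\prec\Pol_n f_1$ holds by definition.

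The ideal case is identical but simpler, because Theorem~\ref{kappa-polarization} for $\IG$ needs no NRT hypothesis. The assumption $f_2\idealprec f_1$ gives $p\in\IG$, so $\Pi^\uparrow_\kappa(p)=g_2y+g_1\in\IG$. Likewise $f_i\in\IG_1$ gives $\Pol_n f_i\in\IG_n$, and hence $\Pol_n f_2\idealprec\Pol_n f_1$.

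The only mild obstacle is bookkeeping. We must match the $\binom nk$ normalization of $\kappa$-polarization with $\Pol_n$, and confirm that $\Ga[x,y]$ membership is the same as membership in $\Ga_2$ in the variable ordering $(x,y)$ used for $\kappa=(n,1)$.
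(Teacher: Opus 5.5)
Your proposal is correct and follows essentially the same route as the paper: apply Theorem~\ref{kappa-polarization} with $\kappa=(n,1)$ to $f_2(x)y+f_1(x)$, using Lemma~\ref{NRT-redundant} to supply the NRT hypothesis in the non-ideal case, and note that the ideal case needs no NRT check. Your extra bookkeeping, namely matching the $\binom nk$ normalization and separately verifying that each $\Pol_n f_i$ lies in $\Ga_n$ (resp.\ $\IG_n$) so that the definition of proper position applies, is correct. It simply makes explicit what the paper's proof leaves implicit.
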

        \begin{proof}
        If $f_2\prec f_1$, by definition, $f_2(x)y+f_1(x)\in \Ga[x,y]$. By Lemma \ref{NRT-redundant}, its Fang-Ma-G{\aa}rding component is NRT. By Theorem \ref{kappa-polarization}, if we choose $\kappa=(n,1)$, then the $\kappa$-polarization 
        \[
        g_2(\mathbf{x})y+g_1(\mathbf{x})\in \Ga[\mathbf{x},y],
        \]
        where $g_2=\Pol_n f_2$ and $g_1=\Pol_n f_1$. Thus, $g_2\prec g_1$ by definition. The case $f_2\idealprec f_1$ can be proved similarly.
        \end{proof}
        
    Recall Theorem 6.14 of \cite{FangMaGarding}.
    
    \begin{thm}[Theorem 6.14 of \cite{FangMaGarding}]
\label{PositiveDerivative}
Let $g_2$ and $g_1$ be two nontrivial multi-affine Fang-Ma-G{\aa}rding polynomials. Then the following are equivalent:
\begin{enumerate}
    \item $g_2\prec g_1$;
    \item $\Upsilon_{g_2} \cap\{g_1>0\}=\Upsilon_{g_1}$;
    \item For every $i=1,2,\ldots,n$, $g_1\partial_i g_2-g_2\partial_i g_1 \le 0$ on $\Upsilon_{g_2}$.
\end{enumerate}
\end{thm}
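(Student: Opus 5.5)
The plan is to prove the cycle $(1)\Rightarrow(2)\Rightarrow(3)\Rightarrow(1)$, treating $\Upsilon_{g_2}$ and $\Upsilon_{g_1}$ as the relevant components throughout. The main tools are the bivariate polynomial $P(x,y)=g_2(x)y+g_1(x)$, which lies in $\Ga$ exactly when $g_2\prec g_1$, and the following observation. Because $g_1,g_2$ are multi-affine, we have $P_y=g_2$, while $\partial_i P=\partial_i g_2\,y+\partial_i g_1$ is affine in $y$. The only nonconstant $y$-dependence is therefore of the simplest kind, so all the structure can be read off from the slices $\{P>0\}\cap\{x\}\times\RR$.

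\textbf{Step 1: $(1)\Rightarrow(2)$.} Assume $P\in\Ga$. Its component $\Upsilon_P$ passes PRT, and by Definition~\ref{FMGdefinition} it satisfies $\Upsilon_P\subseteq\Upsilon_{P_y}=\Upsilon_{g_2}\times\RR$. Hence for $x\in\Upsilon_{g_2}$ the slice of $\{P>0\}$ is the half-line $y>-g_1/g_2$. One then argues that $\Upsilon_P=\{(x,y):x\in\Upsilon_{g_2},\ y>-g_1(x)/g_2(x)\}$, using PRT in $y$ and connectedness of $\Upsilon_{g_2}$.

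Setting $y=0$, the set $\Upsilon_{g_2}\cap\{g_1>0\}$ equals $\{x:(x,0)\in\Upsilon_P\}$. Restricting to $y=0$ is the same as evaluating $\partial^\alpha P$ with no $y$-derivative. The inclusion chain for $P$ then shows that $\{x:(x,0)\in\Upsilon_P\}$ is a connected PRT component of $\{g_1>0\}$. This needs care: connectedness follows because the slice description makes $\Upsilon_P\cap\{y=0\}$ a deformation retract along the $y$-direction of a connected set, using PRT in the $x$-directions. By uniqueness of the PRT component of $g_1$, this set is $\Upsilon_{g_1}$, which gives (2).

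\textbf{Step 2: $(2)\Rightarrow(3)$.} On $\Upsilon_{g_2}$ we have $g_2>0$. Condition (3) says $\partial_i(g_1/g_2)\le 0$, i.e.\ $h:=-g_1/g_2$ is nondecreasing in each coordinate on $\Upsilon_{g_2}$.

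Suppose instead that $\partial_i h<0$ at some $x_0\in\Upsilon_{g_2}$. Consider the translated polynomial $g_1+c\,g_2$ with $c=h(x_0)$. It is still multi-affine, and it is Fang--Ma--G{\aa}rding because $P(x,c)$ is a restriction of $P$ along a PRT-compatible direction. Its zero set passes through $x_0$, and moving in $+e_i$ makes it positive, whereas moving in $-e_i$ makes it negative.

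I would try to contradict (2) applied to $g_1+cg_2$, which is in proper position with $g_2$ whenever $g_1$ is, since adding $c\,g_2$ is a shear $y\mapsto y+c$ of $P$. The key fact to use is that $\Upsilon_{g_1+cg_2}$ passes PRT. The points $x_0+te_i$ for small $t>0$ lie in $\{g_1+cg_2>0\}\cap\Upsilon_{g_2}$, hence in $\Upsilon_{g_1+cg_2}$ by (2). Then PRT forces $x_0-te_i$ to have positive value somewhere along a suitable path.

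This is delicate. The cleaner route I would actually attempt is the multi-affine identity $g_1\partial_ig_2-g_2\partial_ig_1=-\bigl(\partial_i g_1\,\partial_i g_2$-type Wronskian$\bigr)$ written via $g=a+bx_i$. This reduces (3) to the real-rootedness/interlacing of the univariate restrictions $t\mapsto g_j(x+te_i)$, which are affine, so the Wronskian is constant in $t$. I expect this step to be the main obstacle.

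\textbf{Step 3: $(3)\Rightarrow(1)$.} Given the monotonicity of $h$ on $\Upsilon_{g_2}$, define $U=\{(x,y):x\in\Upsilon_{g_2},\ y>h(x)\}$. This set is connected, and it passes PRT precisely because $h$ is nondecreasing in each $x_i$ and $y\mapsto y$ is increasing. It is a component of $\{P>0\}$, since on $\partial\Upsilon_{g_2}$ we have $g_2=0$ and hence $P=g_1$.

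It remains to verify condition 2 of Definition~\ref{FMGdefinition} for all derivatives $\partial^\alpha P$. If $\alpha$ involves $y$, then $\partial^\alpha P$ is a derivative of $g_2\in\Ga$ and the conditions are inherited. If $\alpha$ does not involve $y$, then $\partial^\alpha P=\partial^\alpha g_2\,y+\partial^\alpha g_1$. Here I would show that (3) passes to the pair $(\partial^\alpha g_2,\partial^\alpha g_1)$; for multi-affine polynomials this follows from setting coordinates to infinity, i.e.\ taking leading coefficients in $x_i$, and a limiting argument. The same construction then applies inductively on $|\alpha|$.
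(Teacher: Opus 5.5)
The paper gives no proof of this statement; it is imported from Fang--Ma. Your proposal therefore has to stand on its own, and it has a genuine gap: the implication $(2)\Rightarrow(3)$ is never proved. Your first route needs $g_1+cg_2\in\Ga$ and $g_2\prec g_1+cg_2$. These are consequences of (1), not of (2), so the argument is circular. The second route is only named. The missing argument is short.

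Write $g_j=a_j+b_jx_i$ with $a_j,b_j$ independent of $x_i$. Then $W_i:=g_1\partial_ig_2-g_2\partial_ig_1=a_1b_2-a_2b_1$ is constant on each line $\ell(t)=x_0+te_i$. Suppose $W_i(x_0)>0$ for some $x_0\in\Upsilon_{g_2}$.

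If $\partial_ig_2\not\equiv0$, then $b_2=\partial_ig_2(x_0)>0$. So $g_2(\ell(t))>0$ exactly for $t>t_0$ with $t_0<0$, and this ray lies in $\Upsilon_{g_2}$. At $x_1=\ell(t_0)$ one gets $g_1(x_1)=W_i/b_2>0$. Hence $\ell(t)\in\Upsilon_{g_2}\cap\{g_1>0\}=\Upsilon_{g_1}$ for $t$ slightly larger than $t_0$. Then $x_1\in\overline{\Upsilon_{g_1}}\cap\{g_1>0\}=\Upsilon_{g_1}\subseteq\Upsilon_{g_2}$, contradicting $g_2(x_1)=0$.

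If $\partial_ig_2\equiv0$, then $W_i=-g_2\partial_ig_1>0$ forces $b_1<0$. The whole line lies in $\Upsilon_{g_2}$, and $\ell(t)\in\Upsilon_{g_1}$ for $t\ll0$ by (2). PRT of $\Upsilon_{g_1}$ then contradicts $g_1(\ell(t))\to-\infty$ as $t\to+\infty$.

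There is also a sign error running through Steps 2 and 3. Since $g_1\partial_ig_2-g_2\partial_ig_1=-g_2^2\,\partial_i(g_1/g_2)$, condition (3) says $\partial_i(g_1/g_2)\ge0$. This is how the paper uses the theorem at the end of the proof of Proposition~\ref{Strict proper position}. So $h=-g_1/g_2$ is \emph{nonincreasing}. That, not the monotonicity you state, is what makes $U=\{(x,y):x\in\Upsilon_{g_2},\ y>h(x)\}$ pass PRT. Likewise, your contradiction hypothesis $\partial_ih<0$ in Step 2 is exactly the case where (3) holds.

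Beyond that, Step 3 leaves three things unproved.
\begin{enumerate}
\item Saying ``$P=g_1$ on $\partial\Upsilon_{g_2}$'' does not make $U$ closed in $\{P>0\}$. You need $g_1\le0$ on $\partial\Upsilon_{g_2}$. This follows from (3): $s\mapsto(g_1/g_2)(x+s\mathbf 1)$ is nondecreasing for $s>0$, but would tend to $+\infty$ as $s\downarrow0$ if $g_1(x)>0$.
\item You never check the inclusions $\Upsilon_{\partial^\alpha P}\subseteq\Upsilon_{\partial_j\partial^\alpha P}$ in the $x_j$-directions. With the epigraph description these are exactly (3) for the pair $(\partial^\alpha g_2,\partial^\alpha g_1)$, since $\partial_j\partial^\alpha g_2>0$ there. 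Your descent of (3) to derivative pairs by leading coefficients is sound. However, it uses $x+te_i\in\Upsilon_{g_2}$ for $t\gg0$ whenever $x\in\Upsilon_{\partial_ig_2}$, i.e.\ $\Upsilon_{g_2}=\Upsilon_{\partial_ig_2}\cap\{g_2>0\}$, and this needs its own epigraph-type proof.
\item If $\partial_j\partial^\alpha g_2\equiv0\not\equiv\partial_j\partial^\alpha g_1$, condition (3) only gives $\partial_j\partial^\alpha g_1\ge0$ on $\Upsilon_{\partial^\alpha g_2}$. You must upgrade this to $>0$, for instance because a multi-affine polynomial that is $\ge0$ on an open set and vanishes at a point of it is identically zero.
\end{enumerate}
By contrast, in Step 1 connectedness of the slice needs no deformation argument: every PRT set is path connected through $\max(x,x')$.
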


Let us also recall the definition of the dominance relationship by \cite{FangMaGarding}, which extends Lin's $\Upsilon$-dominance relationship in \cite{LinInverseSigmaConvexity}.

\begin{defn}[Definition 10.8 of \cite{FangMaGarding}]\label{Dominance}
Let \(g_1, g_2 \in \Ga\). If
  \[
\Upsilon_{\partial^\alpha g_1} \subset \{\partial^\alpha g_2 > 0\} \quad \text{for every nontrivial partial derivative } \partial^\alpha g_2,
\]
then we say that \(g_1\) \textit{dominates} \(g_2\), and write \(g_2 \triangleleft g_1\). This is also equivalent to $\Upsilon_{\partial^\alpha g_1}\subset\Upsilon_{\partial^\alpha g_2}$. Note that \(g_2 \triangleleft g_1\) implies \(\deg g_1 \ge \deg g_2\).
\end{defn}

Then we have the following results:
        
\begin{thm}[Theorem 10.10 of \cite{FangMaGarding}]
\label{DominancePreserve}
Suppose that $f_2, f_1$ are univariate monic polynomials of degree $1\le d_2\le d_1\le n$. If $f_2, f_1\in \Ga_1$ and $f_2\triangleleft f_1$, then $\Pol_n f_2 \triangleleft \Pol_n f_1$.
\end{thm}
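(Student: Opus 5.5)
The plan is to reduce the multivariate dominance statement to the polarization theorem, Theorem \ref{kappa-polarization}, in the same way Corollary \ref{polarizationproperposition} was obtained, working one derivative at a time. Put $g_i=\Pol_n f_i$. First I record that $g_1,g_2\in\Ga_n$. By Lemma \ref{FangMaLinCorrespondence} each $f_i$ is right-Noetherian, and since $\deg f_i\ge1$, Lemma \ref{NRT-redundant} shows that $\Upsilon_{f_i}$ is NRT. Applying Theorem \ref{kappa-polarization} with $\kappa=(n)$ then gives $g_i\in\Ga_n$.

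Next I describe the derivatives. Every $\partial^\alpha g_i$ is multi-affine, so it vanishes unless $\alpha$ is a $0/1$ vector supported on some set $S$ with $|S|=m$. In that case
\[
\partial^\alpha g_i=\sum_k c^{(i)}_k\,\sigma_{k-m}(x_{S^c}),
\]
and this equals $\frac{(n-m)!}{n!}\Pol_{n-m}(f_i^{(m)})$ in the variables $x_{S^c}$. This follows from the identity $\binom nk\binom km/\binom{n-m}{k-m}\cdot\frac{(n-m)!}{n!}\cdot\frac{k!}{(k-m)!}\cdot\ldots$, and only the routine coefficient check remains. The formula shows that $\partial^\alpha g_i$ depends only on $x_{S^c}$. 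It also shows that its Fang-Ma-G{\aa}rding component is a cylinder $\Upsilon'\times\RR^{S}$, where $\Upsilon'$ is the component of the polarization of $f_i^{(m)}$ in $n-m$ variables. When $n-m=0$ the derivative is a constant, and it is positive because the leading coefficient is positive.

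With this, the claim $\Upsilon_{\partial^\alpha g_1}\subset\Upsilon_{\partial^\alpha g_2}$ reduces to showing $\Upsilon_{\Pol_{n-m}f_1^{(m)}}\subset\Upsilon_{\Pol_{n-m}f_2^{(m)}}$ for every $m$ with $f_2^{(m)}\not\equiv0$. The hypothesis $f_2\triangleleft f_1$ is preserved under differentiation in the univariate sense, since it is a condition on all derivatives. The key step is therefore the $\alpha=0$ case: for right-Noetherian monic $h_2\triangleleft h_1$, show $\Upsilon_{\Pol_N h_1}\subset\{\Pol_N h_2>0\}$. Once that is in hand, the connected set $\Upsilon_{\Pol_N h_1}$ passes PRT. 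I then need it to lie inside the PRT component $\Upsilon_{\Pol_N h_2}$, which holds once one checks that it meets that component. This is true at points $t(1,\dots,1)$ with $t$ large, because there $\Pol_N h_2=h_2(t)>0$ and the positive ray issue is standard.

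For the key step I would use the proper-position machinery. The plan is to show that dominance $h_2\triangleleft h_1$ implies $h_2\prec h_1'$-type interlacing. A cleaner route is to mimic Theorem \ref{PositiveDerivative}: take a point $x\in\Upsilon_{\Pol_Nh_1}$ and move along the diagonal direction $x+s(1,\dots,1)$. Restricted to this line, $\Pol_Nh_i$ is a univariate polynomial $p_i(s)$, and its largest real root lies below all roots of the restriction to any further direction by the G{\aa}rding structure in $\Ga$. I would compare $p_1$ and $p_2$ using the univariate dominance of the derivatives, namely $r(h_1^{(j)})\ge r(h_2^{(j)})$, via a hyperbolicity-type argument on the characterization of $\Upsilon$ through derivatives in Definition \ref{FMGdefinition}. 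I expect this comparison to be the main obstacle, because the components are not defined by hyperbolicity but through nested derivative components. If the direct argument fails, the fallback is to use Fang-Ma's $\kappa$-polarization applied to the two-variable polynomial $h_1(x)+\epsilon h_2(x)$, or to an interpolation, in order to deduce the inclusion from stability of the component under polarization.
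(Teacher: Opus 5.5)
\textbf{There is a genuine gap: the step that carries the content of the theorem is not proved.}

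Your preliminary reduction is sound.
\begin{itemize}
\item $g_i\in\Ga_n$ follows from Lemma~\ref{NRT-redundant} and Theorem~\ref{kappa-polarization}.
\item For $\alpha$ the indicator of $S$ with $|S|=m$, one has $\partial^\alpha g_i=\frac{(n-m)!}{n!}\Pol_{n-m}(f_i^{(m)})(x_{S^c})$. The constant is correct, since $\binom nk\frac{k!}{(k-m)!}\binom{n-m}{k-m}^{-1}=\frac{n!}{(n-m)!}$.
\item The components are cylinders, and univariate dominance passes to derivatives.
\end{itemize}
Run as an induction on the number of variables, this even shows that every $\partial^\alpha\Pol_N h_2$ with $\alpha\neq0$ is positive on $\Upsilon_{\Pol_N h_1}$.

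What is left is $\Pol_N h_2>0$ on $\Upsilon_{\Pol_N h_1}$ for monic right-Noetherian $h_2\triangleleft h_1$. That is the entire content of the theorem, and you do not prove it; you only list strategies that might work. The paper gives no internal argument to compare against here: it imports the result as Theorem 10.10 of Fang--Ma.

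\textbf{Why neither suggested strategy closes the gap.}
\begin{itemize}
\item \emph{Diagonal-line comparison.} Along $x+s\mathbf 1$ with $x$ off the diagonal, the Taylor coefficients of $p_i(s)=\Pol_N h_i(x+s\mathbf 1)$ are $\sum_{|S|=j}\partial_S\Pol_N h_i(x)$. These are values of polarizations of $h_i^{(j)}$ at non-diagonal points, not univariate data such as $r(h_i^{(j)})$. The induction makes the coefficients with $j\ge1$ positive, but that only tells you $p_2$ increases on $[0,\infty)$. It says nothing about the sign of $p_2(0)$, which is exactly the claim.
\item \emph{Root comparison ``by the G{\aa}rding structure''.} This has no basis. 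Members of $\Ga$ need not be hyperbolic: $h=x^3-1$ is right-Noetherian but not real-rooted, so even the diagonal restriction of $\Pol_3 h$ is not real-rooted. Moreover, Definition~\ref{FMGdefinition} relates no roots of two different polynomials.
\item \emph{Fallback via polarization.} $h_1+\epsilon h_2$ is univariate. If you mean $h_2(x)y+h_1(x)$, its membership in $\Ga[x,y]$ is proper position, which dominance does not imply. The degrees may differ. Even in equal degree, $h_2=x^2$ and $h_1=(x-1)(x-3)$ have root sequences $(0,0)\le(3,2)$, so $h_2\triangleleft h_1$. Yet $h_1(0)=3>0$ violates the sign condition of Theorem~\ref{PositiveUnivariable} on $[0,3)$, so $h_2\not\prec h_1$.
\end{itemize}

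What is missing is a genuine comparison of $\Pol_N h_1$ and $\Pol_N h_2$ at off-diagonal points of $\Upsilon_{\Pol_N h_1}$ that uses all the inequalities $r(h_1^{(k)})\ge r(h_2^{(k)})$. Without it, the proposal does not prove the statement.
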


\begin{prop}
[Corollary 10.11 of \cite{FangMaGarding}]\label{Dominance10.11}
Let \(g_2, g_1 \in \Ga_n\) be such that \(g_2 \triangleleft g_1\). Then for every $i=1,2,\ldots,n$, 
\[
\frac{g_1\partial_i g_2-g_2\partial_i g_1}{g_2^2}=-\partial_i(\frac{g_1}{g_2})\le 0
\] on \(\Upsilon_{g_1}\subset \Upsilon_{g_2}\).
\end{prop}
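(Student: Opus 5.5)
The plan is to reduce the inequality to the pointwise comparison of Theorem \ref{PositiveDerivative}. The identity
\[
-\partial_i\Big(\frac{g_1}{g_2}\Big)=\frac{g_1\partial_i g_2-g_2\partial_i g_1}{g_2^2}
\]
is just the quotient rule. It is valid wherever $g_2\neq 0$, and in particular on $\Upsilon_{g_2}$, where $g_2>0$. So the content of the statement is twofold: first, the inclusion $\Upsilon_{g_1}\subset\Upsilon_{g_2}$; second, the sign of the numerator on $\Upsilon_{g_1}$.

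\textbf{Step 1 (inclusion).} Taking $\alpha=0$ in Definition \ref{Dominance}, dominance gives $\Upsilon_{g_1}\subset\{g_2>0\}$. Now $\Upsilon_{g_1}$ is connected and satisfies PRT, so it lies in a single connected component of $\{g_2>0\}$. That component contains $x+t\mathbf 1$ for every $x\in\Upsilon_{g_1}$ and all $t\ge0$. The component $\Upsilon_{g_2}$ also contains all sufficiently large positive multiples of $\mathbf 1$, since $g_2$ is Fang-Ma-G{\aa}rding and its component passes PRT. The ray from $x$ eventually enters this region, so the two components meet and therefore coincide. This gives $\Upsilon_{g_1}\subset\Upsilon_{g_2}$, which is the parenthetical equivalence already recorded in Definition \ref{Dominance}.

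\textbf{Step 2 (sign).} I would show that dominance implies proper position, $g_2\prec g_1$, and then invoke item 3 of Theorem \ref{PositiveDerivative}. That item gives $g_1\partial_ig_2-g_2\partial_ig_1\le0$ on $\Upsilon_{g_2}$, hence on $\Upsilon_{g_1}$ by Step 1. In the multi-affine setting I would verify $g_2\prec g_1$ through item 2 of that theorem. The inclusion $\Upsilon_{g_1}\subset\Upsilon_{g_2}\cap\{g_1>0\}$ is clear from Step 1. The reverse inclusion requires the following: if $x\in\Upsilon_{g_2}$ and $g_1(x)>0$, then $x\in\Upsilon_{g_1}$.

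For the reverse inclusion, I would argue along the ray $x+t\mathbf 1$. Because $\Upsilon_{g_2}$ satisfies PRT, the whole ray stays in $\Upsilon_{g_2}$. For large $t$ it lies in $\Upsilon_{g_1}$. So it suffices to rule out $g_1$ vanishing on the ray, and this is where the dominance of all derivatives, $\Upsilon_{\partial^\alpha g_1}\subset\Upsilon_{\partial^\alpha g_2}$, has to be used. I would try induction on degree. One applies the statement to the pair $(\partial_i g_2,\partial_i g_1)$, which again satisfies dominance, and combines this with the multi-affine structure
\[
g=x_i\,\partial_i g+g|_{x_i=0}.
\]
Together these would show that $g_1/g_2$ is monotone along coordinate directions inside $\Upsilon_{g_2}$. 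The derivative of $g_1/g_2$ with respect to $x_i$ equals $(\partial_ig_1\, g_2|_{x_i=0}-\partial_ig_2\, g_1|_{x_i=0})/g_2^2$, so the sign reduces to an interlacing comparison of the lower-degree pieces.

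\textbf{Main obstacle.} I expect the difficulty to be exactly this last sign comparison, namely turning the purely set-theoretic dominance hypothesis into the inequality $\partial_ig_1\, g_2|_{x_i=0}\ge\partial_ig_2\, g_1|_{x_i=0}$. If the direct induction fails, the fallback is to cite the corresponding result of \cite{FangMaGarding} (Corollary 10.11 is stated there as a consequence of their Theorem 10.10 machinery). This shows that dominance forces $g_2y+g_1$ to be Fang-Ma-G{\aa}rding, and Theorem \ref{PositiveDerivative} then concludes.
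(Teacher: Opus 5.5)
Your Step 1 is fine, but Step 2 rests on the claim that dominance implies proper position, and that claim is false. Worse, the inequality you would then get from item 3 of Theorem \ref{PositiveDerivative}, namely the sign on all of $\Upsilon_{g_2}$, is false under dominance alone.

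Take $n=2$, $f_1=x^2-1$ and $f_2=(x+2)(x+3)$, so $g_1=\lambda_1\lambda_2-1$ and $g_2=\lambda_1\lambda_2+\tfrac52(\lambda_1+\lambda_2)+6$. Since $r(f_2)=-2\le 1=r(f_1)$ and $r(f_2')=-\tfrac52\le 0=r(f_1')$, Proposition \ref{DominanceUnivariable} and Theorem \ref{DominancePreserve} give $g_2\triangleleft g_1$; this is also easy to check by hand. Now consider $\lambda=(-\tfrac32,-\tfrac32)$.
\begin{itemize}
\item It lies in $\Upsilon_{g_2}$: there $\lambda_i+\tfrac52=1$ and $g_2=\tfrac34$.
\item It satisfies $g_1(\lambda)=\tfrac54>0$.
\item It is not in $\Upsilon_{g_1}=\{\lambda_1>0,\ \lambda_1\lambda_2>1\}$. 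Along your ray $\lambda+t\mathbf 1$, $g_1$ vanishes at $t=\tfrac12$ and $t=\tfrac52$.
\end{itemize}
So the reverse inclusion you need fails, and $g_2\not\prec g_1$. This is consistent with Theorem \ref{PositiveUnivariable}, since $f_1(-2)=3>0$. Moreover, at this point $g_1\partial_1g_2-g_2\partial_1g_1=\tfrac54+\tfrac98>0$.

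Consequences for your proposal:
\begin{itemize}
\item The inequality you flagged as the ``main obstacle'' is not merely hard; it is false on $\Upsilon_{g_2}$.
\item Your fallback fails for the same reason, since $g_2y+g_1\in\Ga$ is by definition $g_2\prec g_1$.
\item The restriction to $\Upsilon_{g_1}$ in the statement is essential.
\item Separately, Theorem \ref{PositiveDerivative} covers only multi-affine polynomials, whereas the statement concerns arbitrary $g_1,g_2\in\Ga_n$.
\end{itemize}

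The paper gives no proof here; it imports the result from Fang--Ma. The missing idea is to work only at points $x\in\Upsilon_{g_1}$ and look \emph{backward} along the coordinate line.

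\textbf{Multi-affine case.} Here $s\mapsto g_j(x-se_i)=g_j(x)-s\,\partial_ig_j(x)$ is affine, and $\partial_ig_1(x)>0$ by Definition \ref{FMGdefinition}(2) unless $\partial_ig_1\equiv0$. Let $s_1=g_1(x)/\partial_ig_1(x)$ be the first zero of $g_1$ on the backward ray, with $s_1=\infty$ if $\partial_ig_1\equiv0$. The segment from $x$ to $x-s_1e_i$ is connected, contains $x$, and has $g_1>0$, so it lies in $\Upsilon_{g_1}\subset\Upsilon_{g_2}$. Hence $g_2(x)-s\,\partial_ig_2(x)>0$ for $0\le s<s_1$, which is exactly $g_1\partial_ig_2\le g_2\partial_ig_1$ at $x$. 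So your Step 1 already finishes the multi-affine case.

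\textbf{General case.} Here the inclusion alone is not enough. For example, $p_1=x(x+10)$ and $p_2=x^2$ share the component $(0,\infty)$, yet $p_1p_2'-p_2p_1'=10x^2>0$ there. This is where the higher-order dominance enters:
\begin{enumerate}
\item Restrict $g_1,g_2$ to the line $x+se_i$ to get univariate $p_1,p_2$. The case $\partial_ig_2\equiv0$ is trivial.
\item Each $\Upsilon_{\partial_i^kg_j}$ is open, passes PRT and contains $x$. So it meets the line in the ray $(r(p_j^{(k)}),\infty)$, and dominance gives $r(p_2^{(k)})\le r(p_1^{(k)})$.
\item After normalizing leading coefficients, Theorem \ref{DominancePreserve} yields $\Pol_Np_2\triangleleft\Pol_Np_1$ with $N=\deg p_1$.
\item Apply the multi-affine case at $s\mathbf 1$ for $s>r(p_1)$ and sum over the $N$ coordinates. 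This gives $p_1p_2'-p_2p_1'\le0$ there, and at $s=0$ this is the claim.
\end{enumerate}
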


For univariate right-Noetherian polynomials, proper position can be characterized by Theorem 10.12 of \cite{FangMaGarding}.

\begin{thm}[Theorem 10.12 of \cite{FangMaGarding}]
\label{PositiveUnivariable}
Let $f_2, f_1\in \Ga_1$ be univariate monic polynomials of the same degree $d$. Then $f_2\prec f_1$ if and only if, for every $k=0,1,\ldots,d-1$, $r(f_2^{(k)}) \le r(f_1^{(k)})$ and $f_1^{(k)}(x)\le 0$ for all $x \in [r(f_2^{(k)}),r(f_1^{(k)}))$.
\end{thm}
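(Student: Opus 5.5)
The plan is to verify the definition of $f_2\prec f_1$ directly for the bivariate polynomial $p(x,y)=f_2(x)y+f_1(x)$, with $f_2, f_1$ monic of degree $d$. Its nonzero partial derivatives are
\[
p_k:=\partial_x^k p=f_2^{(k)}(x)y+f_1^{(k)}(x)\quad(0\le k\le d),
\qquad
\partial_y\partial_x^k p=f_2^{(k)}(x),
\]
together with further $x$-derivatives of these. Since $f_2,f_1\in\Ga_1$, the univariate parts are already handled by Lemma \ref{FangMaLinCorrespondence}. Viewed in $\RR^2$, the component of $\{f_2^{(k)}>0\}$ passing PRT is $(r_2^{(k)},\infty)\times\RR$. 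Here $r_i^{(k)}:=r(f_i^{(k)})$, and $r_2^{(d)}=-\infty$ by convention. So $f_2\prec f_1$ amounts to three requirements for each $k\le d-1$:
\begin{enumerate}
\item $\{p_k>0\}$ has a unique PRT component $\Upsilon_{p_k}$;
\item $\Upsilon_{p_k}\subseteq(r_2^{(k)},\infty)\times\RR$;
\item $\Upsilon_{p_k}\subseteq\Upsilon_{p_{k+1}}$.
\end{enumerate}

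On $x>r_2^{(k)}$ we have $f_2^{(k)}>0$. There $\{p_k>0\}$ is the epigraph $\{y>h_k(x)\}$ of
\[
h_k:=-f_1^{(k)}/f_2^{(k)}.
\]
This epigraph is connected, and it is the only candidate for a PRT component, since any PRT set contains points with $x,y\to+\infty$. It is PRT exactly when $h_k$ is nonincreasing on $(r_2^{(k)},\infty)$. Condition (2) then says this epigraph is a whole connected component. The line $x=r_2^{(k)}$ has $p_k\equiv f_1^{(k)}(r_2^{(k)})$ there. So if $f_1^{(k)}(r_2^{(k)})\le0$, the line separates the epigraph from the rest of $\{p_k>0\}$. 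If instead $f_1^{(k)}(r_2^{(k)})>0$, a neighbourhood of a bounded segment of that line lies in $\{p_k>0\}$, and the component crosses it. Hence (1)–(2) are equivalent to two conditions: $f_1^{(k)}(r_2^{(k)})\le0$, and $h_k$ nonincreasing. Condition (3) is $h_k\ge h_{k+1}$ on $(r_2^{(k)},\infty)$, using $r_2^{(k)}\ge r_2^{(k+1)}$ from right-Noetherianity.

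The key computation links these via $f_1^{(k)}=-h_kf_2^{(k)}$ and $f_1^{(k+1)}=-h_{k+1}f_2^{(k+1)}$. Differentiating gives
\[
h_k'=\frac{f_2^{(k+1)}}{f_2^{(k)}}\,(h_{k+1}-h_k),
\]
and $f_2^{(k+1)}/f_2^{(k)}>0$ on $(r_2^{(k)},\infty)$. So monotonicity of $h_k$ is equivalent to (3).

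For sufficiency I will do a downward induction on $k$. The base case $k=d$ is trivial: $h_d\equiv-1$. Assume $h_{k+1}$ is nonincreasing and set $u=h_k-h_{k+1}$. The identity gives
\[
\bigl(f_2^{(k)}u\bigr)'=-f_2^{(k)}h_{k+1}'\ge0 .
\]
Moreover
\[
f_2^{(k)}u=-f_1^{(k)}-h_{k+1}f_2^{(k)}\to -f_1^{(k)}(r_2^{(k)})\ge0
\qquad\text{as }x\downarrow r_2^{(k)} .
\]
Here $h_{k+1}$ is finite at $r_2^{(k)}$ when $r_2^{(k)}>r_2^{(k+1)}$. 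In the equality case a short limiting argument is needed, and I expect this boundary behaviour to be the main technical obstacle. Granting it, $u\ge0$, so (3) holds and $h_k$ is nonincreasing, which closes the induction.

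The hypothesis $f_1^{(k)}(r_2^{(k)})\le 0$ is the case $x=r_2^{(k)}$ of the stated condition. That condition is meaningful only if $r_2^{(k)}\le r_1^{(k)}$: if $r_2^{(k)}=r_1^{(k)}$, then $f_1^{(k)}(r_2^{(k)})=0$.

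For necessity, assume $f_2\prec f_1$. Then $f_1^{(k)}(r_2^{(k)})\le0$, and this already forces $r_2^{(k)}\le r_1^{(k)}$: otherwise $r_2^{(k)}>r_1^{(k)}$, and since $f_1^{(k)}$ is monic it is positive beyond its largest root, giving $f_1^{(k)}(r_2^{(k)})>0$. Now suppose $f_1^{(k)}(x_0)>0$ for some $x_0\in(r_2^{(k)},r_1^{(k)})$. Then $h_k(x_0)<0=h_k(r_1^{(k)})$, which contradicts the monotonicity of $h_k$. This yields exactly the conditions of Theorem \ref{PositiveUnivariable}.
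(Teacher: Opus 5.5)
Your argument is correct. The paper contains no proof of this statement: it is imported as Theorem~10.12 of \cite{FangMaGarding} and used as a black box. Your direct verification of the axioms of Definition~\ref{FMGdefinition} for $p=f_2(x)y+f_1(x)$ is therefore an independent, self-contained route.

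The individual steps check out:
\begin{itemize}
\item The $\partial_y$-containment forces $\Upsilon_{p_k}=\{x>r(f_2^{(k)}),\ y>h_k(x)\}$. This set is a full component exactly when $f_1^{(k)}(r(f_2^{(k)}))\le0$, and it passes PRT exactly when $h_k$ is nonincreasing.
\item The identity $h_k'=\frac{f_2^{(k+1)}}{f_2^{(k)}}(h_{k+1}-h_k)$ makes the $\partial_x$-containment equivalent to that monotonicity.
\item The downward induction via $(f_2^{(k)}u)'=-f_2^{(k)}h_{k+1}'\ge0$ is sound, and so is the necessity argument.
\end{itemize}

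The boundary step you left as ``granting it'' takes one line and should be written out. On $(r(f_2^{(k)}),\infty)$,
\[
h_{k+1}f_2^{(k)}=-f_1^{(k+1)}\,\frac{f_2^{(k)}}{f_2^{(k+1)}},
\]
and $f_2^{(k)}/f_2^{(k+1)}\to0$ as $x\downarrow r(f_2^{(k)})$ in both cases. In the equality case $r(f_2^{(k)})=r(f_2^{(k+1)})$, this point is a root of $f_2^{(k)}$ of some multiplicity $m\ge2$, so the ratio behaves like $(x-r(f_2^{(k)}))/m$. Hence $f_2^{(k)}u\to-f_1^{(k)}(r(f_2^{(k)}))\ge0$ with no case distinction.

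Compared with the citation, your route is elementary and gives a slightly sharper statement. For sufficiency you only use the single-point sign condition $f_1^{(k)}(r(f_2^{(k)}))\le0$ for each $k$. The full condition on $[r(f_2^{(k)}),r(f_1^{(k)}))$ is then recovered from the monotonicity of $h_k$, exactly as in your necessity argument. The citation buys brevity and keeps the result inside Fang--Ma's general theory, which is all the paper needs.
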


We will study the strict version of Theorem \ref{PositiveDerivative} in the next section. Let's also recall the ideal version.

\begin{thm}[Remark 9.5 of \cite{FangMaIdealGarding}]
\label{ConcaveQuotient}
For non-trivial $g_1, g_2\in\IG$, $g_2\idealprec g_1$ if and only if $g_2\prec g_1$ and $\frac{\partial^\alpha g_1(x)}{\partial^\alpha g_2(x)}$ is a concave function in $\Upsilon_{\partial^\alpha g_2(x)}$ for all \(\alpha=(\alpha_1,\ldots,\alpha_n)\in\mathbb Z_{\ge0}^n\) such that $\partial^\alpha g_2(x) \not\equiv 0$.
\end{thm}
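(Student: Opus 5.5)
The plan is to reduce everything to the bivariate polynomial $h(x,y)=g_2(x)y+g_1(x)$ and to describe the Fang-Ma-G{\aa}rding component of each of its partial derivatives explicitly as an epigraph. By definition, $g_2\idealprec g_1$ means $h\in\IG$, that is, $h\in\Ga$ and every derivative component $\Upsilon_{\partial^\beta h}$ is convex. Since $h$ is affine in $y$, its nonzero partial derivatives come in three kinds:
\begin{itemize}
\item $\partial^\alpha h=\partial^\alpha g_2(x)\,y+\partial^\alpha g_1(x)$, with $\alpha$ a multi-index in $x$ only;
\item $\partial_y\partial^\alpha h=\partial^\alpha g_2(x)$;
\item the degenerate case $\partial^\alpha h=\partial^\alpha g_1(x)$ when $\partial^\alpha g_2\equiv0$.
\end{itemize}
For the last two kinds the component is simply $\Upsilon_{\partial^\alpha g_i}\times\RR$. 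It is convex exactly when $\Upsilon_{\partial^\alpha g_i}$ is convex, which holds because $g_1,g_2\in\IG$ is part of the standing hypothesis of the definition. So everything reduces to the first kind with $\partial^\alpha g_2\not\equiv0$.

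\textbf{Key step.} Assume $h\in\Ga$, which is the same as $g_2\prec g_1$. I will show that for every $\alpha$ with $\partial^\alpha g_2\not\equiv0$,
\[
 \Upsilon_{\partial^\alpha h}=S_\alpha:=\Bigl\{(x,y):x\in\Upsilon_{\partial^\alpha g_2},\ y>-\tfrac{\partial^\alpha g_1(x)}{\partial^\alpha g_2(x)}\Bigr\}.
\]
\begin{itemize}
\item Condition (2) in Definition \ref{FMGdefinition} gives $\Upsilon_{\partial^\alpha h}\subseteq\Upsilon_{\partial_y\partial^\alpha h}=\Upsilon_{\partial^\alpha g_2}\times\RR$.
\item On $\Upsilon_{\partial^\alpha g_2}\times\RR$ we have $\partial^\alpha g_2>0$, so the set $\{\partial^\alpha h>0\}$ restricted there is exactly $S_\alpha$.
\item $S_\alpha$ is connected: it fibres over the connected set $\Upsilon_{\partial^\alpha g_2}$ with open rays as fibres, and the boundary function $-\partial^\alpha g_1/\partial^\alpha g_2$ is continuous.
\item Hence $S_\alpha$ is a connected subset of $\{\partial^\alpha h>0\}$ that contains the component $\Upsilon_{\partial^\alpha h}$. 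It therefore lies in that same component, which forces equality.
\end{itemize}

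\textbf{Convexity.} An epigraph $\{(x,y):x\in U,\ y>\phi(x)\}$ of a function $\phi$ over a convex set $U$ is convex if and only if $\phi$ is convex on $U$. Here $U=\Upsilon_{\partial^\alpha g_2}$ is convex by the standing hypothesis. So $\Upsilon_{\partial^\alpha h}$ is convex if and only if $\partial^\alpha g_1/\partial^\alpha g_2$ is concave on $\Upsilon_{\partial^\alpha g_2}$.

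\textbf{The two directions.}
\begin{itemize}
\item If $g_2\idealprec g_1$, then $h\in\IG\subseteq\Ga$, so $g_2\prec g_1$, and the key step together with convexity of the components gives concavity of every ratio.
\item Conversely, $g_2\prec g_1$ gives $h\in\Ga$, so the key step applies. Concavity of the ratios makes every component of the first kind convex, and the other two kinds are convex as noted above. Hence $h\in\IG$.
\end{itemize}

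\textbf{Main obstacle.} The delicate point is the identification $\Upsilon_{\partial^\alpha h}=S_\alpha$, in particular confirming that the inclusion into $\Upsilon_{\partial^\alpha g_2}\times\RR$ is genuinely supplied by the G{\aa}rding axioms for $h$ (condition (2) applied to $\partial_y$), rather than needing to be proved separately. I would also double-check the degenerate case $\partial^\alpha g_2\equiv0$ against the convention $\Upsilon_0=\RR^n$ in Definition \ref{FMGdefinition}.
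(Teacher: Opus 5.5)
Your argument is correct and complete. The paper gives no proof of this statement; it only quotes Remark 9.5 of Fang--Ma, so there is no in-paper argument to compare with. Your route supplies a self-contained verification of the cited fact:
\begin{enumerate}
\item apply axiom (2) in the $y$-direction to get $\Upsilon_{\partial^\alpha h}\subseteq\Upsilon_{\partial^\alpha g_2}\times\RR$;
\item use connectedness to identify $\Upsilon_{\partial^\alpha h}$ with the strict epigraph of $-\partial^\alpha g_1/\partial^\alpha g_2$ over $\Upsilon_{\partial^\alpha g_2}$;
\item use that a strict epigraph over a convex base is convex if and only if the function is convex.
\end{enumerate}

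The one point you left implicit also settles the obstacle you flagged. Let $p(x)$ be a polynomial not involving $y$. The connected components of $\{p>0\}\subset\RR^{n+1}$ are exactly the sets $C\times\RR$, where $C$ ranges over the components of $\{p>0\}\subset\RR^n$, and $C\times\RR$ passes PRT if and only if $C$ does. Hence the Fang--Ma--G{\aa}rding component of $p$ in $n+1$ variables is $\Upsilon_p\times\RR$. This covers both $\partial_y\partial^\alpha h=\partial^\alpha g_2$ and the degenerate case $\partial^\alpha h=\partial^\alpha g_1$. When everything vanishes identically, the convention $\Upsilon_0=\RR^{n+1}$ is trivially convex.
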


As a corollary, for univariate right-Noetherian polynomials, ideal proper position can be characterized in the following way:

\begin{cor}
\label{IdealProperUnivariable}
Let $f_2, f_1\in \Ga_1=\IG_1$ be univariate monic polynomials of the same degree $d$. Then $f_2\idealprec f_1$ if and only if $f_2\prec f_1$ and,  for every $k=0,1,\ldots,d-1$, $\frac{f_1^{(k)}(x)}{f_2^{(k)}(x)}$ is a concave function in $(r(f_2^{(k)}),\infty)$.
\end{cor}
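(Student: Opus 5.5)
This corollary follows by specialising Theorem \ref{ConcaveQuotient} to the univariate case $n=1$. The two ingredients are Lemma \ref{FangMaLinCorrespondence}, which identifies $\Ga_1$ with $\IG_1$, and a careful description of which derivatives $\partial^\alpha f_2$ are nontrivial.

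First I check that Theorem \ref{ConcaveQuotient} applies. Since $f_1,f_2\in\Ga_1=\IG_1$ are nontrivial (monic of degree $d\ge1$), the theorem gives that $f_2\idealprec f_1$ holds if and only if two conditions hold: $f_2\prec f_1$, and $\partial^\alpha f_1/\partial^\alpha f_2$ is concave on $\Upsilon_{\partial^\alpha f_2}$ for every $\alpha$ with $\partial^\alpha f_2\not\equiv0$. In one variable, $\alpha=k\in\mathbb Z_{\ge0}$ and $\partial^\alpha f_2=f_2^{(k)}$. This derivative is nonzero exactly for $0\le k\le d$.

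Next I split into cases according to $k$.

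\textbf{Case $k=d$.} Both $f_1^{(d)}$ and $f_2^{(d)}$ equal $d!$, since the polynomials are monic of the same degree. So the quotient is the constant $1$, which is concave, and this index imposes no condition. That is why only $k\le d-1$ appears in the statement.

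\textbf{Case $0\le k\le d-1$.} I must identify $\Upsilon_{f_2^{(k)}}$ with $(r(f_2^{(k)}),\infty)$. By Definition \ref{right-Noetherian}, $f_2$ is right-Noetherian, so $f_2^{(k)}$ has a largest real root $r(f_2^{(k)})$. Because $f_2^{(k)}$ has positive leading coefficient, it is positive on $(r(f_2^{(k)}),\infty)$ and vanishes at the root. Hence $(r(f_2^{(k)}),\infty)$ is a connected component of $\{f_2^{(k)}>0\}$. It passes PRT, since adding a nonnegative number keeps us in the ray. Any other component is a bounded interval lying to the left of the root, and so fails PRT. By the uniqueness in Definition \ref{FMGdefinition}, we get $\Upsilon_{f_2^{(k)}}=(r(f_2^{(k)}),\infty)$.

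Substituting these identifications into Theorem \ref{ConcaveQuotient} gives exactly the stated equivalence. The only real content is matching the component with the ray and discarding the trivial top-order index, so I expect no genuine obstacle. The one point to handle with care is the convention that a positive constant has $\Upsilon=\RR$, which is harmless in the case $k=d$.
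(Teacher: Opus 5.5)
Your proposal is correct and is essentially the intended argument. The paper states this as an immediate corollary of Theorem~\ref{ConcaveQuotient}, obtained by taking $n=1$, identifying $\Upsilon_{f_2^{(k)}}$ with $(r(f_2^{(k)}),\infty)$, and discarding the trivial index $k=d$, where the quotient is the constant $1$.

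There are two minor slips. First, right-Noetherianity of $f_2$ comes from Lemma~\ref{FangMaLinCorrespondence}, not from Definition~\ref{right-Noetherian}. Second, the other components of $\{f_2^{(k)}>0\}$ need not be bounded, e.g.\ $(-\infty,-1)$ for $x^2-1$; they are only bounded above, which still makes them fail PRT. That step is unnecessary anyway, since the uniqueness in Definition~\ref{FMGdefinition} already settles it.
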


By definition, the dominance condition can also be characterized in the following way:

\begin{prop}
\label{DominanceUnivariable}
Let $f_2, f_1\in \Ga_1$ be univariate monic polynomials of degree $1\le d_2\le d_1\le n$. Then $f_2\triangleleft f_1$ if and only if, for every $k=0,1,\ldots,d_2-1$, $r(f_2^{(k)}) \le r(f_1^{(k)})$.
\end{prop}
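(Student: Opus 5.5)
The plan is to unwind Definition~\ref{Dominance} directly in the univariate setting, using Lemma~\ref{FangMaLinCorrespondence} to identify every relevant Fang-Ma-G{\aa}rding component. For a univariate monic right-Noetherian $f$ of degree $d$, each derivative $f^{(k)}$ with $0\le k\le d-1$ is again monic (up to a positive constant) and right-Noetherian. Indeed, its root sequence is a tail of that of $f$, so $f^{(k)}\in\Ga_1$. The key description I will establish first is
\[
  \Upsilon_{f^{(k)}}=(r(f^{(k)}),\infty),\qquad 0\le k\le d-1 .
\]
This holds because $f^{(k)}$ is positive to the right of its largest real root and nonpositive at that root. So $(r(f^{(k)}),\infty)$ is the unique connected component of $\{f^{(k)}>0\}$ that is unbounded to the right, i.e.\ the unique one passing PRT. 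For $k\ge d$ the derivative $f^{(k)}$ is either a positive constant (when $k=d$) or zero, and by convention its component is all of $\RR$.

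Next I translate the dominance condition. By definition, $f_2\triangleleft f_1$ means that $\Upsilon_{f_1^{(k)}}\subset\{f_2^{(k)}>0\}$ for every $k$ with $f_2^{(k)}\not\equiv0$, i.e.\ for $0\le k\le d_2$. The paper notes this is equivalent to $\Upsilon_{f_1^{(k)}}\subset\Upsilon_{f_2^{(k)}}$; I will verify that directly. For $k=d_2$ the condition is vacuous, since $f_2^{(d_2)}$ is a positive constant. For $0\le k\le d_2-1$ we also have $k\le d_1-1$, so both components are the half-lines above. The inclusion $(r(f_1^{(k)}),\infty)\subset(r(f_2^{(k)}),\infty)$ is then equivalent to $r(f_2^{(k)})\le r(f_1^{(k)})$, which gives both directions at once.

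For the equivalence with the first formulation, $(r(f_1^{(k)}),\infty)\subset\{f_2^{(k)}>0\}$ certainly follows from the inequality of roots. Conversely, if this inclusion holds and $r(f_2^{(k)})>r(f_1^{(k)})$, then the point $r(f_2^{(k)})$ lies in $\Upsilon_{f_1^{(k)}}$ while $f_2^{(k)}$ vanishes there, which is a contradiction.

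The only mildly delicate step is justifying that the PRT component is exactly the right half-line. This needs a remark that a bounded component of $\{f^{(k)}>0\}$ fails PRT, and that the largest real root exists for each derivative. Both follow from the right-Noetherian hypothesis in Definition~\ref{right-Noetherian}, since all entries of the root sequence exist. Everything else is bookkeeping on indices.
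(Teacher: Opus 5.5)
Your proposal is correct and is exactly the unwinding the paper intends when it says this characterization holds ``by definition'': identify each $\Upsilon_{f^{(k)}}$ with the half-line $(r(f^{(k)}),\infty)$ for $k\le d-1$, note that $k=d_2$ is vacuous, and compare half-lines. One small wording fix: the components to rule out are all those bounded above (a component $(-\infty,a)$ can occur), not only the bounded ones, but your sentence about the unique component ``unbounded to the right'' already covers this case.
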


Let us recall the following standard results. They are known to experts and can be proved using the Courant-Fischer-Weyl min-max principle, the Ky Fan-Lidskii inequality, and Rado's corollary of the Hardy-Littlewood-P\'{o}lya theorem. For example, see Corollary 2.20 and Proposition 2.21 of \cite{ChenNieXuNumericalCriterion} for more details.

\begin{prop}
\label{PRTmatrix}
Let $\Upsilon\subseteq\mathbb{R}^n$ be invariant under coordinate
permutations and satisfy PRT. If $A, B, C, D\in\Herm(n)$, $C$ is positive definite, and $A-C$, $B$, and $D-B$ are positive semi-definite, then $\lambda(A^{-1}B)\in\Upsilon$ implies that \[
  \lambda(C^{-1}D)\in\Upsilon.
\]
\end{prop}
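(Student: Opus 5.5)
The plan is to reduce everything to eigenvalue comparisons in $\Gamma_n^+$, where PRT and permutation invariance of $\Upsilon$ can be used directly. Since $C>0$ and $A-C\ge 0$, $A$ is positive definite, so $A^{-1}B$ is similar to the Hermitian matrix $A^{-1/2}BA^{-1/2}$. Its eigenvalues are therefore real and, because $B\ge0$, nonnegative. The same holds for $C^{-1}D$, similar to $C^{-1/2}DC^{-1/2}$, with $D\ge B\ge 0$. Both eigenvalue vectors are thus well-defined up to permutation, and permutation invariance of $\Upsilon$ means we may order them decreasingly. The goal is then to show $\lambda_i(C^{-1}D)\ge\lambda_i(A^{-1}B)$ for each $i$, with both sorted in decreasing order. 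Writing $\lambda(C^{-1}D)=\lambda(A^{-1}B)+v$ with $v\in\overline{\Gamma_n^+}$, PRT gives the conclusion.

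The comparison goes in two monotone steps through the intermediate matrix $C^{-1}B$. The first step passes from $A^{-1}B$ to $C^{-1}B$, and the second from $C^{-1}B$ to $C^{-1}D$. Both use the generalized Rayleigh quotient and Courant--Fischer--Weyl: for $P>0$ and Hermitian $Q$,
\[
\lambda_i(P^{-1}Q)=\max_{\dim V=i}\ \min_{0\neq x\in V}\frac{x^*Qx}{x^*Px}.
\]
This follows by applying the usual min-max to $P^{-1/2}QP^{-1/2}$ and substituting $x=P^{-1/2}y$, which preserves dimensions of subspaces.

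For the first step, note that $x^*Bx\ge0$ and $x^*Ax\ge x^*Cx>0$ for $x\neq0$, so
\[
\frac{x^*Bx}{x^*Ax}\le\frac{x^*Bx}{x^*Cx}.
\]
Taking min over $V$ and then max over $\dim V=i$ gives $\lambda_i(A^{-1}B)\le\lambda_i(C^{-1}B)$. For the second step, $x^*Dx\ge x^*Bx$ gives
\[
\frac{x^*Bx}{x^*Cx}\le\frac{x^*Dx}{x^*Cx},
\]
and hence $\lambda_i(C^{-1}B)\le\lambda_i(C^{-1}D)$. Combining the two yields the componentwise inequality for the sorted eigenvalues.

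The only point needing care is the first step. There the numerator's sign, $B\ge0$, is essential for the inequality between the quotients to go the right way; with an indefinite $B$ the eigenvalue comparison could fail. This is exactly why the hypothesis that $B$ is positive semi-definite appears. Everything else is the standard min-max principle, so I expect no real obstacle beyond correctly setting up the generalized Rayleigh quotient. The Ky Fan--Lidskii and Hardy--Littlewood--P\'olya tools mentioned in the paper are not needed for this PRT version.
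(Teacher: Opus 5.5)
Your argument is correct and follows essentially the route the paper indicates. The paper gives no proof of its own here: it attributes this statement to the Courant--Fischer--Weyl min-max principle, with details deferred to Corollary 2.20 of Chen--Nie--Xu. Your two monotone steps $\lambda_i(A^{-1}B)\le\lambda_i(C^{-1}B)\le\lambda_i(C^{-1}D)$ via the generalized Rayleigh quotient, followed by permutation invariance and PRT, are exactly that argument, and you are right that Ky Fan--Lidskii and Hardy--Littlewood--P\'olya are only needed for the companion convexity statement.
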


\begin{prop}
\label{convexmatrix}
Let $\Upsilon\subseteq\mathbb{R}^n$ be convex and invariant under
coordinate permutations.  Then
\[
  \{A\in\Herm(n):\lambda(A)\in\Upsilon\}
\]
is convex.
\end{prop}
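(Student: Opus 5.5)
We need to prove Proposition \ref{convexmatrix}: if $\Upsilon$ is convex and symmetric, then $\{A\in\Herm(n):\lambda(A)\in\Upsilon\}$ is convex. The plan is to reduce it to a majorization statement about eigenvalue vectors.

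Take $A,B$ in the set and $t\in[0,1]$, and set $M=tA+(1-t)B$. Order all eigenvalue vectors decreasingly. By Ky Fan's maximum principle, for each $k$ the sum of the $k$ largest eigenvalues of a Hermitian matrix equals $\max\operatorname{tr}(P H)$ over rank-$k$ orthogonal projections $P$. This quantity is therefore convex in $H$, which gives
\[
\sum_{i\le k}\lambda_i(M)\le t\sum_{i\le k}\lambda_i(A)+(1-t)\sum_{i\le k}\lambda_i(B).
\]
For $k=n$ this is an equality, because the trace is linear. Hence $\lambda(M)$ is majorized by $v:=t\lambda(A)+(1-t)\lambda(B)$. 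This is exactly the Ky Fan--Lidskii inequality cited above.

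By Rado's corollary of the Hardy--Littlewood--P\'olya theorem, a vector majorized by $v$ lies in the convex hull of the permutations of $v$. So $\lambda(M)=\sum_\sigma c_\sigma\, \sigma(v)$ with $c_\sigma\ge0$ and $\sum_\sigma c_\sigma=1$.

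It remains to show that each $\sigma(v)$ lies in $\Upsilon$:
\begin{itemize}
\item $\lambda(A)$ and $\lambda(B)$, listed in decreasing order, lie in $\Upsilon$ by permutation invariance.
\item Hence $v\in\Upsilon$ by convexity.
\item Hence $\sigma(v)\in\Upsilon$ for every permutation $\sigma$, again by invariance.
\end{itemize}
Convexity of $\Upsilon$ then gives $\lambda(M)\in\Upsilon$.

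The only substantive step is the majorization $\lambda(M)\prec v$, and it follows directly from the Ky Fan variational characterization. Note that PRT is not needed for this proposition.
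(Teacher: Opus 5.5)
Your argument is correct and is essentially the route the paper has in mind. The paper gives no proof of its own: it cites Corollary 2.20 and Proposition 2.21 of Chen--Nie--Xu and names exactly the tools you use, namely the Ky Fan--Lidskii majorization $\lambda(tA+(1-t)B)\prec t\lambda(A)+(1-t)\lambda(B)$ and Rado's corollary of Hardy--Littlewood--P\'olya, finished by symmetry and convexity of $\Upsilon$. One step is left implicit: $v$ is itself decreasingly ordered, so its partial sums are the ones majorization compares. This is immediate, because $v$ is a convex combination of two decreasing vectors.
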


\section{Strict proper position}

In this section, we give a characterization of univariate right-Noetherian polynomials such that their polarizations satisfy the strict inequality of Theorem \ref{PositiveDerivative}(3).

\begin{prop}\label{Strict proper position}
Assume that $f_2,f_1$ are univariate monic right-Noetherian polynomials of the same degree $d$ such that $f_2\prec f_1$. Set $r_2=r(f_2)$, let $m$ be the multiplicity of $r_2$ as a root of $f_2$, and put $F=g_1/g_2$, where $g_i=\Pol_n f_i$, $i=1,2$ for $n\ge d$.
Then the following are equivalent:
\begin{enumerate}
\item[(i)] $\partial_{\lambda_i}F(\lambda)>0$ for every
      $\lambda\in\Upsilon_{g_2}$ and every $1\le i\le n$.
\item[(ii)] $\displaystyle\lim_{x\downarrow r_2}f_1(x)/f_2(x)=-\infty$.
\item[(iii)] There is an integer $0\le q<m$ such that
      $f_1^{(k)}(r_2)=0$ for $0\le k<q$ and
      $f_1^{(q)}(r_2)<0$.
\item[(iv)] There is an integer $0\le q<m$ such that
      $f_1^{(k)}(r_2)=0$ for $0\le k<q$ and
      $f_1^{(q)}(r_2)\not=0$.
\end{enumerate}
We say that $f_2$ is in strictly proper position relative to $f_1$, or $f_2\prec f_1$ strictly, when these equivalent conditions hold.
\end{prop}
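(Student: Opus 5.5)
The plan is to prove the cycle (iii)$\Rightarrow$(iv)$\Rightarrow$(iii), then (ii)$\Leftrightarrow$(iii) by a local computation at $r_2$, and finally (i)$\Leftrightarrow$(ii), which is the real content.

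\emph{(iii)$\Leftrightarrow$(iv).} (iii)$\Rightarrow$(iv) is trivial. For the converse, suppose $f_1^{(q)}(r_2)>0$ with $q<m$ and $f_1^{(k)}(r_2)=0$ for $k<q$. Since $q<m$, $r_2$ is a root of $f_2^{(q)}$. By right-Noetherianity $r(f_2^{(q)})\le r(f_2)=r_2$, so $r(f_2^{(q)})=r_2$. Theorem~\ref{PositiveUnivariable} then gives $r(f_1^{(q)})\ge r_2$ and $f_1^{(q)}\le 0$ on $[r_2,r(f_1^{(q)}))$. If $r(f_1^{(q)})>r_2$, this contradicts $f_1^{(q)}(r_2)>0$. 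If $r(f_1^{(q)})=r_2$, then $f_1^{(q)}(r_2)=0$, again a contradiction.

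\emph{(ii)$\Leftrightarrow$(iii).} Since $r_2$ is the largest root of the monic $f_2$, we have $f_2(x)=c(x-r_2)^m(1+O(x-r_2))$ with $c>0$. Let $q$ be the vanishing order of $f_1$ at $r_2$. Note $f_1\not\equiv 0$ since it is monic, so $q$ is finite. Then $f_1(x)=\frac{f_1^{(q)}(r_2)}{q!}(x-r_2)^q(1+O(x-r_2))$, and hence
\[
\frac{f_1}{f_2}\sim \frac{f_1^{(q)}(r_2)}{c\,q!}(x-r_2)^{q-m}.
\]
This tends to $-\infty$ as $x\downarrow r_2$ exactly when $q<m$ and $f_1^{(q)}(r_2)<0$. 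Otherwise the limit is finite or $+\infty$.

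\emph{(i)$\Leftrightarrow$(ii).} By Theorem~\ref{PositiveDerivative}(3) we already know $\partial_iF\ge0$ on $\Upsilon_{g_2}$. Fix $i$ and write $g_j=a_j\lambda_i+b_j$, where $a_j=\partial_ig_j$ and $b_j$ do not depend on $\lambda_i$. Then
\[
\partial_iF=\frac{D}{g_2^2},\qquad D:=a_1b_2-a_2b_1,
\]
and $D$ is a multi-affine polynomial in the remaining variables $\lambda'$ with $D\ge0$ on the projection of $\Upsilon_{g_2}$.

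For (ii)$\Rightarrow$(i), suppose $D(\lambda'_0)=0$ at some admissible $\lambda'_0$. Along each coordinate line through $\lambda'_0$, $D$ is affine and attains an interior minimum $0$ on an open interval, so it is constant along that line. Iterating this over coordinates, and using PRT of $\Upsilon_{g_2}$ to stay inside the region, gives $D\equiv0$ on an open set and hence identically. Evaluating on the diagonal, $D(x,\dots,x)$ is a positive multiple of $f_1'f_2-f_1f_2'$, after using the polarization identities $\partial_i\Pol_n f=\frac1n\Pol_{n-1}f'$ and their analogues for $b_j$. So $f_1/f_2$ would be constant, contradicting (ii).

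For (i)$\Rightarrow$(ii), the Möbius structure is the key. For fixed $\lambda'$, $t\mapsto F$ equals $(a_1t+b_1)/(a_2t+b_2)$. A strictly increasing Möbius map on the half-line $\{a_2t+b_2>0\}$ tends to $-\infty$ at the pole unless the numerator also vanishes there, in which case it is constant. So (i) forces $g_1/g_2\to-\infty$ when one approaches $\partial\Upsilon_{g_2}$ by decreasing one coordinate. To transfer this to the diagonal, I would argue by contradiction. If $q\ge m$, then $(x-r_2)^m$ divides $f_1$, so $f_1/f_2=h_1/h_2$ is smooth near $r_2$. I would then show that $D$ vanishes at the diagonal boundary point $\lambda'=(r_2,\dots,r_2)$, or at a suitable nearby point obtained by taking $\lambda_i$ large, and push that point into the interior to contradict strict positivity.

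I expect this last step to be the main obstacle: relating the one-variable degeneration along a coordinate line to the diagonal behaviour at $r_2$. The first thing I would try is to compute $D$ explicitly on points of the form $(x,\dots,x,y)$, using the polarization identities to write it in terms of $f_j$ and $f_j'$, and then let $x\downarrow r_2$.
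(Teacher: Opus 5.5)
Your arguments for (ii)$\Leftrightarrow$(iii)$\Leftrightarrow$(iv) are correct. For (iv)$\Rightarrow$(iii) you apply Theorem~\ref{PositiveUnivariable} to $f_2^{(q)},f_1^{(q)}$, whereas the paper uses the sign of $f_1'f_2-f_1f_2'$ near $r_2$; either works.

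The first gap is in (ii)$\Rightarrow$(i). Your claim that $D=a_1b_2-a_2b_1$ is multi-affine in $\lambda'$ is false. $D$ is a difference of products of two multi-affine polynomials, so it has degree up to $2$ in each variable. An interior zero of $D$ on a coordinate line therefore only forces a double root there, not constancy. In fact your intermediate lemma (an interior zero of $D$ forces $D\equiv0$) never uses (ii), and it is false. Take $n=d=3$, $f_2=x^2(x+1)$ and $f_1=x^2(x+\tfrac12)$; both are right-Noetherian, and $f_2\prec f_1$ by Theorem~\ref{PositiveUnivariable}. Then $g_2=\sigma_3+\tfrac13\sigma_2$ and $g_1=\sigma_3+\tfrac16\sigma_2$. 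For $i=1$ one computes $D=\tfrac16\lambda_2^2\lambda_3^2$. This vanishes at the admissible points $(\lambda_2,0)$ with $\lambda_2>0$, but it is not identically zero.

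So (ii) must enter quantitatively, and the paper does this as follows.
\begin{enumerate}
\item Translate so that $r_2=0$.
\item Using Theorem~\ref{PositiveUnivariable} and an induction on derivatives, show that $f_2\prec f_1+\varepsilon\binom nq x^q$ for $0<\varepsilon<-f_1^{(q)}(0)/(q!\binom nq)$. Hence $\partial_i\bigl((g_1+\varepsilon\sigma_q)/g_2\bigr)\ge0$.
\item Prove $\partial_i(\sigma_q/g_2)<0$ on $\Upsilon_{g_2}$ by writing $\sigma_q/g_2=(\sigma_q/\sigma_m)(\sigma_m/g_2)$. The second factor has $\partial_i\le 0$ because $\sigma_m\triangleleft g_2$ (Proposition~\ref{DominanceUnivariable}, Theorem~\ref{DominancePreserve}, Proposition~\ref{Dominance10.11}). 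The first has $\partial_i<0$ on $\Gamma_m$ by the Newton inequality for $q<m$.
\end{enumerate}
Combining these gives $\partial_i(g_1/g_2)\ge-\varepsilon\,\partial_i(\sigma_q/g_2)>0$.

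The second gap is (i)$\Rightarrow$(ii), for which you give only a plan. The points you propose, $(x,\dots,x,y)$ with $x\downarrow r_2$, leave $\Upsilon_{g_2}$ when $m\ge2$: after translating $r_2=0$, $g_2(0,\dots,0,y)=f_2(0)+f_2'(0)y/n=0$. Your "diagonal boundary point" also lies on $\partial\Upsilon_{g_2}$, and pushing it inside while keeping $D=0$ is exactly the missing content.

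The missing idea is the face on which a single elementary symmetric function survives. If (iv) fails, then $f_j^{(k)}(0)=0$ for $k<m$, so both $g_j$ are combinations of $\sigma_k$ with $k\ge m$. Take $\lambda=(\lambda_1,\dots,\lambda_m,0,\dots,0)$ with all $\lambda_i>0$. This point lies in $\Upsilon_{g_2}$, since $g_2(\lambda)>0$ and $\lambda+t\mathbf 1\in\Upsilon_{g_2}$ for $t>0$ by PRT. At such points only the $\sigma_m$ terms remain. Hence $F=f_1^{(m)}(0)/f_2^{(m)}(0)$ for all $\lambda_1>0$, and $\partial_1F(\lambda)=0$, so (i) fails.
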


\begin{proof}
A common translation of the univariate variable and of all the polarization variables allows us to assume that $r_2=0$. Now we prove that (ii), (iii) and (iv) are equivalent to each other.
Since $m$ is the multiplicity of $r_2$ as a root of $f_2$, we have $f_2^{(k)}(0)=0, k<m$, and $f_2^{(m)}(0)>0$. So, we have
\[
f_1(x) = \sum_{k=0}^{d} \frac{f_1^{(k)}(0)}{k!} x^k ,\quad f_2(x) = \sum_{k=0}^{d} \frac{f_2^{(k)}(0)}{k!} x^k= \sum_{k=m}^{d} \frac{f_2^{(k)}(0)}{k!} x^k .
\]
Suppose the first nonzero term of $f_1^{(k)}(0), 0\le k \le d$ is $f_1^{(q)}(0).$
When $x\downarrow 0$, $f_1\sim\frac{f_1^{(q)}(0)}{q!} x^q$ while $f_2\sim\frac{f_2^{(m)}(0)}{m!} x^m>0$, where $f\sim g$ means that $\lim_{x\downarrow 0}\frac{f}{g}=1$. Hence, (ii) and (iii) are equivalent. It is obvious that (iii)$\Rightarrow$(iv). For (iv)$\Rightarrow$(iii), we calculate 
\[f_1'f_2-f_1f_2'\sim \frac{q-m}{q!m!}f_1^{(q)}(0)f_2^{(m)}(0) x^{m+q-1}.\] Since $\frac{q-m}{q!m!}<0$, we see that $f_1^{(q)}(0)>0$ will contradict $(f_1/f_2)'\ge 0$ on $(0,\infty)$. So we have (iii).

We first prove (i)$\Rightarrow$(iv). Suppose that (iv) is false, and we need to show that (i) is false. Then $f_1^{(k)}(0)=0$ for $k<m$, and both polarizations have the
expansions
\begin{equation}\label{eq:strict-proper-taylor}
 g_j(\lambda)=\sum_{k=m}^d
 \frac{f_j^{(k)}(0)}{k!\binom nk}\sigma_k(\lambda),
 \qquad j=1,2.
\end{equation}
Choose $\lambda=(\lambda_1,\ldots,\lambda_m,0,\ldots,0)$ with all $\lambda_i>0$. Since there are only $m$ non-zero terms in $\lambda$, $\sigma_{k}(\lambda)=0$ for all $k>m$. By the right-Noetherian condition, $f_2^{(m)}(0)>0$. Thus $g_2(\lambda)>0$. Moreover, for all $t>r_2=0$, $t\mathbf{1}\in \Upsilon_{g_2}$. Then $\lambda+t\mathbf{1}\in\Upsilon_{g_2}$ since $\Upsilon_{g_2}$ passes PRT.
So $\lambda$ is in the Fang-Ma-G{\aa}rding connected component $\Upsilon_{g_2}$ of $\{g_2>0\}$.
Note that only the $k=m$ term survives
in \eqref{eq:strict-proper-taylor} because $\sigma_{k}(\lambda)=0$ for all $k>m$. So $F(\lambda)=f_1^{(m)}(0)/f_2^{(m)}(0)$ is independent of each $\lambda_i$. By taking the derivative in the $\lambda_i$ direction for $i=1,\ldots,m$, we see that (i) is not true.

We next prove (iii)$\Rightarrow$(i). Let $0< \varepsilon <- \frac{f_1^{(q)}(0)}{q!\binom{n}{q}}$.
We claim that
\begin{equation}\label{eq:strict-proper-perturbation}
 \widetilde f_1=f_1+\varepsilon \binom{n}{q} x^q
 \quad\text{is right-Noetherian and satisfies}\quad
 f_2\prec\widetilde f_1.
\end{equation}
Note that for $k>q$, the derivative $\tilde f_1^{(k)}=f_1^{(k)}$ is unchanged. Moreover, for $k\le q<m$, first note that $f_2^{(k)}$ has a root at $0$, so $r(f_2^{(k)})\ge r(f_2)=0$. This implies that $r(f_2^{(k)})=0$ by the right-Noetherian condition. By Theorem \ref{PositiveUnivariable}, it suffices to show that for all $k=0,1,...,q\le d-1$, there is a unique positive root $R_k$ of $\tilde f_1^{(k)}$ such that $R_k\ge R_{k+1}=r(\tilde f_1^{(k+1)})$ unless $k=d-1$, and $\tilde f_1^{(k)}(x)<0$ for all $x \in (0,R_k)$.

For $k=q$,
$\tilde f_1^{(q)}(0)=f_1^{(q)}(0)+\varepsilon \binom{n}{q}q!<0$. If $f_1^{(q)}$ has degree at least two, and $r(f_1^{(q+1)})>0$, then by Theorem \ref{PositiveUnivariable} applied to $f_2\prec f_1$, we see that
$f_1^{(q+1)}\le0$ on $[0,r(f_1^{(q+1)})]\subset[r(f_2^{(q+1)}),r(f_1^{(q+1)})]$.
Also, $f_1^{(q+1)}>0$ on $(r(f_1^{(q+1)}),\infty)$. Therefore $\tilde f_1^{(q)}=f_1^{(q)}+\varepsilon \binom{n}{q}q!$ has a unique positive root $R_q>r(f_1^{(q+1)})=R_{q+1}$ and is negative on
$[0,R_q)$. If $f_1^{(q)}$ has degree at least two, and $r(f_1^{(q+1)})\le 0$, then $\tilde f_1^{(q)}$ is increasing on $[0,\infty)$. Since $\tilde f_1^{(q)}(0)<0$, it has a unique positive root $R_{q}>0\ge R_{q+1}$, and $f_1^{(q)}<0$ on $[0,R_q)$. If $f_1^{(q)}$ is linear, $f_1^{(q+1)}\equiv const>0$ on $(-\infty,+\infty)$. The same conclusion holds. This verifies the required condition for $k=q$.

For $k<q$, by induction, we assume that the condition holds for $k+1$. Then $R_{k+1}>0$, and $\tilde f_1^{(k+1)}$ is negative on $(0,R_{k+1})$ and positive on $(R_{k+1},\infty)$. Since $\tilde f_1^{(k)}(0)=0$, we see that $\tilde f_1^{(k)}$ is
negative on $(0,R_{k+1}]$, strictly increasing thereafter,
and has a unique positive root $R_k>R_{k+1}$. It is therefore negative on $(0,R_k)$. This proves
\eqref{eq:strict-proper-perturbation}.

Next, using $r(f_2^{(k)})=0$ for $k<m$, we see that $x^m\vartriangleleft f_2$ by Proposition \ref{DominanceUnivariable}. Then by Theorem \ref{DominancePreserve}, $\sigma_m\vartriangleleft g_2$. So Proposition \ref{Dominance10.11} implies that \[
\frac{g_2\partial_i \sigma_m-\sigma_m\partial_i g_2}{g_2^2}=\partial_i(\frac{\sigma_m}{g_2})\le 0
\]
on $\Upsilon_{g_2}\subset\Upsilon_{\sigma_m}=\Gamma_m=\{\sigma_1>0,\ldots,\sigma_m>0\}$.
Recall that the classical Newton inequalities give
$\partial_i(\sigma_q/\sigma_m)<0$ on $\Gamma_m$ whenever $q<m$. For example, see Theorem 2.16 of \cite{SpruckGeometricAspects}. Now $\sigma_q/g_2=(\sigma_q/\sigma_m)(\sigma_m/g_2)$ is a product of two positive functions, whose $\partial_i$ derivatives are respectively negative and nonpositive. This proves
\[
 \partial_i\left(\frac{\sigma_q}{g_2}\right)<0
 \quad\text{on }\Upsilon_{g_2},\qquad i=1,\ldots,n.
\]

Finally, by \eqref{eq:strict-proper-perturbation} and Corollary \ref{polarizationproperposition}, $g_2\prec \operatorname{Pol}_n\widetilde f_1=g_1+\varepsilon \sigma_q$. Then Theorem \ref{PositiveDerivative}
gives
$\partial_i(g_1/g_2)\ge-\varepsilon\partial_i(\sigma_q/g_2)>0$
on $\Upsilon_{g_2}$. This proves (i).
\end{proof}

\begin{remark}
\label{strict-prec-1d}
If $f_2\prec f_1$ strictly, then $(\frac{f_1}{f_2})'>0$ on $(r(f_2),\infty)$. However, the counterexample $f_2(x)=x(x+1)$ and $f_1(x)=x(x-1)$ shows that the converse fails.
\end{remark}

We also need the following stronger version of strict proper position:
\begin{defn}
\label{SSPrec}
Assume that $f_2,f_1$ are univariate monic right-Noetherian polynomials of the same degree $d$. Then we say that $f_2\prec f_1$ strongly strictly if $f_2^{(k)}\prec f_1^{(k)}$ strictly for all $k=0,1,\ldots,d-1$.
\end{defn}

This definition is useful to prove the following:
\begin{lem}
\label{Linear-SSPrec}
Assume that $f_2,f_1$ are univariate monic right-Noetherian polynomials of the same degree $d$, and $f_2\prec f_1$ strongly strictly. Then for any $C>-1$, the linear combination $f_C=f_1+Cf_2$ is strongly strictly right-Noetherian, and $f_2\prec f_C$ strongly strictly.
\end{lem}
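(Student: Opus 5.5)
The plan is to work one derivative order at a time and reduce everything to the behaviour of the quotients
\[
h_k:=\frac{f_1^{(k)}}{f_2^{(k)}}\quad\text{on }(r(f_2^{(k)}),\infty),\qquad k=0,1,\ldots,d-1 .
\]
Since $f_2^{(k)}\prec f_1^{(k)}$ strictly for every $k$, Remark \ref{strict-prec-1d} gives $h_k'>0$ on this interval. Condition (ii) of Proposition \ref{Strict proper position}, applied to the pair $(f_2^{(k)},f_1^{(k)})$, gives $h_k\to-\infty$ as $x\downarrow r(f_2^{(k)})$. Because both polynomials are monic of the same degree, $h_k\to1$ as $x\to\infty$. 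Writing
\[
f_C^{(k)}=f_2^{(k)}\,(h_k+C),
\]
we see that $h_k+C$ increases strictly from $-\infty$ to $1+C>0$. Hence $f_C^{(k)}$ has exactly one root $\rho_k$ in $(r(f_2^{(k)}),\infty)$. Moreover $f_C^{(k)}<0$ on $(r(f_2^{(k)}),\rho_k)$ and $f_C^{(k)}>0$ on $(\rho_k,\infty)$. In particular $\rho_k=r(f_C^{(k)})$ is the largest real root, and $\rho_k$ is a simple sign change of $f_C^{(k)}$. (We divide by $1+C>0$ if we want $f_C$ to be monic; this changes none of the roots or signs.)

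Next I prove that the roots strictly decrease, $\rho_k>\rho_{k+1}$ for $k\le d-2$, by excluding the two other cases.
\begin{itemize}
\item \emph{Case $\rho_k<\rho_{k+1}$.} By the right-Noetherian property of $f_2$, $\rho_k>r(f_2^{(k)})\ge r(f_2^{(k+1)})$, so $[\rho_k,\rho_{k+1})\subset(r(f_2^{(k+1)}),\rho_{k+1})$. On that interval $f_C^{(k+1)}=(f_C^{(k)})'<0$. So $f_C^{(k)}$ strictly decreases from $f_C^{(k)}(\rho_k)=0$, which makes it negative just to the right of $\rho_k$. This contradicts $f_C^{(k)}>0$ on $(\rho_k,\infty)$.
\item \emph{Case $\rho_k=\rho_{k+1}=:\rho$.} Then $\rho$ is a root of $f_C^{(k)}$ of multiplicity at least $2$. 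Since $f_C^{(k)}$ changes sign at $\rho$, the multiplicity is odd, hence at least $3$. Then $\rho$ is a root of $f_C^{(k+1)}$ of even multiplicity. This contradicts the fact, from the first paragraph applied at order $k+1$, that $f_C^{(k+1)}$ changes sign at $\rho_{k+1}=\rho$.
\end{itemize}
The case $k=d-1$ is linear and trivial. This shows that $f_C$ is strongly strictly right-Noetherian; in particular $f_C$ and all $f_C^{(k)}$ lie in $\Ga_1$ by Lemma \ref{FangMaLinCorrespondence}.

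For the proper position I would apply Theorem \ref{PositiveUnivariable} to each pair $(f_2^{(k)},f_C^{(k)})$, using that the $j$-th derivative of $f_C^{(k)}$ is $f_C^{(k+j)}$. We need $r(f_2^{(k+j)})\le\rho_{k+j}$, and $f_C^{(k+j)}\le0$ on $[r(f_2^{(k+j)}),\rho_{k+j})$. Both follow from the first paragraph. At the endpoint $x=r(f_2^{(k+j)})$, the value $f_C^{(k+j)}=f_1^{(k+j)}\le0$ comes from $f_2\prec f_1$ via the same theorem. Thus $f_2^{(k)}\prec f_C^{(k)}$. Strictness then follows from criterion (ii) of Proposition \ref{Strict proper position}, since
\[
\frac{f_C^{(k)}}{f_2^{(k)}}=h_k+C\to-\infty\quad\text{as }x\downarrow r(f_2^{(k)}).
\]

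The main obstacle is the strict ordering $\rho_k>\rho_{k+1}$, specifically excluding a coincident root. The sign-change and multiplicity-parity argument above is what I would use for that step.
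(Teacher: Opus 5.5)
Your proof is correct. Its skeleton matches the paper's: both write $f_C^{(k)}=f_2^{(k)}(h_k+C)$ with $h_k$ strictly increasing from $-\infty$ to $1$, identify $r(f_C^{(k)})$ as the unique point where $h_k=-C$, and obtain strictness from criterion (ii).

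The genuine difference is the step you flag as the main obstacle, $\rho_k>\rho_{k+1}$. The paper differentiates the factorization,
\[
f_C^{(k+1)}=h_k'\,f_2^{(k)}+(h_k+C)\,f_2^{(k+1)},
\]
and observes what happens on $[\rho_k,\infty)$. The first term is strictly positive there, since $h_k'>0$ and $f_2^{(k)}>0$. The second term is nonnegative, since $h_k+C\ge0$ and $f_2^{(k+1)}>0$ because $\rho_k>r(f_2^{(k)})\ge r(f_2^{(k+1)})$. Hence $f_C^{(k+1)}>0$ on all of $[\rho_k,\infty)$. This gives $\rho_{k+1}<\rho_k$ in one line, and also shows that $\rho_k$ is a simple root of $f_C^{(k)}$.

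Your route reaches the same conclusion using only the sign patterns of $f_C^{(k)}$ and $f_C^{(k+1)}$. You exclude $\rho_k<\rho_{k+1}$ by monotonicity and $\rho_k=\rho_{k+1}$ by multiplicity parity. The cost is two separate arguments, plus the order-$(k+1)$ sign information at this step. The paper's identity excludes both cases at once via $f_C^{(k+1)}(\rho_k)=h_k'(\rho_k)f_2^{(k)}(\rho_k)>0$, together with positivity on the whole ray.

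In the other direction, your final paragraph checks the hypotheses of Theorem \ref{PositiveUnivariable} for each pair $(f_2^{(k)},f_C^{(k)})$ before invoking (ii). That makes explicit a step the paper compresses into the single sentence ``so $f_2\prec f_C$ strongly strictly.''
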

\begin{proof}
For any $k=0,\ldots,d-1$, the function $(\frac{f_1^{(k)}}{f_2^{(k)}})'>0$ on $(r(f_2^{(k)}),\infty)$. So $\frac{f_1^{(k)}}{f_2^{(k)}}$ strictly increases from $-\infty$ to $1$. It passes $0$ at $r(f_1^{(k)})$ and passes $-C$ at the largest real root $r(f_C^{(k)})$. In particular, $r(f_2^{(k)})<r(f_C^{(k)})$. Now we take the derivative of $f_C^{(k)}=(\frac{f_C^{(k)}}{f_2^{(k)}})f_2^{(k)}$, and get
\[f_C^{(k+1)}=(\frac{f_C^{(k)}}{f_2^{(k)}})'f_2^{(k)}+(\frac{f_C^{(k)}}{f_2^{(k)}})f_2^{(k+1)}.\]
The term $(\frac{f_C^{(k)}}{f_2^{(k)}})'=(\frac{f_1^{(k)}}{f_2^{(k)}})'>0$ on $(r(f_2^{(k)}),\infty)$. So $f_C^{(k+1)}>0$ on $[r(f_C^{(k)}),\infty)$. This implies that $r(f_C^{(k+1)}) < r(f_C^{(k)})$ for all $k=0,\ldots,d-2$. So $f_C$ is strongly strictly right-Noetherian. Moreover, the function $\frac{f_C^{(k)}}{f_2^{(k)}}=\frac{f_1^{(k)}}{f_2^{(k)}}+C$ strictly increases from $-\infty$ to $1+C$. So $f_2\prec f_C$ strongly strictly.
\end{proof}
We will use a similar argument for a different perturbation later.

\section{Volume comparison}
In Chu and Lee's paper \cite{ChuLeeHypercriticalDHYM}, they used the maximum principle to obtain the bound on $\frac{g_1(\chi_\varphi)}{g_2(\chi_\varphi)}$ along the flow. This implies that the geometric volume form $g_2(\chi_\varphi)$ is comparable to the traditional volume form $\chi_{\varphi}^n$ in their case by a quite trivial calculation. This section proves the same result under the strongly strict proper position condition.

\begin{prop}\label{prop:comparison}
Define
\[
 f_i(x)=\sum_{k=0}^n c_k^{(i)}\binom nk x^k,
 \qquad g_i(\lambda)=\sum_{k=0}^n c_k^{(i)}\sigma_k(\lambda),
 \qquad c_n^{(i)}=1,
\]
and
\[
 g_i(\chi_\varphi)=\sum_{k=0}^n c_k^{(i)}\binom nk
       \chi_\varphi^k\wedge\omega^{n-k},
 \qquad \chi_\varphi=\chi+\ddc\varphi,\quad i=1,2.
\]
Assume that $f_2\prec f_1$ strongly strictly as in Definition \ref{SSPrec}, and $c_{n-1}^{(2)}=-1$. Then for every $C>0$, there exists $\kappa_C>0$ such that
\[
 \kappa_C\chi_\varphi^n\le g_2(\chi_\varphi)
       \le\kappa_C^{-1}\chi_\varphi^n
\]
for every $\varphi\in\HH$ satisfying
$\frac{g_1(\chi_\varphi)}{g_2(\chi_\varphi)}\ge -C$.
\end{prop}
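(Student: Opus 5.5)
The statement is pointwise in nature. At a point of $X$, let $\lambda=\lambda(\chi_\varphi)$ be the eigenvalues of $\chi_\varphi$ with respect to $\omega$. Then $g_2(\chi_\varphi)/\omega^n=g_2(\lambda)$ and $\chi_\varphi^n/\omega^n=\sigma_n(\lambda)$. It therefore suffices to prove the purely algebraic fact
\[
\kappa_C\le \frac{g_2(\lambda)}{\sigma_n(\lambda)}\le \kappa_C^{-1}
\qquad\text{on } S_C:=\Big\{\lambda\in\Upsilon_{g_2}:\ \tfrac{g_1}{g_2}(\lambda)\ge -C\Big\}.
\]

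\textbf{Step 1: identify $S_C$.} Put $f_C=f_1+Cf_2$ and $g_C=\Pol_n f_C=g_1+Cg_2$. Since $C>0>-1$, Lemma \ref{Linear-SSPrec} shows that $f_C$ is strongly strictly right-Noetherian and $f_2\prec f_C$ strongly strictly. By Corollary \ref{polarizationproperposition} and Theorem \ref{PositiveDerivative}(2), $\Upsilon_{g_2}\cap\{g_C>0\}=\Upsilon_{g_C}$. Hence $S_C\subset\overline{\Upsilon_{g_C}}\cap\Upsilon_{g_2}$.

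\textbf{Step 2: a uniform positive lower bound on the eigenvalues.} Taking $\alpha$ to be any $(n-1)$-fold derivative omitting the index $i$, we get $\partial^\alpha g_C=c\,(\lambda_i-a_C)$ for a constant $c>0$, where $a_C=r(f_C^{(n-1)})$. The nesting property in Definition \ref{FMGdefinition} then gives $\Upsilon_{g_C}\subset\{\lambda_i>a_C\}$ for all $i$. The normalization $c^{(2)}_{n-1}=-1$ makes $r(f_2^{(n-1)})>0$, and strong strict proper position of the $(n-1)$-st derivatives gives $a_C>r(f_2^{(n-1)})>0$. Thus $\lambda_i\ge a:=a_C>0$ on $S_C$; in particular $S_C\subset\Gamma_n^+$ and $\sigma_n>0$ there.

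\textbf{Step 3: the upper bound.} Write $\mu_i=1/\lambda_i\in(0,1/a]$. Then
\[
h_2(\mu):=\frac{g_2(\lambda)}{\sigma_n(\lambda)}=\sum_k c^{(2)}_k\sigma_{n-k}(\mu),
\]
which is a polynomial on the compact box $[0,1/a]^n$ and hence bounded above. This gives $g_2\le\kappa^{-1}\sigma_n$.

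\textbf{Step 4: the lower bound (main obstacle).} I would argue by contradiction and compactness in the $\mu$-variables. Take $\lambda^{(j)}\in S_C$ with $h_2(\mu^{(j)})\to0$, and pass to a limit $\mu^*\in[0,1/a]^n$. Let $I=\{i:\mu^*_i=0\}$, i.e. the coordinates with $\lambda_i\to\infty$. Dividing $g_2$ and $g_C$ by $\prod_{i\in I}\lambda_i$ and letting $j\to\infty$, we find that $h_2(\mu^*)$ and $h_C(\mu^*)$ equal, up to the positive factor $\prod_{i\notin I}\mu^*_i$, the values $\partial^\alpha g_2(\lambda')$ and $\partial^\alpha g_C(\lambda')$. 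Here $\alpha=\sum_{i\in I}e_i$ and $\lambda'=(\lambda^*_i)_{i\notin I}$ is finite with all entries $\ge a$. When $|\alpha|=k$, $\partial^\alpha g$ is (a multiple of) the polarization of $f^{(k)}$ in $n-k$ variables.

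We thus obtain $\partial^\alpha g_2(\lambda')=0$ and $\partial^\alpha g_C(\lambda')\ge0$, with $\lambda'$ a limit of points of $\Upsilon_{\partial^\alpha g_C}\subset\Upsilon_{\partial^\alpha g_2}$. This uses the nesting, and the fact that $\lambda$ lies in the Fang–Ma–Gårding components, which persists along the rescaling. So $\lambda'\in\partial\Upsilon_{\partial^\alpha g_2}$.

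Now apply Proposition \ref{Strict proper position}(i),(ii) to the strictly proper pair $(f_2^{(k)},f_C^{(k)})$, as Lemma \ref{Linear-SSPrec} provides. The quotient $\partial^\alpha g_C/\partial^\alpha g_2$ is strictly increasing in every direction and tends to $-\infty$ at the boundary of $\Upsilon_{\partial^\alpha g_2}$. Note that (ii) is the univariate statement along the diagonal, and I would transfer it along the ray $\lambda'+t\mathbf 1$ using the monotonicity (i). This contradicts $\partial^\alpha g_C\ge0$ near $\lambda'$. The case $I=\emptyset$ is the same argument with $\alpha=0$.

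The delicate point is justifying this boundary blow-up for arbitrary boundary points in the multivariate setting, together with the bookkeeping of the rescaled limits. If the ray argument is insufficient, I would instead use the inclusion $\overline{\Upsilon_{\partial^\alpha g_C}}\subset\Upsilon_{\partial^\alpha g_2}$, which should follow from $r(f_2^{(k)})<r(f_C^{(k)})$ via Step 1's identity $\Upsilon_{\partial^\alpha g_C}=\Upsilon_{\partial^\alpha g_2}\cap\{\partial^\alpha g_C>0\}$.
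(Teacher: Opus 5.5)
\paragraph{Genuine gap: the boundary exclusion in Step 4 is not proved.} Steps 1--3 are fine. The rescaling in Step 4 is also set up correctly. Dividing by $\prod_{i\in I}\lambda_i$ produces a point $\lambda'$, in the variables indexed by the complement of $I$, with $\lambda'\in\overline{\Upsilon_{\partial^\alpha g_C}}$ and $\partial^\alpha g_2(\lambda')=0$. And $\partial^\alpha g_2$, $\partial^\alpha g_C$ are positive multiples of $\Pol_{n-k}f_2^{(k)}$, $\Pol_{n-k}f_C^{(k)}$.

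But the fact you then need, $\overline{\Upsilon_{\partial^\alpha g_C}}\cap\partial\Upsilon_{\partial^\alpha g_2}=\emptyset$, is the whole content of the lower bound, and you do not prove it.

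\begin{itemize}
\item Route A does not work as described. Proposition \ref{Strict proper position}(ii) gives the blow-up of $Q=\partial^\alpha g_C/\partial^\alpha g_2$ only along the diagonal. The monotonicity (i) bounds $Q(\lambda'+t\mathbf 1)$ from above only by values of $Q$ at points that dominate $\lambda'+t\mathbf 1$ componentwise. A diagonal point $x\mathbf 1$ does so only if $x\ge\max_i\lambda'_i+t$. So you only get $Q(\lambda'+t\mathbf 1)\le Q\bigl((\max_i\lambda'_i+t)\mathbf 1\bigr)$, which stays bounded as $t\downarrow0$ whenever $\max_i\lambda'_i>r(f_2^{(k)})$. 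The diagonal information never reaches a general boundary point.
\item Route B is only asserted. The identity $\Upsilon_{\partial^\alpha g_C}=\Upsilon_{\partial^\alpha g_2}\cap\{\partial^\alpha g_C>0\}$ says nothing about closure points lying on $\partial\Upsilon_{\partial^\alpha g_2}$. The root gap $r(f_2^{(k)})<r(f_C^{(k)})$ is univariate information, and turning it into a statement about the whole cone is exactly the nontrivial step.
\end{itemize}

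This step is where strong strictness is used. Take $n=2$, $f_2=(x-1)^2$, $f_1=x(x-2)$, which is strictly but not strongly strictly proper. The points $\bigl(1+\epsilon,1+\frac{1}{(1+C)\epsilon}\bigr)$ lie in $S_C$, yet $g_2/\sigma_2\to0$, so the lower bound fails.

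\paragraph{The missing idea: shifted dominance.} The paper converts the root gaps into a uniform translation of the cones via dominance. Set $\delta=\frac12\min_{0\le k<n}\bigl(r(f_{C+1}^{(k)})-r(f_2^{(k)})\bigr)>0$.
\begin{itemize}
\item Proposition \ref{DominanceUnivariable} gives $f_2(x-\delta)\triangleleft f_{C+1}(x)$.
\item Theorem \ref{DominancePreserve} upgrades this to $g_2(\lambda-\delta\mathbf 1)\triangleleft g_{C+1}(\lambda)$, that is, $\Upsilon_{\partial^\alpha g_{C+1}}\subset\delta\mathbf 1+\Upsilon_{\partial^\alpha g_2}$ for every $\alpha$.
\item For an open set $\Upsilon$ passing PRT, $\delta\mathbf 1+\overline{\Upsilon}\subset\Upsilon$.
\end{itemize}
Together these close your Step 4, and the same argument works with $C$ in place of $C+1$.

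The shift also makes your compactness argument unnecessary. The paper uses only $\alpha=0$ and writes each $\lambda\in S_C$ as $\mu+\delta\mathbf 1$ with $\mu\in\Upsilon_{g_2}$. Since $t\mapsto g_2(\mu+t\mathbf 1)$ has positive coefficients, for a fixed $A>r(f_2)$ it gets
$g_2(\lambda)\ge\bigl(\frac{\delta}{\delta+A}\bigr)^n g_2(\lambda+A\mathbf 1)\ge\bigl(\frac{\delta}{\delta+A}\bigr)^n\prod_i\lambda_i$.
This gives an explicit constant.
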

\begin{proof}
We prove a pointwise polynomial estimate. Write
$F=g_1/g_2$ and $\mathbf{1}=(1,\ldots,1)$. For any $C>0$, by Lemma \ref{Linear-SSPrec}, $f_{C+1}=f_1+(C+1)f_2$ is strongly strictly right-Noetherian and $f_2\prec f_{C+1}$ strongly strictly. Let $g_{C+1}$ be the polarization of $f_{C+1}$; then $g_2\prec g_{C+1}$. So
\[\{F \ge -C\}\cap\Upsilon_{g_2}\subset \{F > -C-1\}\cap\Upsilon_{g_2}=\Upsilon_{g_{C+1}}.\]

For $0\le k<n$, since $f_2\prec f_{C+1}$ strongly strictly,
$r(f_{C+1}^{(k)})>r(f_2^{(k)})$. Choose
\[
\delta=\frac{1}{2}\min_{0\le k<n}(r(f_{C+1}^{(k)})-r(f_2^{(k)}))>0.
\]
The polynomial $f_2(x-\delta)$ is monic and right-Noetherian, and its $k$-th derivative has largest root $r(f_2^{(k)})+\delta<r(f_{C+1}^{(k)})$.
By Proposition \ref{DominanceUnivariable}, $f_2(x-\delta) \triangleleft f_{C+1}(x)$. So $g_2(\lambda-\delta\mathbf{1}) \triangleleft g_{C+1}(\lambda)$ by Theorem \ref{DominancePreserve}. By Definition \ref{Dominance}, we have \[
 \{\lambda\in\Upsilon_{g_2}:F(\lambda)\ge-C\}
 \subset\Upsilon_{g_{C+1}} \subset \Upsilon_{g_2(\lambda-\delta\mathbf{1})}
 =\delta\mathbf{1}+\Upsilon_{g_2}.
\]

By our assumption, $r(f_2)\ge r(f_2^{(n-1)})=1$. Fix $A>r(f_2)$. For any $\mu\in\Upsilon_{g_2}$, let $\lambda=\mu+\delta\mathbf{1}$.
Consider the polynomial
\[q_\mu(t):=g_2(\mu+t\mathbf{1})=\sum_{k=0}^{n} q_k t^k.\]
Then by Definition \ref{FMGdefinition}(2), for all $0\le k \le n$, \[q_k=\frac{1}{k!}\frac{d^k}{dt^k}|_{t=0}g_2(\mu+t\mathbf{1}) > 0.\] 
So
\begin{equation}\label{eq:comparison-translation}
 g_2(\lambda+A\mathbf{1}) = \sum_{k=0}^{n} q_k (\delta+A)^k \le \left(\frac{\delta+A}{\delta}\right)^n\sum_{k=0}^{n} q_k \delta^k
 =\left(\frac{\delta+A}{\delta}\right)^n g_2(\lambda).
\end{equation}
On the other hand, \begin{equation}\label{eq:comparison-positive-coefficients}
 g_2(\lambda+A\mathbf{1})=\Pol_n f_2(x+A)=\Pol_n \sum_{k=0}^n\frac{f_2^{(k)}(A)}{k!}x^k=\sum_{k=0}^n a_k\sigma_k(\lambda),
 \quad a_k=\frac{f_2^{(k)}(A)}{k!\binom nk}>0.
\end{equation}
The positivity follows from right-Noetherianity and
$A>r(f_2)\ge r(f_2^{(k)})$ for $k<n$, while $a_n=1$.
Since $c_n^{(i)}=1$ and $c_{n-1}^{(2)}=-1$, by Definition \ref{FMGdefinition}(2), 
\begin{equation}
\label{eq:denominator-cone-normalization}
\Upsilon_{g_2}\subset\bigcap_{|\alpha|=n-1}\Upsilon_{\partial^\alpha g_2}=\bigcap_{i=1}^{n}\{\lambda_i>1\}.
\end{equation}
Since $a_n=1$ and all the other coefficients
in \eqref{eq:comparison-positive-coefficients} are positive,
$g_2(\lambda+A\mathbf{1})\ge\prod_{i=1}^n\lambda_i$.
Combining this with \eqref{eq:comparison-translation}, we obtain
\[
 \left(\frac{\delta}{\delta+A}\right)^n
       \prod_{i=1}^n\lambda_i
 \le g_2(\lambda) = \sum_{k=0}^n c_k^{(2)}\sigma_{k}(\lambda)
 \le \sum_{k=0}^n\binom nk|c_k^{(2)}|\prod_{i=1}^n\lambda_i
\]
for all $\lambda\in\Upsilon_{g_2}$ such that $\frac{g_1(\lambda)}{g_2(\lambda)}\ge -C$. We have used $\sigma_k(\lambda)\le \binom{n}{k}\prod_{i=1}^n\lambda_i$ since $\lambda_i>1$. This proves the required bound.
\end{proof}

\section{A compatible perturbation of the polynomial pair}
\label{sec:polynomial-perturbation}
We verify the hypotheses needed to apply the preceding algebraic
results and the subsequent analytic arguments to a perturbed pair
$(f_3,f_4)$. Throughout this section, assume $V_1=0$, as in
Theorem~\ref{thm:main}, and write $\HH_2=\HH$ for the original
admissible space. Choose $\epsilon_4>0$ sufficiently small that
$1+\epsilon_4 F_\omega(\chi)>0$ everywhere on $X$, and then choose
$\epsilon_3>0$ sufficiently small depending on this fixed $\epsilon_4$.
Define
\begin{equation}\label{eq:perturbed-polynomial-pair}
 f_3(x)=f_1(x)-\epsilon_3 f_2(x)+\epsilon_3 v_2,
 \qquad
 f_4(x)=f_2(x)+\epsilon_4 f_1(x),
 \qquad
 v_2=\frac{V_2}{\int_X\omega^n}>0.
\end{equation}
We always take $\epsilon_3<1$, so both polynomials are positive multiples of monic polynomials of degree $n$. Put $g_i=\operatorname{Pol}_n f_i$ for $i=3,4$, and
define $\HH_4$ by replacing $g_2$ with $g_4$ in the definition of
$\HH$. The order of the two small parameters is essential:
$\epsilon_4$ is fixed before $\epsilon_3$ is chosen.

\begin{lem}\label{lem:f4-position}
The polynomial $f_4$ is strongly strictly right-Noetherian. Moreover,
$f_2\idealprec f_4\idealprec f_1$, and $f_2\prec f_4\prec f_1$ strongly strictly.
The admissible spaces satisfy
\begin{equation}\label{eq:H4-subset-H2}
 \HH_4=\{\varphi\in\HH_2:
               1+\epsilon_4 F_\omega(\chi_\varphi)>0\}
 \subseteq\HH_2,
 \qquad 0\in\HH_4.
\end{equation}
\end{lem}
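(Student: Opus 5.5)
The plan is to reduce everything to the one-variable ratio $h_k:=f_1^{(k)}/f_2^{(k)}$ on $(r(f_2^{(k)}),\infty)$, for $k=0,\ldots,n-1$. By the standing hypothesis that $f_2\prec f_1$ strongly strictly, together with Remark~\ref{strict-prec-1d} and the proof of Lemma~\ref{Linear-SSPrec}, each $h_k$ increases strictly from $-\infty$ to $1$ on this interval. By the hypothesis $f_2\idealprec f_1$ and Corollary~\ref{IdealProperUnivariable}, each $h_k$ is also concave there. Scaling by positive constants does not affect any of the relations involved, since they depend only on the sets $\{\partial^\alpha p>0\}$ and their components. So I will work with the monic polynomial $\widehat f_4:=f_1+\epsilon_4^{-1}f_2=\epsilon_4^{-1}f_4$ whenever monicity is needed.

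\emph{Step 1: $f_4$ and the relation with $f_2$.} Lemma~\ref{Linear-SSPrec} with $C=\epsilon_4^{-1}>-1$ gives directly that $\widehat f_4$, hence $f_4$, is strongly strictly right-Noetherian, and that $f_2\prec f_4$ strongly strictly. For the ideal relation $f_2\idealprec f_4$, I apply Corollary~\ref{IdealProperUnivariable}. The ratio is
\[
f_4^{(k)}/f_2^{(k)}=1+\epsilon_4 h_k,
\]
which is concave on $(r(f_2^{(k)}),\infty)$ because $h_k$ is. Moreover $\Ga_1=\IG_1$ by Lemma~\ref{FangMaLinCorrespondence}.

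\emph{Step 2: $f_4\prec f_1$, strongly strictly and ideally.} On $(r(f_4^{(k)}),\infty)$ we have $1+\epsilon_4h_k>0$, because $r(f_4^{(k)})$ is exactly where $h_k$ crosses $-\epsilon_4^{-1}$. There
\[
\frac{f_1^{(k)}}{f_4^{(k)}}=\phi(h_k),\qquad \phi(s)=\frac{s}{1+\epsilon_4 s},
\]
and $\phi$ is strictly increasing and concave on $(-\epsilon_4^{-1},\infty)$, with $\phi(s)\to-\infty$ as $s\downarrow-\epsilon_4^{-1}$. Consequently $\phi(h_k)$ is strictly increasing and concave, tends to $-\infty$ at $r(f_4^{(k)})$, and has derivative $h_k'/(1+\epsilon_4h_k)^2>0$.

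Before applying strictness, I first establish the non-strict relation $f_4\prec f_1$ via Theorem~\ref{PositiveUnivariable}. Since $h_k$ is increasing and equals $-\epsilon_4^{-1}<0$ at $r(f_4^{(k)})$ and $0$ at $r(f_1^{(k)})$, we get $r(f_4^{(k)})<r(f_1^{(k)})$. On the interval between them $h_k<0$ and $f_2^{(k)}>0$, so $f_1^{(k)}\le 0$ there.

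With $f_4\prec f_1$ in hand, condition (ii) of Proposition~\ref{Strict proper position}, applied to the pair $f_4^{(k)},f_1^{(k)}$, follows from the limit $-\infty$. This gives strict position for every $k$, i.e.\ $f_4\prec f_1$ strongly strictly. Concavity of $\phi(h_k)$ plus Corollary~\ref{IdealProperUnivariable} then gives $f_4\idealprec f_1$. I expect this step to be the main obstacle: one must check carefully that the non-strict relation $f_4\prec f_1$ holds for every derivative before strictness can be invoked, and keep track of which interval each ratio lives on.

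\emph{Step 3: the admissible spaces.} By Corollary~\ref{polarizationproperposition}, $g_2\prec g_4$. Theorem~\ref{PositiveDerivative}(2) then gives
\[
\Upsilon_{g_4}=\Upsilon_{g_2}\cap\{g_4>0\}.
\]
On $\Upsilon_{g_2}$ we have $g_4/g_2=1+\epsilon_4 F$, so $g_4>0$ there exactly when $1+\epsilon_4F>0$. Applying this pointwise to the eigenvalues of $\chi_\varphi$ with respect to $\omega$ yields~\eqref{eq:H4-subset-H2}. Finally, $0\in\HH_2$ by assumption and $1+\epsilon_4F_\omega(\chi)>0$ by the choice of $\epsilon_4$, so $0\in\HH_4$.
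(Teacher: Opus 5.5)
Your proposal is correct and follows essentially the paper's route: Lemma~\ref{Linear-SSPrec} together with concavity of $1+\epsilon_4h_k$ handles the $f_2$ side, and for the $f_1$ side you use the root interlacing $r(f_4^{(k)})<r(f_1^{(k)})$ with Theorem~\ref{PositiveUnivariable}, the blow-up of $f_1^{(k)}/f_4^{(k)}$ at $r(f_4^{(k)})$ for strictness, and concavity of $h_k/(1+\epsilon_4h_k)$ for $f_4\idealprec f_1$; your Step 3 via Theorem~\ref{PositiveDerivative}(2) spells out the description of $\HH_4$, which the paper leaves implicit. One small slip: $f_1+\epsilon_4^{-1}f_2$ has leading coefficient $1+\epsilon_4^{-1}$, so it is a positive multiple of a monic polynomial rather than monic, which is harmless given your scaling remark.
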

\begin{proof}
For $0\le k<n$, by Proposition~\ref{Strict proper position}, Remark \ref{strict-prec-1d} and Corollary~\ref{IdealProperUnivariable}, $\frac{f_1^{(k)}}{f_2^{(k)}}$ strictly increases from $-\infty$ at $r(f_2^{(k)})$ to $1$ at $\infty$ and is concave on $(r(f_2^{(k)}),\infty)$. So it passes $-\frac{1}{\epsilon_4}$ at $r(f_4^{(k)})$ and passes $0$ at $r(f_1^{(k)})$. So \begin{equation}\label{eq:f4-root-interlacing}
 r(f_2^{(k)})<r(f_4^{(k)})<r(f_1^{(k)}).
\end{equation}
By Lemma \ref{Linear-SSPrec}, $f_4=\epsilon_4(f_1(x)+\epsilon_4^{-1}f_2)$ is strongly strictly right-Noetherian and $f_2\prec f_4$ strongly strictly.
By Corollary~\ref{IdealProperUnivariable}, $f_2\idealprec f_4$.

By Theorem \ref{PositiveUnivariable}, $f_4\prec f_1$ by \eqref{eq:f4-root-interlacing} and $f_1^{(k)}\le 0$ on $[r(f_4^{(k)}),r(f_1^{(k)}))\subset [r(f_2^{(k)}),r(f_1^{(k)}))$. Since $r(f_4^{(k)})$ is a simple root of $f_4^{(k)}$ but is not a root of $f_1^{(k)}$, $f_4\prec f_1$ strongly strictly by Proposition \ref{Strict proper position}.
On $(r(f_4^{(k)}),\infty)$, we know that $1+\epsilon_4 (f_1^{(k)}/f_2^{(k)})=\frac{f_4^{(k)}}{f_2^{(k)}}>0$. By taking the derivative of
$\frac{f_1^{(k)}}{f_4^{(k)}}=\frac{f_1^{(k)}/f_2^{(k)}}{1+\epsilon_4 (f_1^{(k)}/f_2^{(k)})}$, we get
\begin{equation}\label{eq:f4-strict-quotient-increasing}
(\frac{f_1^{(k)}}{f_4^{(k)}})'=
       \frac{(f_1^{(k)}/f_2^{(k)})'}{(1+\epsilon_4 (f_1^{(k)}/f_2^{(k)}))^2}>0
       \end{equation}
and
\begin{equation}\label{eq:f4-strict-quotient-concavity}
 (\frac{f_1^{(k)}}{f_4^{(k)}})''=
       \frac{(f_1^{(k)}/f_2^{(k)})''}{(1+\epsilon_4 (f_1^{(k)}/f_2^{(k)}))^2}
       -\frac{2\epsilon_4((f_1^{(k)}/f_2^{(k)})')^2}{(1+\epsilon_4 (f_1^{(k)}/f_2^{(k)}))^3}<0.
\end{equation}
So $f_4\idealprec f_1$. The remaining part has been proved by Lemma \ref{Linear-SSPrec}.
\end{proof}

\begin{prop}\label{prop:f3-f4-hypotheses}
For every fixed $\epsilon_4$ as above, there exists
$\epsilon_{3,0}(\epsilon_4)>0$ such that, whenever
$0<\epsilon_3<\epsilon_{3,0}(\epsilon_4)$, $f_3$ is strongly strictly right-Noetherian, $f_4\idealprec f_3$, and $f_4\prec f_3$ strongly strictly.
\end{prop}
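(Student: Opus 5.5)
The plan is to reduce everything to the behaviour of the univariate quotients $h_k:=f_3^{(k)}/f_4^{(k)}$ on $(r(f_4^{(k)}),\infty)$, $0\le k<n$. For each $k$ I would show that $h_k$ tends to $-\infty$ at $r(f_4^{(k)})$, is strictly increasing and strictly concave, and has a positive limit at $\infty$. These are exactly the properties used in the proofs of Lemma \ref{Linear-SSPrec} and Lemma \ref{lem:f4-position}. After dividing $f_3$ by its leading coefficient $1-\epsilon_3>0$ (which changes none of these properties), the conclusions then follow from Proposition \ref{Strict proper position}, Theorem \ref{PositiveUnivariable} and Corollary \ref{IdealProperUnivariable}.

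\emph{Algebraic reduction.} Put $u_k=f_1^{(k)}/f_4^{(k)}$. By the proof of Lemma \ref{lem:f4-position}, $u_k$ is strictly increasing and strictly concave on $(r(f_4^{(k)}),\infty)$, by \eqref{eq:f4-strict-quotient-increasing} and \eqref{eq:f4-strict-quotient-concavity}. Its limit at $r(f_4^{(k)})$ is $-\infty$, because $f_1^{(k)}(r(f_4^{(k)}))<0$ and the root of $f_4^{(k)}$ there is simple; its limit at $\infty$ is $1/(1+\epsilon_4)$. Using $f_2=f_4-\epsilon_4 f_1$, we rewrite
\[
f_3=(1+\epsilon_3\epsilon_4)f_1-\epsilon_3 f_4+\epsilon_3 v_2 .
\]
Hence, for $k\ge1$,
\[
h_k=(1+\epsilon_3\epsilon_4)u_k-\epsilon_3,
\]
a positive multiple of $u_k$ plus a constant. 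All the required properties then hold for every $\epsilon_3>0$, and the limit at $\infty$ is $\frac{1+\epsilon_3\epsilon_4}{1+\epsilon_4}-\epsilon_3>0$ once $\epsilon_3$ is small. Only $k=0$ is delicate, since there
\[
h_0=(1+\epsilon_3\epsilon_4)u_0-\epsilon_3+\epsilon_3 v_2/f_4 ,
\]
and the last term is decreasing and convex.

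\emph{The case $k=0$, which I expect to be the main obstacle.} I would clear denominators. The functions $P_\epsilon:=f_4^2h_0'$ and $Q_\epsilon:=f_4^3h_0''$ are polynomials whose coefficients depend continuously on $\epsilon_3$. At $\epsilon_3=0$ we have $P_0=f_1'f_4-f_1f_4'$ and $Q_0=f_4^3u_0''$. At the endpoint $r_4=r(f_4)$ these equal $-f_1(r_4)f_4'(r_4)>0$ and $2f_1(r_4)f_4'(r_4)^2<0$ respectively, so $P_0>0$ and $Q_0<0$ on the closed half-line $[r_4,\infty)$. By continuity this sign persists on any compact $[r_4,R]$ for small $\epsilon_3$; the same computation with $f_1+\epsilon_3 v_2-\epsilon_3 f_2$ in place of $f_1$ also gives $h_0\to-\infty$ at $r_4$.

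For the tail I would use degrees. We have
\[
u_0-\frac{1}{1+\epsilon_4}=\frac{f_1-f_2}{(1+\epsilon_4)f_4}.
\]
Here $f_1-f_2$ has degree $j\le n-1$ and is nonzero, since otherwise $u_0$ would be constant. Hence $u_0'\sim c\,x^{j-n-1}$ with $c>0$, and $u_0''\sim -c'x^{j-n-2}$ with $c'>0$. The perturbation $\epsilon_3 v_2/f_4$ contributes derivatives of order $x^{-n-1}$ and $x^{-n-2}$, which are strictly smaller. This gives a uniform $R$ beyond which the signs hold for all $\epsilon_3\le1$. Combining the compact part with the tail yields $\epsilon_{3,0}(\epsilon_4)$.

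\emph{Conclusions.} Strong strict right-Noetherianity of $f_3$ follows as in Lemma \ref{Linear-SSPrec}. First, $h_k$ increases from $-\infty$ to a positive limit, so it has a unique zero, which is $r(f_3^{(k)})>r(f_4^{(k)})$. Then the identity
\[
f_3^{(k+1)}=h_k'f_4^{(k)}+h_kf_4^{(k+1)}
\]
shows $f_3^{(k+1)}>0$ on $[r(f_3^{(k)}),\infty)$, because $h_k'>0$, $h_k\ge0$ there, and $f_4^{(k)},f_4^{(k+1)}>0$ beyond $r(f_4^{(k)})$. Hence $r(f_3^{(k+1)})<r(f_3^{(k)})$. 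Monotonicity of $h_k$ gives $f_3^{(k)}<0$ on $(r(f_4^{(k)}),r(f_3^{(k)}))$, so $f_4\prec f_3$ by Theorem \ref{PositiveUnivariable}. Since $h_k\to-\infty$, condition (ii) of Proposition \ref{Strict proper position} holds for every $k$, which gives strong strictness. Finally, the concavity of each $h_k$ together with Corollary \ref{IdealProperUnivariable} gives $f_4\idealprec f_3$.
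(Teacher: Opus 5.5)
Your proof is correct and is essentially the paper's argument. You use the same identity $h_k=(1+\epsilon_3\epsilon_4)u_k-\epsilon_3$ for $k\ge1$. For $k=0$ you use the same domination of the term $\epsilon_3 v_2/f_4$ near $r_4$, at infinity, and by compactness in between. Your cleared-denominator polynomials $P_\epsilon,Q_\epsilon$ just package the paper's uniform ratio bound $K_{\epsilon_4}$ more tidily, and they also make explicit the monotonicity that the paper leaves as ``a similar argument''.

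One step needs tightening. The claim ``strictly smaller'' in your tail estimate requires $j\ge1$. This holds because strict proper position of the $(n-1)$st derivatives forces $c_{n-1}^{(1)}<c_{n-1}^{(2)}$, so $j=n-1\ge1$. Even $j=0$ would only cost an extra smallness condition on $\epsilon_3$, in place of your uniformity for all $\epsilon_3\le1$.
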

\begin{proof}
For $1\le k<n$,
\[\frac{f_3^{(k)}}{f_4^{(k)}}=\frac{f_1^{(k)}-\epsilon_3 f_2^{(k)}}{f_4^{(k)}}=\frac{f_1^{(k)}-\epsilon_3 f_4^{(k)}+\epsilon_3\epsilon_4f_1^{(k)}}{f_4^{(k)}}=(1+\epsilon_4\epsilon_3)\frac{f_1^{(k)}}{f_4^{(k)}}-\epsilon_3.\]
On $(r(f_4^{(k)}),\infty)$, it is strictly concave by
\eqref{eq:f4-strict-quotient-concavity}, and its left
endpoint limit is $-\infty$. For $k=0$ the extra
term $\epsilon_3 v_2$ in \eqref{eq:perturbed-polynomial-pair} gives
\begin{equation}\label{eq:zeroth-perturbed-quotient}
 \frac{f_3}{f_4}
 =(1+\epsilon_4\epsilon_3)\frac{f_1}{f_4}
  -\epsilon_3
  +\frac{\epsilon_3 v_2}{f_4}.
\end{equation}
We must check that the last term preserves concavity on $(r(f_4),\infty)$.

Let $r_4=r(f_4)$. Lemma~\ref{lem:f4-position} gives
$f_4'(r_4)>0$, $f_4''(r_4)>0$, and $f_1(r_4)<0$. Since
\begin{equation*}
(\frac{1}{f_4})''=(\frac{-f_4'}{f_4^2})'=
       \frac{-f_4''f_4^2+2f_4f_4'^2}{f_4^4},
 \end{equation*}      
 \begin{equation*}
 (\frac{f_1}{f_4})''=(\frac{f_1'f_4-f_4'f_1}{f_4^2})'=
       \frac{(f_1''f_4-f_4''f_1)f_4^2-2f_4f_4'(f_1'f_4-f_4'f_1)}{f_4^4},
\end{equation*}
we have \[
(\frac{1}{f_4})''\sim \frac{2}{f_4'(r_4)}\frac{1}{(x-r_4)^3}, \quad (\frac{f_1}{f_4})''\sim \frac{2f_1(r_4)}{f_4'(r_4)}\frac{1}{(x-r_4)^3}
\]
near $r_4$. So the ratio $\frac{|(1/f_4)''|}{|-(f_1/f_4)''|}$ is bounded near $r_4$.
Recall that
$f_i(x)=\sum_{k=0}^{n}c_{k}^{(i)}\binom{n}{k}x^k$ and $c_{n}^{(i)}=1$ for $i=1,2$. Strict proper position for the $(n-1)$st derivative pair gives $c_{n-1}^{(1)}<c_{n-1}^{(2)}$. At infinity,
 \begin{equation*}
\frac{f_1(x)}{f_4(x)}=\frac{1}{1+\epsilon_4}-\frac{n(c_{n-1}^{(2)}-c_{n-1}^{(1)})}{(1+\epsilon_4)^2x}+O(x^{-2}), 
\end{equation*}
so $-(\frac{f_1(x)}{f_4(x)})''$ has a positive leading term of order $x^{-3}$, whereas $(\frac{1}{f_4(x)})''=O(x^{-n-2})$.
So the ratio $\frac{|(1/f_4)''|}{|-(f_1/f_4)''|}$ also has a bound at infinity. By compactness, the ratio $\frac{|(1/f_4)''|}{|-(f_1/f_4)''|}$ has a bound $K_{\epsilon_4}$ on $(r_4,\infty)$. Choose $\epsilon_3$ small enough that
$\epsilon_3 v_2 K_{\epsilon_4}<1/2$. Then $(\frac{f_3}{f_4})''<0$ on $(r_4,\infty)$.
Shrink $\epsilon_3$ further so that
\begin{equation*}(1+\epsilon_4\epsilon_3)f_1(r_4)+\epsilon_3 v_2<0.
\end{equation*}
This is possible because $f_1(r_4)<0$. Then $f_3(r_4)<0$ by \eqref{eq:zeroth-perturbed-quotient} and therefore $\lim_{x\downarrow r_4}\frac{f_3}{f_4}=-\infty$.

For each $0\le k<n$, we have now proved that
$\frac{f_3^{(k)}}{f_4^{(k)}}$ is strictly concave on $(r(f_4^{(k)}),\infty)$, has a left endpoint limit $-\infty$, and tends to $\frac{1-\epsilon_3}{1+\epsilon_4}$ at infinity. By a similar argument, it is also strictly increasing. Then a similar argument to that in Lemma \ref{Linear-SSPrec} finishes the proof.
\end{proof}

Recall that for $i=1,2$,
\[
 f_i(x)=\sum_{k=0}^n c_k^{(i)}\binom nk x^k,
 \qquad g_i(\lambda)=\sum_{k=0}^n c_k^{(i)}\sigma_k(\lambda),
 \qquad c_n^{(i)}=1, \quad c_{n-1}^{(2)}=-1.
\]

\begin{lem}\label{lem:f3-strict-cone-inclusion}
Suppose $n\ge2$ and the parameters satisfy
Proposition~\ref{prop:f3-f4-hypotheses}. There is
$0<\delta_{\epsilon_3}<1$ such that
\begin{equation}\label{eq:f3-strict-cone-inclusion}
 (1-\delta_{\epsilon_3})\overline{\Upsilon_{g_3}^{\,1}}
 \subset\delta_{\epsilon_3}\mathbf1+
                  \Upsilon_{g_1}^{\,1}.
\end{equation}
Here $\Upsilon_{g_i}^{\,1}:=
\bigcap_{j=1}^n\Upsilon_{\partial_j g_i}$ for $i=1,3$.
\end{lem}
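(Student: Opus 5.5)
The plan is to reduce \eqref{eq:f3-strict-cone-inclusion} to a univariate root comparison between $f_3'$ and $f_1'$. That comparison is then transported to the cones through dominance, using Proposition~\ref{DominanceUnivariable} and Theorem~\ref{DominancePreserve}.

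\textbf{Reduction to one variable.} For each $j$, the polynomial $\partial_j g_i$ depends only on $\lambda$ with the $j$-th entry removed, and it equals $\frac1n\operatorname{Pol}_{n-1}(f_i')$. So $\Upsilon_{\partial_j g_i}$ is the set of $\lambda$ whose $(n-1)$-subvector $\hat\lambda_j$ lies in $\Upsilon_{\operatorname{Pol}_{n-1}(f_i'/n)}$. Since $n\ge2$, $f_i'$ has degree $n-1\ge1$. Note also that $f_3'=f_1'-\epsilon_3 f_2'$, because the constant $\epsilon_3v_2$ disappears. It therefore suffices to prove the $(n-1)$-variable statement
\[
(1-\delta)\,\overline{\Upsilon_{P_3}}\subset\delta\mathbf1+\Upsilon_{P_1},\qquad P_i=\operatorname{Pol}_{n-1}(f_i'/n),
\]
and then apply it to every $\hat\lambda_j$.

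\textbf{Strict root comparison.} For $1\le k<n$ we have
\[
\frac{f_3^{(k)}}{f_4^{(k)}}=(1+\epsilon_3\epsilon_4)\frac{f_1^{(k)}}{f_4^{(k)}}-\epsilon_3,
\]
and $f_1^{(k)}/f_4^{(k)}$ is strictly increasing on $(r(f_4^{(k)}),\infty)$ by \eqref{eq:f4-strict-quotient-increasing}. At $x=r(f_1^{(k)})$ the right-hand side equals $-\epsilon_3<0$. Hence the zero of $f_3^{(k)}/f_4^{(k)}$, namely $r(f_3^{(k)})$, is strictly larger: $r(f_3^{(k)})>r(f_1^{(k)})$ for all $1\le k\le n-1$. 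Let $\mu>0$ be the minimum gap, and let $M$ bound all $|r(f_3^{(k)})|$.

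\textbf{Transport to the cones.} Affine changes commute with polarization and with taking Fang--Ma--G{\aa}rding components: $\operatorname{Pol}(f(ax+b))(\lambda)=\operatorname{Pol}(f)(a\lambda+b\mathbf1)$. For $a>0$ the PRT component is carried to the PRT component, so $\Upsilon$ transforms by $\lambda\mapsto(\lambda-b\mathbf1)/a$.

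To handle the closure, use PRT and openness. For any $\eta>0$ we have $\Upsilon_P+\eta\mathbf1\subset\Upsilon_P$, and therefore $\overline{\Upsilon_P}\subset\Upsilon_P-\eta\mathbf1$. Consider
\[
p(x)=f_3'\!\left(\frac{x+\eta}{1-\delta}\right)\Big/(1-\delta)^{-(n-1)},\qquad h(x)=f_1'(x-\delta).
\]
Both are monic right-Noetherian of degree $n-1$, with
\[
r(p^{(k)})=(1-\delta)r(f_3^{(k+1)})-\eta,\qquad r(h^{(k)})=r(f_1^{(k+1)})+\delta.
\]
Take $\eta=\delta$ and $\delta<\mu/(M+3)$. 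Then $r(p^{(k)})>r(h^{(k)})$ for all $k$, so $h\triangleleft p$ by Proposition~\ref{DominanceUnivariable}. Theorem~\ref{DominancePreserve} then gives $\operatorname{Pol}h\triangleleft\operatorname{Pol}p$, hence $\Upsilon_{\operatorname{Pol}p}\subset\Upsilon_{\operatorname{Pol}h}=\delta\mathbf1+\Upsilon_{P_1}$. Since $\Upsilon_{\operatorname{Pol}p}=(1-\delta)\Upsilon_{P_3}-\eta\mathbf1\supset(1-\delta)\overline{\Upsilon_{P_3}}$, this yields the claim.

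\textbf{Main obstacle.} The main care is needed in two places: the strictness $r(f_3^{(k)})>r(f_1^{(k)})$, which relies on the strict monotonicity from Lemma~\ref{lem:f4-position}, and checking that the scaling by $(1-\delta)$ does not destroy the margin when roots are negative. The latter is handled by the bound $M$ and the choice $\delta<\mu/(M+3)$.
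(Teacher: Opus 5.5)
Your proposal is correct and follows essentially the same route as the paper. Both arguments use the same steps: a strict gap $r(f_3^{(k)})>r(f_1^{(k)})$ for $1\le k<n$, an affine rescaling by $1-\delta$ with $\delta$ small enough to preserve that gap, dominance of the derivatives via Proposition~\ref{DominanceUnivariable} and Theorem~\ref{DominancePreserve}, and PRT to absorb the closure. The paper gets the gap more directly from $f_3^{(k)}(r(f_1^{(k)}))=-\epsilon_3 f_2^{(k)}(r(f_1^{(k)}))<0$, and it absorbs the closure on the $\Upsilon_{g_1}^{\,1}$ side, whereas you do it on the $g_3$ side.

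Only cosmetic fixes are needed. You should also impose $\delta<1$. Since $p$ and $h$ are not monic, normalize them by their positive leading coefficients $n(1-\epsilon_3)$ and $n$ before invoking those results. Finally, the closure step $\overline{\Upsilon_P}+\eta\mathbf1\subset\Upsilon_P$ should be justified by approximating a boundary point from inside and using PRT with the full orthant, not merely from $\Upsilon_P+\eta\mathbf1\subset\Upsilon_P$.
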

\begin{proof}
For $1\le k<n$, the strongly strict proper-position assumptions give
$r(f_1^{(k)})>r(f_2^{(k)})$, so $f_2^{(k)}(r(f_1^{(k)}))>0$.
Then we see that
$f_3^{(k)}(r(f_1^{(k)}))=-\epsilon_3 f_2^{(k)}(r(f_1^{(k)}))<0$, which implies that $r(f_3^{(k)})>r(f_1^{(k)})$.
The assumption that $c_{n-1}^{(2)}=-1$ also gives $r(f_2^{(k)})\ge r(f_2^{(n-1)})=1$,
so $r(f_3^{(k)})>1$. Choose
\[
\delta_{\epsilon_3}:=\frac{1}{2}\min\{1,
\min_{1\le k<n}\frac{r(f_3^{(k)})-r(f_1^{(k)})}{r(f_3^{(k)})+2}\}.
\]
Then $0<\delta_{\epsilon_3}<1$ and
\begin{equation}\label{eq:f3-dilated-root-gap}
 (1-\delta_{\epsilon_3})r(f_3^{(k)}) \ge r(f_1^{(k)})+2\delta_{\epsilon_3}.
\end{equation}

Define the right-Noetherian polynomial
$f_5(x)=f_3(\frac{x+2\delta_{\epsilon_3}}{1-\delta_{\epsilon_3}})$, and $g_5=\Pol_n f_5$.
Then \[
r(f_5^{(k)})=(1-\delta_{\epsilon_3})r(f_3^{(k)})-2\delta_{\epsilon_3} \ge r(f_1^{(k)})
\]
for all $1\le k<n$.
By Definition \ref{Dominance}, $f_1'\triangleleft f_5'$. By Theorem \ref{DominancePreserve}, $\Pol_{n-1}f_1'\triangleleft \Pol_{n-1}f_5'$.
So for all $i=1,\ldots,n$,
\[
\Upsilon_{\partial_i g_5}=\pi_i^{-1}\Upsilon_{\Pol_{n-1}f_5'}\subset \pi_i^{-1}\Upsilon_{\Pol_{n-1}f_1'}=\Upsilon_{\partial_i g_1},
\]
where \[
\pi_i(\lambda_1,\ldots,\lambda_n)=(\lambda_1,\ldots,\lambda_{i-1},\lambda_{i+1},\ldots,\lambda_n).
\]
Taking the intersection provides
\[
(1-\delta_{\epsilon_3})\Upsilon_{g_3}^{\,1}-2\delta_{\epsilon_3}\mathbf1 =\Upsilon_{g_5}^{\,1} \subset \Upsilon_{g_1}^{\,1}.\]
Since $\Upsilon_{g_1}^{\,1}$ passes PRT, we have
\[(1-\delta_{\epsilon_3})\overline{\Upsilon_{g_3}^{\,1}}\subset
2\delta_{\epsilon_3}\mathbf1+\overline{\Upsilon_{g_1}^{\,1}}
\subset\Upsilon_{g_1}^{\,1}+\delta_{\epsilon_3}\mathbf1,\] which finishes the proof.
\end{proof}

In order to study the coerciveness problem, let us first prove a lemma.
\begin{lem}
\label{d1-sup-comparison}
\begin{enumerate}
    \item $d_1(\sup_X\varphi,\varphi)\le V_2\sup_X\varphi$ and $d_1(0,\varphi)\le 2V_2\sup_X\varphi$ for all $\varphi\in\HH$ such that $I_2(\varphi)=0$.
    \item For any $\kappa>0$, there exist a constant $\delta>0$ and another constant $C$ independent of $\varphi$ but possibly dependent on $\kappa$ such that $d_1(0,\varphi)\ge \delta \sup_X\varphi-C$ for all $\varphi\in\HH$ such that $I_2(\varphi)=0$ and $g_2(\chi_\varphi)\ge \kappa\omega^n$.
    \item If $I_2(\varphi)=0$, then $I_2(\varphi-\sup_X\varphi)=-V_2 \sup_X\varphi$.
\end{enumerate}
\end{lem}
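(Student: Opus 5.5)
Part (3) is a direct computation, so I will prove it first and use it in the other two parts. Along the path $\varphi_t=\varphi-t\sup_X\varphi$, $t\in[0,1]$, we have $\chi_{\varphi_t}=\chi_\varphi$, so $\varphi_t\in\HH$ and $g_2(\chi_{\varphi_t})=g_2(\chi_\varphi)$. By \eqref{functional properties},
\[
\frac{d}{dt}I_2(\varphi_t)=-\sup_X\varphi\int_X g_2(\chi_\varphi)=-V_2\sup_X\varphi .
\]
Here $\int_X g_2(\chi_\varphi)=V_2$ because the integral is cohomological. Integrating from $0$ to $1$ gives $I_2(\varphi-\sup_X\varphi)=I_2(\varphi)-V_2\sup_X\varphi=-V_2\sup_X\varphi$.

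For part (1), the same path joins $\varphi$ to $\varphi-\sup_X\varphi$ at $d_1$-cost $|\sup_X\varphi|V_2$. This controls $d_1(\varphi,\varphi-\sup_X\varphi)$, which I read as the intended meaning of ``$d_1(\sup_X\varphi,\varphi)$''; constants $c$ themselves lie in $\HH$ only when $\chi\in\Upsilon_{g_2}$.

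I first check that $\sup_X\varphi\ge0$ when $I_2(\varphi)=0$. Along $\psi_t=t\varphi$ we have $\chi_{t\varphi}=(1-t)\chi+t\chi_\varphi$, which lies in $\Upsilon_{g_2}$ by Proposition \ref{convexmatrix} if the cone is convex; otherwise I would use any path in $\HH$ from $0$ to $\varphi$. Then
\[
0=I_2(\varphi)=\int_0^1\int_X\dot\psi_t\, g_2(\chi_{\psi_t})\,dt .
\]
Combining this with $I_2(\varphi-\sup_X\varphi)\le 0$ from monotonicity (since $\varphi-\sup_X\varphi\le0$ and the integrand $\dot\psi_t g_2\le0$ along the segment from $0$ to $\varphi-\sup_X\varphi$), part (3) gives $\sup_X\varphi\ge0$.

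For $d_1(0,\varphi)$, I would use the triangle inequality
\[
d_1(0,\varphi)\le d_1(0,\varphi-\sup_X\varphi)+V_2\sup_X\varphi .
\]
For a nonpositive potential $u=\varphi-\sup_X\varphi$, I expect $d_1(0,u)=-I_2(u)$: the path $tu$ has monotone $\dot\psi_t=u\le0$, and the $d_1$-length of a monotone path equals $|I_2|$ of its endpoint difference. By part (3), $-I_2(u)=V_2\sup_X\varphi$, so the total is at most $2V_2\sup_X\varphi$. For the lower bound on the $d_1$-length of general paths, one only needs
\[
\int_X|\dot\psi_t|\,g_2\ge\Bigl|\int_X\dot\psi_t\,g_2\Bigr| ,
\]
which gives $d_1(\psi_0,\psi_1)\ge|I_2(\psi_1)-I_2(\psi_0)|$. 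For the upper bound I need a path staying in $\HH$; here I would use convexity of $\Upsilon_{g_2}$ (an ideal cone), or else the weak geodesics of \cite{ChenGhoshNieGeodesics} together with the identification of $d_1$ via $I_2$ on monotone paths. This is the main technical obstacle.

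For part (2), the lower bound $d_1(0,\varphi)\ge|I_2|$-type estimates give nothing directly, since $I_2(\varphi)=0$. Instead I would use the Pythagorean-type formula from \cite{ChenGhoshNieGeodesics}, or argue as follows: $d_1(0,\varphi)\ge \int_X|\varphi|\,g_2(\chi_\varphi)-C$. The hypothesis $g_2(\chi_\varphi)\ge\kappa\omega^n$ then gives
\[
d_1(0,\varphi)\ge\kappa\int_X|\varphi|\,\omega^n-C .
\]
Since $\chi_\varphi\in\Upsilon_{g_2}$ implies $\chi_\varphi>\omega$ by \eqref{eq:denominator-cone-normalization}, $\varphi$ is $(A\omega)$-psh for a fixed $A$. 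The standard compactness estimate $\sup_X\varphi\le\frac{1}{\int_X\omega^n}\int_X\varphi\,\omega^n+C$ for quasi-psh functions then yields the claim with $\delta$ proportional to $\kappa$.

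The inequality $d_1(0,\varphi)\ge\int_X|\varphi|\,g_2(\chi_\varphi)-C$ is the step I expect to be hardest. I would try to prove it by comparing the cost of any path with the endpoint measure via a convexity argument, mimicking Darvas's estimate
\[
d_1(u,v)\ge c\int_X|u-v|\,(\omega_u^n+\omega_v^n).
\]
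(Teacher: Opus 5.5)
Parts (3) and (1) are correct and essentially the paper's argument. Your segment $tu$, with $u=\varphi-\sup_X\varphi$, is the paper's segment $(1-t)\sup_X\varphi+t\varphi$ translated by $-\sup_X\varphi$. The path issue you flag is not an obstacle: $\HH$ is convex, since $g_2$ is an ideal Fang--Ma--G{\aa}rding polynomial (Lemma \ref{Ellipticity and Concavity}(3)), and $0\in\HH$ is a standing assumption.

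That same assumption puts all constants in $\HH$, so $d_1(\sup_X\varphi,\varphi)$ should be read literally. By translation invariance it equals $d_1(0,u)$, not $d_1(\varphi,\varphi-\sup_X\varphi)$; the latter is just the translation cost $V_2\sup_X\varphi$. Your bound $d_1(0,u)\le -I_2(u)=V_2\sup_X\varphi$ is therefore the actual first claim. This reading matters later, where the paper uses $d_1(0,u_t)=d_1(s_t,\varphi_t)$.

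The genuine gap is in part (2). Your argument rests on $d_1(0,\varphi)\ge\int_X|\varphi|\,g_2(\chi_\varphi)-C$, which you leave unproved. That is a Darvas-type two-sided estimate, not available in this setting. Its negative half is exactly what convexity does not give. Along the (regularized) geodesic $\varphi_t$ from $0$ to $\varphi$, convexity in $t$ yields $\dot\varphi_1\ge\varphi_1-\varphi_0=\varphi$. This bounds $|\dot\varphi_1|$ from below only on $\{\varphi>0\}$.

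The missing observation is that you never need the negative part. Your own mean-value inequality involves only $\int_X\varphi\,\omega^n\le\int_X\varphi_+\,\omega^n$. The one-sided bound $\int_X\varphi_+\,g_2(\chi_\varphi)\le d_1(0,\varphi)$ is exactly Theorem 1.3(2) of \cite{ChenGhoshNieGeodesics}, and it comes from the convexity just described. Together with $g_2(\chi_\varphi)\ge\kappa\omega^n$ this gives $\kappa\int_X\varphi_+\,\omega^n\le d_1(0,\varphi)$. Your estimate for quasi-psh functions gives $\sup_X\varphi\le C\int_X\varphi_+\,\omega^n+C$. These two combine to $d_1(0,\varphi)\ge\delta\sup_X\varphi-C$.

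This is the paper's proof. It phrases the mean-value step as $\int_X(-u)\chi^n\le C$ for the $\chi$-psh function $u=\varphi-\sup_X\varphi$ with $\sup_X u=0$, which is equivalent to your version.
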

\begin{proof}
Assume that $I_2(\varphi)=0$. Let $s=\sup_X\varphi$ and $u=\varphi-s\le0$. First,
\begin{equation}
\label{eq:I2-positive-volume}
I_{2}(\varphi)=\int_{X}\varphi\left(\int_{0}^{1}g_{2}(\chi_{t\varphi})dt\right)
\end{equation}
by \eqref{functional properties}. Since $\mathcal{H}$ is convex and both $0,\varphi\in \mathcal{H}$, we have $\int_{0}^{1}g_{2}(\chi_{t\varphi})dt>0$ pointwise. Hence $s\ge 0$. Then using the path $\varphi_{s,t}=(1-t)s+t\varphi$ such that $\frac{d\varphi_{s,t}}{dt}=\varphi-s\le 0$, we have
\begin{equation}
\label{d1svarphi}
\begin{split}
d_1(s,\varphi)\le\int_0^1 \int_X|\frac{d\varphi_{s,t}}{dt}|g_2(\chi_{\varphi_{s,t}})\,dt=-\int_0^1 \int_X \frac{d\varphi_{s,t}}{dt}g_2(\chi_{\varphi_{s,t}})\,dt\\
=-\int_0^1\frac{dI_2(\varphi_{s,t})}{dt}\,dt=I_2(s)-I_2(\varphi)=I_2(s)=V_2 s.
\end{split}
\end{equation}
Using the path $t\in[0,1]\to ts$, we see that
$d_1(s,0)\le V_2s$. So $d_1(0,\varphi)\le 2V_2 s$ by the triangle inequality. This finishes the first part.

Next, using Theorem $1.3(2)$ of \cite{ChenGhoshNieGeodesics}, we have $$\kappa\int_{X}\varphi_{+} \omega^n\le\int_{X}\varphi_{+}g_{2}(\chi_\varphi)\le d_{1}(0,\varphi).$$
We know that $u$ is $\chi$-psh  and $\sup_{X}u=0$. This gives 
\[
    \int_{X}(-u)\chi^{n}\le C\implies s\int_{X}\chi^{n}\le C+\int_{X}\varphi\chi^{n}\le C+\int_{X}\varphi_{+}\chi^{n}\le C+C\int_{X}\varphi_{+}\omega^{n}.
\] 
This finishes the second part. The third part follows from \eqref{functional properties} along $\varphi-t\sup_X\varphi$.
\end{proof}

Define $I_3, I_4$ by the same first-variation formula as $I_1,I_2$, with $I_3(0)=I_4(0)=0$ and put $J_3=-I_3$.
Linearity in the polynomial coefficients gives
\begin{equation}\label{eq:f3-f4-energy-identities}
 \begin{split}
 J_3(\varphi)
   &=J_1(\varphi)+\epsilon_3 I_2(\varphi)
       -\epsilon_3 v_2\int_X\varphi\,\omega^n,\\
 I_4(\varphi)&=I_2(\varphi)-\epsilon_4 J_1(\varphi).
 \end{split}
\end{equation}
In particular, $V_3:=\int_Xg_3(\chi_\varphi)=V_1=0$ and
$V_4:=\int_Xg_4(\chi_\varphi)=V_2>0$.
Thus the perturbed equation has exactly the required
integral compatibility in the original class, and
$J_3$ is invariant under adding constants.

Now we are ready to prove the perturbation lemma.
\begin{lem}\label{lem:f3-f4-coercivity-transfer}
Assume the coercivity inequality for $J_1$ in
Theorem~\ref{thm:main}. Fix $\epsilon_4$ as above. After
decreasing $\epsilon_{3,0}(\epsilon_4)$ if necessary, $J_3$
is coercive on $\HH_4$ with respect to its $g_4$ path
distance $d_{1,4}$, on the slice $I_4=0$.
\end{lem}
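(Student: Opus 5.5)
The plan is to reduce everything to the original slice $I_2=0$, where the hypothesis of Theorem~\ref{thm:main} applies, by subtracting a constant. Then I bound $J_3$ from below by $J_1$, and $d_{1,4}$ from above by $d_1$ and $J_1$, using an explicit path in $\HH_4$ rather than near-geodesics.

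\emph{Step 1: normalization.} Let $\varphi\in\HH_4$ with $I_4(\varphi)=0$. By \eqref{eq:f3-f4-energy-identities}, $I_2(\varphi)=\epsilon_4J_1(\varphi)$. Put $c=\epsilon_4J_1(\varphi)/V_2$ and $\psi=\varphi-c$. By \eqref{functional properties} applied along $\varphi-tc$, we get $I_2(\psi)=0$. Also $\psi\in\HH_2$ by \eqref{eq:H4-subset-H2}, since $\HH_2$ is invariant under constants. Since $V_1=V_3=0$, both $J_1$ and $J_3$ are invariant under adding constants. Using \eqref{eq:f3-f4-energy-identities} and $I_2(\psi)=0$,
\[
J_3(\varphi)=J_3(\psi)=J_1(\psi)-\epsilon_3v_2\int_X\psi\,\omega^n ,\qquad J_1(\varphi)=J_1(\psi)\ge\delta d_1(0,\psi)-C .
\]
In particular $J_1(\varphi)$ is bounded below, so $c\ge -C$.

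\emph{Step 2: absorbing the linear term.} I need $\int_X\psi\,\omega^n\le\sup_X\psi\int_X\omega^n\le C\,d_1(0,\psi)+C$. As in the proof of Lemma~\ref{d1-sup-comparison}(2), $\sup_X\psi$ is controlled by $\int_X\psi_+\omega^n$, using the $L^1$ compactness of $\chi$-psh functions normalized by $\sup=0$. So it suffices to bound $\int_X\psi_+\omega^n$ by $d_1(0,\psi)$. Lemma~\ref{d1-sup-comparison}(2) does this only when $g_2(\chi_\psi)\ge\kappa\omega^n$, and that assumption is not available along the slice. Instead I would invoke the Darvas-type estimate of Theorem~1.3(2) of \cite{ChenGhoshNieGeodesics} with the weight taken at the endpoint $0$, namely $\int_X\psi_+g_2(\chi)\le C\,d_1(0,\psi)$. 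Because $0\in\HH$ is smooth and $X$ is compact, $g_2(\chi)\ge\kappa_0\omega^n$, which gives the required bound. If that theorem only provides the weight $g_2(\chi_\psi)$, this is the step that needs extra work; I would then try to symmetrize the weight via the triangle inequality through $\max(\psi,0)$-type comparisons.

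With this bound, choosing $\epsilon_3$ small so that $\epsilon_3C/\delta\le 1/2$ yields
\[
J_3(\varphi)\ge \tfrac12 J_1(\psi)-C\ge \tfrac14J_1(\varphi)+\tfrac{\delta}{4}d_1(0,\psi)-C .
\]

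\emph{Step 3: comparing distances.} I use the explicit path instead of geodesics, which for $g_2$ might leave $\HH_4$. The set $\Upsilon_{g_4}$ is convex, since $g_4\in\IG$ by Lemma~\ref{lem:f4-position} and Corollary~\ref{polarizationproperposition}. Hence $\HH_4$ is convex and contains $0$ and constants. Repeating the argument of Lemma~\ref{d1-sup-comparison}(1) with $I_4$ in place of $I_2$ and $V_4=V_2$ in place of $V_2$, with $s=\sup_X\varphi\ge0$ because $I_4(\varphi)=0$, gives
\[
d_{1,4}(0,\varphi)\le 2V_2\sup_X\varphi=2V_2(\sup_X\psi+c)\le C\,d_1(0,\psi)+C+2\epsilon_4\max(J_1(\varphi),0).
\]
Combining this with Step~2 gives $J_3(\varphi)\ge\delta'\,d_{1,4}(0,\varphi)-C'$ for some $\delta'>0$, which is the claimed coercivity.

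The main obstacle is the one flagged in Step~2: obtaining an upper bound for $\sup_X\psi$ in terms of $d_1(0,\psi)$ without any lower volume bound on $g_2(\chi_\psi)$. The remaining steps are bookkeeping with \eqref{eq:f3-f4-energy-identities}.
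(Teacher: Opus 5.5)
Your Step 2 rests on a false premise, and as written it is not proved. You say the lower volume bound $g_2(\chi_\psi)\ge\kappa\,\omega^n$ is ``not available along the slice.'' In fact it holds on all of $\HH_4$, regardless of normalization, and this is precisely why one passes to $\HH_4$. If $\varphi\in\HH_4$, then \eqref{eq:H4-subset-H2} gives $F_\omega(\chi_\varphi)>-1/\epsilon_4$. Proposition~\ref{prop:comparison} with $C=1/\epsilon_4$ then yields $g_2(\chi_\varphi)\ge\kappa_{1/\epsilon_4}\chi_\varphi^n\ge\kappa_{1/\epsilon_4}\,\omega^n$, where the last step holds because every eigenvalue exceeds $1$ by \eqref{eq:denominator-cone-normalization}. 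Since $\chi_\psi=\chi_\varphi$, Lemma~\ref{d1-sup-comparison}(2) applies to $\psi$ directly and gives $\sup_X\psi\le C\,d_1(0,\psi)+C$, with constants depending on the fixed $\epsilon_4$.

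The replacement you propose, $\int_X\psi_+\,g_2(\chi)\le C\,d_1(0,\psi)$, is not what Theorem~1.3(2) of \cite{ChenGhoshNieGeodesics} provides as it is used in this paper. There $(\psi-0)_+$ is weighted by $g_2(\chi_\psi)$, i.e.\ at the endpoint that is larger where the integrand is nonzero. The detour through $u_t/2$ in the proof of Proposition~\ref{coercive-Lelong-0} is needed for exactly this reason. Without a volume lower bound, nothing in the paper controls $\sup_X\psi$ by $d_1(0,\psi)$. Your symmetrization fallback would need Darvas-type machinery (Pythagorean formula, envelope estimates) that is not available in this setting.

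Once Step 2 is repaired this way, the rest of your argument checks out. Absorbing $\epsilon_3v_2\int_X\psi\,\omega^n$ by taking $\epsilon_3$ small relative to $\delta$ and the $\epsilon_4$-dependent constants is fine. So is bounding $d_{1,4}(0,\varphi)\le 2V_2(\sup_X\psi+c)$ via affine paths in the convex set $\HH_4$.

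The paper's bookkeeping is somewhat leaner. It normalizes $\sup_Xu=0$ rather than $I_2=0$, so the term $-\epsilon_3v_2\int_Xu\,\omega^n$ is nonnegative and is simply dropped. Coercivity together with Lemma~\ref{d1-sup-comparison}(2) gives $J_1(u)\ge-bI_2(u)-C$. The identity $I_4=I_2-\epsilon_4J_1$ from \eqref{eq:f3-f4-energy-identities} then turns $J_1(u)+\epsilon_3I_2(u)$ into $\delta(-I_4(u))-C$, with $\delta=\frac{b-\epsilon_3}{1+b\epsilon_4}$. Finally, $-I_4(u)$ equals $V_2$ times the supremum of the $I_4$-normalized potential, and Lemma~\ref{d1-sup-comparison}(1) for $f_4$ finishes. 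Both routes hinge on the same input, the volume comparison on $\HH_4$, and that is the idea your proposal is missing.
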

\begin{proof}
Let $u\in\HH_4$ with $\sup_X u=0$. By \eqref{eq:H4-subset-H2}, $F_\omega(\chi_u)>-1/\epsilon_4$. Proposition~\ref{prop:comparison} then gives
$g_2(\chi_u)\ge\kappa_{\epsilon_4}\omega^n$. By Lemma \ref{d1-sup-comparison}, the coercivity implies
\begin{equation}\label{eq:f4-restricted-energy-control}
 J_1(u) \ge -bI_2(u)-C
 \quad\text{for some }b>0\text{ depending on the fixed }\epsilon_4
\end{equation}
since $J_1$ is invariant under adding constants.
Choose also $\epsilon_3<b$ and put $\delta=\frac{b-\epsilon_{3}}{1+b\epsilon_{4}}>0$.
Since $u\le0$, \eqref{eq:f3-f4-energy-identities} and
\eqref{eq:f4-restricted-energy-control} give
\begin{equation}\label{eq:f4-coercive-energy-bound}
 \begin{split}
 &J_3(u)\ge J_{1}(u)+\epsilon_{3}I_{2}(u)= (1-\delta \epsilon_4) J_{1}(u) + \delta \epsilon_4 J_{1}(u)+\epsilon_{3}I_{2}(u)\\
 &\ge b(1-\delta \epsilon_4)(-I_{2}(u))+ \delta \epsilon_4 J_{1}(u)+\epsilon_{3}I_{2}(u)-C= \delta(-I_{4}(u))-C.
 \end{split}
\end{equation}
This implies the required coerciveness by Lemma \ref{d1-sup-comparison} applied to $f_4$ since $J_3$ is also invariant under adding constants.
\end{proof}

\section{Short time existence of the flow}

The following lemma is the analogue of
\cite[Lemma 2.4]{ChuLeeHypercriticalDHYM} for the operator in \eqref{eq:flow}.

\begin{lem} \label{Ellipticity and Concavity}
Define
\[
 F(\lambda)=\frac{g_1(\lambda)}{g_2(\lambda)},\qquad
 F_i=\frac{\partial F}{\partial\lambda_i},\quad
 F_{ij}=\frac{\partial^2F}{\partial\lambda_i\partial\lambda_j}.
\]
Then $\Upsilon_{g_2}$ is an open convex symmetric set, and the following
properties hold:
\begin{enumerate}
\item $F_i>0$ on $\Upsilon_{g_2}$ for every $i=1,\ldots,n$.
\item $F$ is concave on $\Upsilon_{g_2}$.
\item If $\lambda_i\ge\lambda_j$, then $F_i\le F_j$ on $\Upsilon_{g_2}$.
\item For every positive definite
Hermitian matrix $A$, the operator
\[
F_A(B)=F(\lambda(A^{-1}B))=F(\lambda(A^{-1/2}BA^{-1/2}))
\]
is elliptic on the convex set
\[\Upsilon_{g_2}(A)=\{B\in\operatorname{Herm}(n):
              \lambda(A^{-1}B)\in\Upsilon_{g_2}\},\]
i.e. $\sum_{i, j=1}^{n}F^{i\bar j}\xi_i\bar\xi_j>0$ for all $B\in\Upsilon_{g_2}(A)$ and nonzero $\xi\in\mathbb{C}^n$, where $F^{i\bar j}=\frac{\partial}{\partial {B_{i\bar j}}}F_A(B)$.
\item $F_A(B)$ is concave on $\Upsilon_{g_2}(A)$.
\end{enumerate}
\end{lem}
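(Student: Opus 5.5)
We work under the standing hypotheses of Theorem \ref{thm:main}, namely that $f_1,f_2$ are right-Noetherian, $f_2\prec f_1$ strongly strictly, and $f_2\idealprec f_1$, with $g_i=\Pol_n f_i$. The plan is to read each item off an earlier result and then pass from eigenvalues to matrices by the standard symmetric-function calculus.

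\textbf{Structure of $\Upsilon_{g_2}$.} By Lemma \ref{FangMaLinCorrespondence}, $f_2\in\IG_1$. Theorem \ref{kappa-polarization} with $\kappa=(n)$ then gives $g_2\in\IG_n$, so $\Upsilon_{g_2}$ is convex. It is open as a connected component of the open set $\{g_2>0\}$. Since $g_2$ is symmetric, any coordinate permutation maps $\Upsilon_{g_2}$ to a component of $\{g_2>0\}$ passing PRT. By uniqueness of that component it is $\Upsilon_{g_2}$ itself, so the set is symmetric.

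\textbf{Items 1 and 2.} Item 1 is exactly Proposition \ref{Strict proper position}(i), applied with $k=0$ in Definition \ref{SSPrec}, which gives $f_2\prec f_1$ strictly. For item 2, Corollary \ref{polarizationproperposition} gives $g_2\idealprec g_1$. Theorem \ref{ConcaveQuotient} with $\alpha=0$ then shows that $F=g_1/g_2$ is concave on $\Upsilon_{g_2}$.

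\textbf{Item 3.} Fix $i\ne j$ and regard $F$ as a function of $(\lambda_i,\lambda_j)$ with the other entries frozen. Because $g_1,g_2$ are multi-affine and symmetric, the function $\phi(s)=F(\lambda+s(e_i-e_j))$ satisfies $\phi(s)=\phi(\lambda_j-\lambda_i-s)$, by the swap symmetry. Hence $\phi'$ vanishes at the midpoint $s_0=(\lambda_j-\lambda_i)/2$. The segment from $\lambda$ to its swap lies in $\Upsilon_{g_2}$ by convexity and symmetry, and $\phi$ is concave there by item 2, so $\phi'$ is nonincreasing. When $\lambda_i\ge\lambda_j$ we have $s_0\le 0$, so $\phi'(0)\le\phi'(s_0)=0$. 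Since $\phi'(0)=F_i-F_j$, this gives $F_i\le F_j$.

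\textbf{Items 4 and 5.} First, $\Upsilon_{g_2}(A)$ is convex: writing $B\mapsto A^{-1/2}BA^{-1/2}$ as a linear map, convexity follows from Proposition \ref{convexmatrix}. For $C=A^{-1/2}BA^{-1/2}$, diagonalize $C=U\,\mathrm{diag}(\lambda)\,U^*$. The standard formula gives $\partial F(\lambda(C))/\partial C$ as $U\,\mathrm{diag}(F_i)\,U^*$, which is positive definite by item 1. Then
\[
F^{i\bar j}=\bigl(A^{-1/2}U\,\mathrm{diag}(F_k)\,U^*A^{-1/2}\bigr)_{\cdot},
\]
a congruence of a positive definite matrix and hence positive definite. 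This is ellipticity.

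For concavity, a concave symmetric function of the eigenvalues on a convex symmetric set defines a concave function of the Hermitian matrix; this is the classical Davis/Lewis result, which I would cite or prove via Ky Fan. Composing with the linear map $B\mapsto A^{-1/2}BA^{-1/2}$ preserves concavity, which gives item 5.

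The main obstacle is this matrix-concavity step. I would cite the standard Ball/Caffarelli–Nirenberg–Spruck argument rather than reprove it, as Chu–Lee do in their Lemma 2.4, noting that it needs both the concavity from item 2 and the symmetry of $F$.
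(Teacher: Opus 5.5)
Your proposal is correct. For the structure of $\Upsilon_{g_2}$ and for items 1, 2 and 4 you follow the paper essentially verbatim, and you additionally justify openness and permutation invariance, which the paper leaves implicit. Your item 3 is the paper's argument rephrased: the paper applies the tangent-line inequality $F(\mu)-F(\lambda)\le\nabla F(\lambda)\cdot(\mu-\lambda)$ to the swapped point $\mu$, while you use that $\phi'$ is nonincreasing and vanishes at the midpoint.

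The genuine difference is item 5. The paper differentiates $F_A$ twice at a matrix with distinct eigenvalues, using the spectral-function formula it quotes from Spruck. In that formula the diagonal part is nonpositive by item 2, and the off-diagonal coefficients $\frac{F_i-F_j}{\lambda_i-\lambda_j}$ are nonpositive by item 3; the paper then extends to repeated eigenvalues by continuity. You instead appeal to the Davis--Lewis principle. On a convex symmetric domain this follows from two facts: Ky Fan's inequality (the eigenvalue vector of $\frac{B_0+B_1}{2}$ is majorized by $\frac{\lambda(B_0)+\lambda(B_1)}{2}$), and Schur-concavity of $F$. Schur-concavity comes from concavity, symmetry, and Rado's description of majorization as membership in the convex hull of permutations, with that hull staying inside $\Upsilon_{g_2}$ by convexity and symmetry.

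Your route needs neither item 3 nor any smoothness of $F$, avoids the repeated-eigenvalue continuity step, and uses only tools the paper already quotes in Section 2. The paper's route is more hands-on, exhibits the explicit second-order structure of $F_A$ familiar from Chu--Lee, and is where item 3 is actually put to use.
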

\begin{proof}
\begin{enumerate}
    \item $\partial_{\lambda_i}F(\lambda)>0$ by Proposition \ref{Strict proper position}.
    \item By Corollary \ref{polarizationproperposition}, $g_2\idealprec g_1$. Then $F=g_1/g_2$ is concave on $\Upsilon_{g_2}$ by Theorem \ref{ConcaveQuotient}.
    \item By Lemma \ref{FangMaLinCorrespondence} and Theorem \ref{kappa-polarization}, $g_2$ is an ideal Fang-Ma-G\aa rding polynomial. So $\Upsilon_{g_2}$ is convex by definition. Now let $\mu$ be obtained from $\lambda$ by interchanging $\lambda_i$ and $\lambda_j$; then $t\mu+(1-t)\lambda\in\Upsilon_{g_2}$ for all $t\in[0,1]$. By the concavity of the function $t\to F(t\mu+(1-t)\lambda)$, 
\[
 0=F(\mu)-F(\lambda)
 \le \frac{d}{dt}\vert_{t=0} F(t\mu+(1-t)\lambda)
 =-(F_i-F_j)(\lambda_i-\lambda_j).
\]
Thus $F_i\le F_j$ when $\lambda_i>\lambda_j$.
When $\lambda_i=\lambda_j$, the symmetry gives $F_i=F_j$.
\item Without loss of generality, we assume that $A=I$. Then $\Upsilon_{g_2}(A)$ is convex by Proposition \ref{convexmatrix}. At a diagonal matrix $B=\operatorname{diag}(\lambda_1,\ldots,
\lambda_n)$, by Theorem 1.4 of \cite{SpruckGeometricAspects}, $F^{i\bar j}=\delta_{ij}F_i$. The ellipticity now follows from (1).
\item When $\lambda_i$ are distinct, by Theorem 1.4 of \cite{SpruckGeometricAspects}, \begin{equation}\label{eq:flow-matrix-concavity}
 F^{i \bar j, r\bar s} H_{i \bar j}H_{r\bar s}=\frac{\partial^2 F_A(B)}{\partial {B_{i\bar j}}\partial {B_{r\bar s}}}H_{i \bar j}H_{r\bar s}
 =\sum_{i,j}F_{ij}H_{i\bar i}H_{j\bar j}
   +2\sum_{i<j}\frac{F_i-F_j}{\lambda_i-\lambda_j}
                    |H_{i\bar j}|^2\le 0
\end{equation}
by (2) and (3). This also holds at repeated eigenvalues by continuity.
\end{enumerate}
\end{proof}

Now we are ready to prove the short-time existence of the flow.

\begin{prop}\label{prop:short-time}
For every $\varphi_0\in\HH$, there is $T>0$ such
that \eqref{eq:flow} has a unique smooth solution on $X\times[0,T]$
with $\varphi(t)\in\HH$. On every interval of smooth admissible
existence, it satisfies
\begin{equation}\label{eq:short-time-speed}
 \min_X F_\omega(\chi_{\varphi_0})
 \le\frac{\partial\varphi}{\partial t}
 \le\max_X F_\omega(\chi_{\varphi_0}).
\end{equation}
\end{prop}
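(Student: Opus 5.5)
We will obtain short-time existence from the standard theory of fully nonlinear parabolic equations, using the structural properties collected in Lemma~\ref{Ellipticity and Concavity}. In local holomorphic coordinates, write the flow \eqref{eq:flow} as
\[
\partial_t\varphi=F_{\omega}(\chi+\ddc\varphi)=F\bigl(\lambda(\omega^{-1}(\chi+\ddc\varphi))\bigr),
\]
that is, $F_A(B)$ with $A=\omega$ and $B=\chi_\varphi$. Since $\varphi_0\in\HH$, the endomorphism $\lambda(\omega^{-1}\chi_{\varphi_0})$ lies in $\Upsilon_{g_2}$ at every point. By compactness of $X$ and openness of $\Upsilon_{g_2}$, there is a compact subset $K\subset\Upsilon_{g_2}$ and a $C^2$-neighbourhood of $\varphi_0$ whose eigenvalue vectors all lie in a fixed neighbourhood of $K$ with compact closure in $\Upsilon_{g_2}$. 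On this set, Lemma~\ref{Ellipticity and Concavity}(4) gives uniform ellipticity: the bounds $F_i\ge c>0$ hold by continuity and compactness, and the $F_i$ are bounded above. $F$ is also smooth there, being a rational function whose denominator is positive.

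We then linearize at $\varphi_0$. The linearized operator $L\psi=\partial_t\psi-F^{i\bar j}(\chi_{\varphi_0})\psi_{i\bar j}$ is uniformly parabolic with smooth coefficients. Applying the inverse function theorem in parabolic Hölder spaces $C^{2+\alpha,1+\alpha/2}(X\times[0,T])$, we get a unique solution on a short interval $[0,T]$ that remains $C^2$-close to $\varphi_0$, hence stays in $\HH$. Smoothness follows by bootstrapping: differentiating the equation, the derivatives satisfy linear parabolic equations with Hölder coefficients, and Schauder estimates apply. The concavity in Lemma~\ref{Ellipticity and Concavity}(5) is not needed here; it will matter only for Evans--Krylov in the long-time argument.

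Uniqueness on any interval of smooth admissible existence comes from the maximum principle. If $\varphi,\tilde\varphi$ are two admissible solutions, the difference $w=\varphi-\tilde\varphi$ satisfies
\[
\partial_t w=a^{i\bar j}w_{i\bar j},\qquad a^{i\bar j}=\int_0^1F^{i\bar j}\bigl(\chi_{s\varphi+(1-s)\tilde\varphi}\bigr)\,ds.
\]
This integral is well defined because $\Upsilon_{g_2}(\omega)$ is convex (Lemma~\ref{Ellipticity and Concavity}(4)), so the segment stays admissible. Since $a^{i\bar j}$ is positive definite and $w(0)=0$, we conclude $w\equiv0$.

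For \eqref{eq:short-time-speed}, differentiate the flow in $t$. The quantity $u=\partial_t\varphi=F_\omega(\chi_\varphi)$ satisfies
\[
\partial_t u=F^{i\bar j}u_{i\bar j},
\]
which is a linear parabolic equation without zeroth-order term. The weak maximum principle on $X\times[0,T']$ for every $T'$ in the existence interval then shows that $\max_X u(t)$ is nonincreasing and $\min_X u(t)$ is nondecreasing, which gives exactly the stated bounds in terms of $F_\omega(\chi_{\varphi_0})$.

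The main obstacle is mild: ensuring that the solution stays in $\HH$ and that ellipticity holds uniformly. This requires $\Upsilon_{g_2}$ to be open, which holds since it is a component of $\{g_2>0\}$, together with the strict positivity $F_i>0$ from the strict proper position condition. The latter is precisely what Proposition~\ref{Strict proper position} provides.
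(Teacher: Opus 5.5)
Your proposal is correct and follows the same route as the paper. Short-time existence comes from the ellipticity of $F_\omega$ and the openness of $\HH$. Uniqueness comes from the convexity of $\HH=\Upsilon_{g_2}(\omega)$, which your averaged coefficients $a^{i\bar j}$ make explicit. The bounds on $\partial_t\varphi$ come from the maximum principle applied to $(\partial_t-F^{i\bar j}\partial_i\bar\partial_j)\partial_t\varphi=0$. You are simply supplying the details the paper calls standard; one minor caveat is that the inverse function theorem step is cleanest when applied around an approximate solution such as $\varphi_0+tF_\omega(\chi_{\varphi_0})$ rather than around the stationary $\varphi_0$.
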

\begin{proof}
The short-time existence of a solution $\varphi(t)\in\HH$ is standard by the ellipticity of $F_\omega$ and the openness of $\HH$. The uniqueness is also standard by the convexity of $\HH=\Upsilon_{g_2}(\omega)$. 

Finally, differentiating \eqref{eq:flow} in time gives
\[
 (\frac{\partial}{\partial t}-L_\varphi)\frac{\partial\varphi}{\partial t}=(\frac{\partial}{\partial t}-F_\omega^{i\bar j}\partial_i\bar\partial_j)\frac{\partial\varphi}{\partial t}=0.
\]
The maximum principle proves \eqref{eq:short-time-speed}.
\end{proof}

\section{Long-time existence of the flow}
\label{sec:long-time}

In this section, we prove the long-time existence of the flow using an argument similar to that in Lemma~4.3 and Theorem~4.4 of Chu-Lee's paper \cite{ChuLeeHypercriticalDHYM}.

Let $[0,T_{\max})$ be the maximal interval of smooth admissible
existence given by Proposition~\ref{prop:short-time}. We start with the $C^0$ bound.
\begin{lem}
\label{lemma:C^{0}}
Set $C_*:=\|F_\omega(\chi_{\varphi_0})\|_{L^\infty(X)}$. Then $\|\varphi(t)\|_{L^\infty}\le\|\varphi_0\|_{L^\infty}+C_*t$.
\end{lem}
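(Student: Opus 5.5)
The bound follows directly from the speed estimate \eqref{eq:short-time-speed} of Proposition~\ref{prop:short-time}, integrated in time. On the maximal interval $[0,T_{\max})$ the solution is smooth and admissible. The maximum principle for the linearized equation $(\partial_t-L_\varphi)\dot\varphi=0$ gives
\[
\min_X F_\omega(\chi_{\varphi_0})\le\dot\varphi\le\max_X F_\omega(\chi_{\varphi_0}).
\]
Hence $|\partial_t\varphi(x,t)|\le C_*$ for every $x\in X$ and $t\in[0,T_{\max})$, where $C_*=\|F_\omega(\chi_{\varphi_0})\|_{L^\infty(X)}$. This constant is finite because $X$ is compact and $g_2(\chi_{\varphi_0})>0$ on $X$.

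Fix $x\in X$. By the fundamental theorem of calculus in $t$, which is justified since $\varphi$ is smooth on $X\times[0,t]$ for every $t<T_{\max}$,
\[
|\varphi(x,t)|\le|\varphi_0(x)|+\int_0^t|\partial_s\varphi(x,s)|\,ds\le\|\varphi_0\|_{L^\infty}+C_*t.
\]
Taking the supremum over $x$ gives the claim.

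The only point needing care is that the maximum principle in Proposition~\ref{prop:short-time} applies on the whole interval $[0,t]$ and not merely on a short one. This holds because that proposition states \eqref{eq:short-time-speed} on every interval of smooth admissible existence. The operator $L_\varphi=F_\omega^{i\bar j}\partial_i\bar\partial_j$ is elliptic there by Lemma~\ref{Ellipticity and Concavity}(4), since $\chi_{\varphi(t)}\in\Upsilon_{g_2}(\omega)$ for all such $t$. Nothing else is required, so I expect no real obstacle.
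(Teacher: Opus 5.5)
Your proposal is correct and follows the paper's proof. The speed bound \eqref{eq:short-time-speed} from Proposition~\ref{prop:short-time} gives $|\partial_t\varphi|\le C_*$ on the whole interval of existence, and integrating in time yields the stated $L^\infty$ estimate.
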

\begin{proof}
 Proposition~\ref{prop:short-time} gives
$|\dot\varphi|\le C_*$, and hence
$\|\varphi(t)\|_{L^\infty}\le\|\varphi_0\|_{L^\infty}+C_*t$.
In particular, the potential is bounded on every finite time interval.
\end{proof}

Now we prove a lemma needed later.

\begin{lem}\label{lem:flow-linearization-upper}
There is a constant $A>0$, depending only on $C_*$, $\kappa:=\kappa_{C_*}>0$,
$n$, and the coefficients of $g_1,g_2$, such that along the flow
\[
 0<F_i(\lambda)\le\frac{A}{\lambda_i}\le A,
 \qquad
 \lambda=\lambda(\omega^{-1}\chi_\varphi).
\]
\end{lem}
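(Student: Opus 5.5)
Positivity is already available, so the real task is to bound $\lambda_iF_i$. The plan is to write $F_i$ through the partial derivatives of $g_1,g_2$, control $\lambda_i\partial_i g_j$ by the product $\prod_k\lambda_k$, and then divide by $g_2$ using the volume comparison of Proposition \ref{prop:comparison}.

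\textbf{Inputs along the flow.} By Proposition \ref{prop:short-time}, $\varphi(t)\in\HH$ for as long as the flow exists. Hence $\lambda=\lambda(\omega^{-1}\chi_\varphi)\in\Upsilon_{g_2}$, and $F_i(\lambda)>0$ by Lemma \ref{Ellipticity and Concavity}(1). The speed bound \eqref{eq:short-time-speed} gives
\[
|F(\lambda)|=|\partial_t\varphi|\le C_*
\]
along the flow. In particular $F\ge -C_*$, so Proposition \ref{prop:comparison}, with $C=C_*$, applies pointwise in eigenvalue form:
\[
g_2(\lambda)\ge\kappa\prod_{k=1}^n\lambda_k,\qquad \kappa=\kappa_{C_*}.
\]
Finally, \eqref{eq:denominator-cone-normalization} gives $\lambda_k>1$ for every $k$, which will yield the last inequality $A/\lambda_i\le A$ at the end.

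\textbf{Key estimate.} Write
\[
F_i=\frac{\partial_i g_1}{g_2}-F\,\frac{\partial_i g_2}{g_2}.
\]
Then
\[
\lambda_iF_i\le\frac{|\lambda_i\partial_i g_1|}{g_2}+C_*\frac{|\lambda_i\partial_i g_2|}{g_2}.
\]
Now $\partial_i\sigma_k(\lambda)=\sigma_{k-1}(\lambda|i)$, and $\lambda_i\sigma_{k-1}(\lambda|i)$ is a sum of $\binom{n-1}{k-1}$ products of $k$ distinct $\lambda_j$'s. Since every $\lambda_j>1$, each such product is at most $\prod_{k}\lambda_k$. Therefore
\[
|\lambda_i\partial_i g_j|\le\sum_{k=1}^n|c_k^{(j)}|\binom{n-1}{k-1}\prod_{k}\lambda_k\qquad (j=1,2).
\]
Dividing by $g_2\ge\kappa\prod_k\lambda_k$ gives $\lambda_iF_i\le A$, with
\[
A=\kappa^{-1}(1+C_*)\max_{j=1,2}\sum_{k}|c_k^{(j)}|\binom{n-1}{k-1}.
\]
This constant depends only on $C_*$, $\kappa$, $n$ and the coefficients of $g_1,g_2$, as the statement requires. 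Combining with $\lambda_i>1$ gives $0<F_i\le A/\lambda_i\le A$.

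\textbf{Main obstacle.} The step that needs care is that the lower bound $g_2\ge\kappa\prod_k\lambda_k$ is valid along the whole flow. This is exactly where the uniform lower bound $F\ge -C_*$ from the maximum principle feeds into Proposition \ref{prop:comparison}. Everything else is elementary bookkeeping with elementary symmetric functions, using $\lambda_k>1$.
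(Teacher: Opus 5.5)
Your proposal is correct and is essentially the paper's argument. You write $F_i=(\partial_i g_1-F\,\partial_i g_2)/g_2$, bound the derivatives of $g_1,g_2$ by a constant times $\prod_j\lambda_j$ using $\lambda_j>1$, and divide by $g_2\ge\kappa\prod_j\lambda_j$ from Proposition~\ref{prop:comparison}, with $|F|\le C_*$ coming from the speed bound. The only cosmetic difference is that you multiply through by $\lambda_i$ first, whereas the paper bounds $|\partial_i g_a|$ by $\prod_{j\ne i}\lambda_j$ and then divides.
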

\begin{proof}
By \eqref{eq:denominator-cone-normalization}, each $\lambda_i>1$.
By Proposition \ref{prop:comparison}, $g_2(\lambda)\ge\kappa \prod_{j=1}^n\lambda_j\ge\kappa$ along the flow.
 For $a=1,2$, \[
 |\partial_i g_a(\lambda)|=|\sum_{k=1}^n c_k^{(a)}\sigma_{k-1}(\lambda_j, j\not=i)|\le \sum_{k=1}^n |c_k^{(a)}|\binom{n-1}{k-1}\prod_{j\ne i}\lambda_j
\]
using $\lambda_j>1$ for all $j$. So using $|F|\le C_*$,
\[
F_i=\frac{\partial_i g_1-F\partial_i g_2}{g_2}\le\frac{A' \prod_{j\ne i}\lambda_j}{\kappa \prod_{j=1}^n\lambda_j}=\frac{A}{\lambda_i}
\] for a constant $A'$, and $A=\frac{A'}{\kappa}$.
\end{proof}

Now we are ready to prove the complex Hessian estimate using a similar argument to that in Theorem 4.4 of \cite{ChuLeeHypercriticalDHYM}.

\begin{prop}\label{prop:flow-finite-time-trace}
There is a constant $C>0$, depending only on the fixed data and
$\varphi_0$, such that
\begin{equation}\label{eq:flow-finite-time-trace}
 \sup_X\operatorname{tr}_\omega\chi_{\varphi(t)}
 \le e^{Ct}\sup_X\operatorname{tr}_\omega\chi_{\varphi_0}
 \qquad (0\le t<T_{\max}).
\end{equation}
Consequently, for every finite $T\le T_{\max}$, there is $C_T>0$ such that $0<\lambda_i(x,t)\le C_T$ for
$0\le t< T$.
\end{prop}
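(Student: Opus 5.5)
We apply the maximum principle to a quantity built from the largest eigenvalue, following Theorem 4.4 of \cite{ChuLeeHypercriticalDHYM}. By \eqref{eq:denominator-cone-normalization} all $\lambda_i>1$, so $\operatorname{tr}_\omega\chi_\varphi$ and $\lambda_1$ (the largest eigenvalue of $\omega^{-1}\chi_\varphi$) are comparable up to the factor $n$. We set $Q=\log\lambda_1$, or $\log\operatorname{tr}_\omega\chi_\varphi$ if we prefer to avoid the nonsmoothness of $\lambda_1$. Since no term $-B\varphi$ is needed to absorb bad terms, we allow exponential growth in $t$. The aim is to show that at a spatial maximum point of $Q$,
\[
\Big(\frac{\partial}{\partial t}-L_\varphi\Big)Q\le C,
\]
where $L_\varphi=F_\omega^{i\bar j}\partial_i\bar\partial_j$. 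Once this holds, $\sup_X Q(t)\le \sup_X Q(0)+Ct$, which is \eqref{eq:flow-finite-time-trace} up to the factor $n$; this can be absorbed by working directly with $\log\operatorname{tr}_\omega\chi_\varphi$. The bound $\lambda_i\le C_T$ follows because $\lambda_i\le\operatorname{tr}_\omega\chi_\varphi$ and $\lambda_i>1>0$.

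To carry this out, we choose normal coordinates for $\omega$ at the maximum point with $\chi_\varphi$ diagonal. We then perturb $\chi_\varphi$ in the usual way, as in Székelyhidi's or Chu–Lee's treatment, so that $\lambda_1$ becomes smooth. Next we differentiate the flow equation $\dot\varphi=F_\omega(\chi_\varphi)$ twice in the $e_1$ direction. Writing $\chi_{i\bar j}=\chi_{\varphi,i\bar j}$, this gives
\[
\partial_t\chi_{1\bar1}=F^{i\bar i}\nabla_1\nabla_{\bar1}\chi_{i\bar i}+F^{i\bar j,r\bar s}\nabla_1\chi_{i\bar j}\nabla_{\bar1}\chi_{r\bar s}+(\text{terms from the }\omega\text{-dependence of }F_\omega).
\]
The second-order term is nonpositive by Lemma \ref{Ellipticity and Concavity}(5). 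Commuting derivatives, $\nabla_1\nabla_{\bar1}\chi_{i\bar i}-\nabla_i\nabla_{\bar i}\chi_{1\bar1}$ is bounded by curvature terms of size $C(\lambda_1+\lambda_i)$ together with derivatives of the fixed form $\chi$. The terms from $\omega$ are handled by writing $F_\omega(\chi_\varphi)=F(\lambda(\omega^{-1}\chi_\varphi))$; in normal coordinates they contribute $O(\sum_i F_i\lambda_i)$.

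The main obstacle is controlling these error terms, which after dividing by $\lambda_1$ are of size $C\sum_iF_i(1+\lambda_i/\lambda_1)$, together with the gradient term $-F^{i\bar i}|\nabla_i\chi_{1\bar1}|^2/\lambda_1^2$ coming from the logarithm. Here we use Lemma \ref{lem:flow-linearization-upper}: $F_i\le A/\lambda_i\le A$, so $\sum_iF_i\le nA$ and $\sum_iF_i\lambda_i\le nA$. Hence all the linear error terms are bounded by a constant depending only on $C_*$, $\kappa_{C_*}$ and the data. Proposition \ref{prop:comparison} applies along the flow because $F\ge-C_*$ by \eqref{eq:short-time-speed}. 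For the gradient term, we keep the good term $-\sum_{i>1}\frac{F_1-F_i}{\lambda_1-\lambda_i}|\nabla_1\chi_{1\bar i}|^2$ that the perturbation argument extracts from the concavity in \eqref{eq:flow-matrix-concavity}. We use Lemma \ref{Ellipticity and Concavity}(3), namely $F_1\le F_i$, to discard or combine terms in the standard Hou–Ma–Wu fashion. If that bookkeeping fails, we would fall back on $Q=\log\operatorname{tr}_\omega\chi_\varphi$. For this choice the gradient term is handled by the classical Aubin–Yau Cauchy–Schwarz inequality $|\nabla\operatorname{tr}\chi_\varphi|^2_{F}\le\operatorname{tr}\chi_\varphi\sum F^{i\bar i}\chi^{-1}\ldots$, together with concavity, and only terms linear in $F_i$ remain. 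These are again bounded by the uniform estimate $F_i\lambda_i\le A$.
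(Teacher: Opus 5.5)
Your proposal is correct and is essentially the paper's argument. The paper computes $(\partial_t-L_\varphi)\operatorname{tr}_\omega\chi_\varphi$ with covariant derivatives of the K\"ahler metric $\omega$, so no separate $\omega$-dependence terms arise. It drops the third-order term by Lemma~\ref{Ellipticity and Concavity}(5) and bounds the curvature terms by $C\operatorname{tr}_\omega\chi_\varphi$ using $F_i\le A$ from Lemma~\ref{lem:flow-linearization-upper}. It then applies the maximum principle to $e^{-Ct}\operatorname{tr}_\omega\chi_\varphi$, which is equivalent to your $\log\operatorname{tr}_\omega\chi_\varphi$ version.

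The gradient term you single out as the main obstacle is harmless. Your $Q$ depends only on $\chi_\varphi$, so $\nabla Q=0$ at a spatial maximum and the term vanishes there. No Hou--Ma--Wu bookkeeping or Aubin--Yau inequality is needed. The latter would not be available anyway, since it uses the specific structure of $\log\det$ rather than mere concavity.
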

\begin{proof}
Write $L_\varphi=F_{\omega}^{i\bar j}\nabla_i\nabla_{\bar j}$ for the connection $\nabla_i$ with respect to $\omega$. By the same calculation as in Lemma 4.3 of \cite{ChuLeeHypercriticalDHYM}, we get
$$(\partial_{t}-L_\varphi)(\chi_{\varphi})_{i\bar{j}}=F_{\omega}^{p\bar{q},k\bar{l}}\nabla_{i}\chi_{\varphi,p\bar{q}}\nabla_{\bar{j}}\chi_{\varphi,k\bar{l}}+F_{\omega}^{p\bar{q}}\big(R_{p\bar{j}i}^{k}\chi_{\varphi,k\bar{q}}-R_{p\bar{j}\ \bar{q}}^{\ \ \ \bar{l}}\chi_{\varphi,i\bar{l}}\big).$$
Taking its $\omega$-trace gives
\[
(\partial_t-L_\varphi)\operatorname{tr}_\omega\chi_\varphi=g^{i\bar{j}}F_{\omega}^{p\bar{q},k\bar{l}}\nabla_{i}\chi_{\varphi,p\bar{q}}\nabla_{\bar{j}}\chi_{\varphi,k\bar{l}}+g^{i\bar{j}}F_{\omega}^{p\bar{q}}\big(R_{p\bar{j}i}^{k}\chi_{\varphi,k\bar{q}}-R_{p\bar{j}\ \bar{q}}^{\ \ \ \bar{l}}\chi_{\varphi,i\bar{l}}\big).
\]
By Lemma \ref{Ellipticity and Concavity}(5),
$g^{i\bar{j}}F_{\omega}^{p\bar{q},k\bar{l}}\nabla_{i}\chi_{\varphi,p\bar{q}}\nabla_{\bar{j}}\chi_{\varphi,k\bar{l}}\le 0$. By Lemma \ref{lem:flow-linearization-upper}, \[g^{i\bar{j}}F_{\omega}^{p\bar{q}}\big(R_{p\bar{j}i}^{k}\chi_{\varphi,k\bar{q}}-R_{p\bar{j}\ \bar{q}}^{\ \ \ \bar{l}}\chi_{\varphi,i\bar{l}}\big)\le C_\omega|\chi_\varphi|_\omega\le C \operatorname{tr}_\omega\chi_\varphi.\]

The maximum principle applied to $e^{-Ct}\operatorname{tr}_\omega\chi_\varphi$ on
$X\times[0,T]$, for finite $T<T_{\max}$, proves
\eqref{eq:flow-finite-time-trace}. Since
$\lambda_i\le\operatorname{tr}_\omega\chi_\varphi$, it also gives the required eigenvalue bound.
\end{proof}

As a corollary, \[\lambda(\omega^{-1}\chi_\varphi)\in\{\lambda\in\overline{\Upsilon_{g_2}}:
1\le\lambda_i\le C_T\text{ for all }i,\ g_2(\lambda)\ge\kappa\}\subset\Upsilon_{g_2}.\] By compactness, we get the uniform ellipticity and concavity of $F$ on $X\times[0,T)$ by Lemma \ref{Ellipticity and Concavity} for any finite $T\le T_{\max}$. Then the standard parabolic Evans-Krylov estimate implies the bound on the $C^{\alpha}$ space-time semi-norm of $\ddc\varphi$. The standard parabolic Schauder estimate implies the space-time $C^{k,\alpha}$ estimates. Then the Arzel\`a–Ascoli theorem, together with the short-time existence, implies the long-time existence.

\section{Coerciveness implies existence}

In this section, we prove that the coerciveness of the $J_1$ functional on the space $\mathcal{H}$ with $I_2=0$ implies the existence. 

We first generalise Definition 3.3 of \cite{ChenJEquationDHYM} to the ideal Fang-Ma-G{\aa}rding setting:

\begin{defn}
\label{DefnDegenerateConeCondition}
Suppose $\omega$ is a K\"{a}hler form, and $\Theta$ is a closed, positive $(1,1)$ current. Suppose that $\Upsilon\subset\Gamma_n^+$ is convex, invariant under
coordinate permutations, and passes PRT. Let $\bar\Upsilon$ be its closure. We say that $\Theta\in\bar\Upsilon(\omega)$ if for any coordinate chart $U$ with local potential $\Theta|_U=\sqrt{-1}\partial\bar\partial\varphi_U$, we have $\sqrt{-1}\partial\bar\partial\varphi_{U,r}\in \bar\Upsilon(\omega_U)$ on $U_r:=\{x\in X:B(x,r)\subset U\}$ for any K\"{a}hler metric $\omega_U$ on $U$ with constant coefficients satisfying $\omega_U\leq\omega$, where $\varphi_{U,r}$ is the convolution of $\varphi_U$ with the mollifier $\rho_r$.
\end{defn}

Now we prove a useful lemma.
\begin{lem}
\label{lem:useful lemma}
    For any $\varphi_{0},\varphi_{1},\varphi_{2}\in \mathcal{H}$ satisfying $\varphi_{0}\le\varphi_{1}\le\varphi_{2}$ we have $$d_{1}(\varphi_{0},\varphi_{1})\le d_{1}(\varphi_{0},\varphi_{2}).$$
    \begin{proof}
        For $i=0,1,2$, let $\varphi_{\epsilon,i}$ be the $\epsilon$-geodesic between $\varphi_{0}$ and $\varphi_{i}$ as in \cite{ChenGhoshNieGeodesics}. We also denote the corresponding $L^{1}$-energy by \[E_{1}^{\epsilon,i}(t)=\int_X |\frac{d}{dt}\varphi_{\epsilon,i}(t)|g_2(\chi_{\varphi_{\epsilon,i}(t)}).\] Then by the comparison principle,
        $$\varphi_{\epsilon,0}\le \varphi_{\epsilon,1}\le\varphi_{\epsilon,2}.$$
        This implies 
        $$\frac{d}{dt}\varphi_{\epsilon,0}(0)\le\frac{d}{dt} \varphi_{\epsilon,1}(0)\le\frac{d}{dt}\varphi_{\epsilon,2}(0).$$
        Now using Proposition 4.3 of \cite{ChenGhoshNieGeodesics}, we have 
        \[
        |\frac{d}{dt}\varphi_{\epsilon,0}(0)|\le C\epsilon^{2} \implies |\frac{d}{dt}\varphi_{\epsilon,1}(0)|\le |\frac{d}{dt}\varphi_{\epsilon,2}(0)| + C\epsilon^{2} \implies
        E_{1}^{\epsilon,1}(0)\le E_{1}^{\epsilon,2}(0)+C\epsilon^{2}.\]
        Now letting $\epsilon\rightarrow 0$ and using Theorem $8.3(2)$ of \cite{ChenGhoshNieGeodesics}, we obtain
        $$d_{1}(\varphi_{0},\varphi_{1})\le d_{1}(\varphi_{0},\varphi_{2}).$$
        This completes the proof.
    \end{proof}
\end{lem}
Now we are ready to prove the following proposition using the flow:
\begin{prop}
Suppose that $f_1$ is strictly right-Noetherian and  $f_2$ is right-Noetherian. $f_2\prec f_1$ strongly strictly, and $f_2\idealprec f_1$. Then if $\int_X g_1(\chi_\varphi)=0$ and $J_1(\varphi)\ge \delta d_1(0,\varphi)-C$ for all $\varphi\in\mathcal{H}=\mathcal{H}_2$ such that $I_2(\varphi)=0$, there exists a current $\chi_\varphi\in\bar\Upsilon_{g_1}(\omega)$ with zero Lelong number.
\label{coercive-Lelong-0}
\end{prop}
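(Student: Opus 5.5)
We will run the flow \eqref{eq:flow} from $\varphi_0=0\in\HH$; by Section~\ref{sec:long-time} it exists for all $t\ge0$. The plan has four steps: monotonicity of $J_1$ along the flow, normalization, a weak limit, and identification of the limit's cone condition and Lelong numbers.

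\textbf{Monotonicity.} By \eqref{functional properties},
\[
\frac{d}{dt}I_2(\varphi_t)=\int_X F_\omega(\chi_{\varphi_t})\,g_2(\chi_{\varphi_t})=\int_X g_1(\chi_{\varphi_t})=V_1=0,
\]
so $I_2(\varphi_t)=0$ for all $t$. On the other hand,
\[
\frac{d}{dt}J_1(\varphi_t)=-\int_X F_\omega\, g_1(\chi_{\varphi_t})=-\int_X F_\omega^2\, g_2(\chi_{\varphi_t})\le 0.
\]
Hence $J_1(\varphi_t)\le J_1(0)=0$. Coerciveness then gives $d_1(0,\varphi_t)\le C/\delta$ uniformly in $t$. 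Integrating in $t$, we also get
\[
\int_0^\infty\!\!\int_X F_\omega^2\, g_2\,dt\le C,
\]
which we keep for the second step.

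\textbf{Normalization and a weak limit.} Proposition~\ref{prop:short-time} gives $|F_\omega|\le C_*$ along the flow. Proposition~\ref{prop:comparison} with $C=C_*$ then gives $g_2(\chi_{\varphi_t})\ge\kappa\chi_{\varphi_t}^n\ge \kappa\omega^n$, using $\lambda_i>1$ from \eqref{eq:denominator-cone-normalization}. Lemma~\ref{d1-sup-comparison}(2) now bounds $\sup_X\varphi_t$, while part (1) together with $I_2=0$ gives $\sup_X\varphi_t\ge0$.

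Set $u_t=\varphi_t-\sup_X\varphi_t$. These are $\chi$-psh, in fact $(\chi-\omega)$-psh since $\chi_{\varphi}>\omega$, with $\sup_X u_t=0$, so they are compact in $L^1$. Along a sequence $t_k\to\infty$ we choose, by the integrated bound above, times with $\int_X F_\omega^2 g_2(\chi_{\varphi_{t_k}})\to0$. We pass to a limit $\varphi$ with $\chi_{\varphi_{t_k}}\to\chi_\varphi$ weakly.

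To get finite energy, and thus zero Lelong number, we use that $d_1$ bounds are uniform. Via Lemma~\ref{lem:useful lemma} and the comparison $g_2\ge\kappa\chi_\varphi^n$, this gives uniform bounds on the Monge--Amp\`ere energy $\int_X(-u_t)\chi_{u_t}^n$, or an $\mathcal E^1$-type energy with respect to $\chi-\omega$. The limit then lies in $\mathcal E^1$, and \cite{GuedjZeriahiFiniteEnergy} shows it has zero Lelong numbers.

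\textbf{Degenerate cone condition.} We must show $\chi_\varphi\in\bar\Upsilon_{g_1}(\omega)$. At time $t_k$ the form $\chi_{\varphi_{t_k}}$ lies in $\Upsilon_{g_2}(\omega)$ with $F_\omega\to0$ in the weighted $L^2$ sense. Since $\{F>-\eta\}\cap\Upsilon_{g_2}=\Upsilon_{g_1+\eta g_2}$, which by Lemma~\ref{Linear-SSPrec} is the component of a right-Noetherian polarization, we first show that for each fixed $\eta$ the limit lies in $\bar\Upsilon_{g_1+\eta g_2}(\omega)$, and then let $\eta\to0$.

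The difficulty is that $F_\omega\ge-\eta$ holds only off sets of small $g_2$-measure, and $g_2$-measure dominates $\omega^n$-measure. Our plan is to use concavity of the cone in Definition~\ref{DefnDegenerateConeCondition}. In a chart, the mollification $\ddc\varphi_{U,r}$ is a limit of averages of $\chi_{\varphi_{t_k}}$ over balls. On the bad set, $\chi_{\varphi_{t_k}}$ is still in $\Upsilon_{g_2}(\omega)$, and it can be corrected: adding $s\,\omega$ moves it into $\Upsilon_{g_1+\eta g_2}$ for $s$ controlled by the eigenvalue bound. Averaging with the convexity of Proposition~\ref{convexmatrix} and PRT via Proposition~\ref{PRTmatrix} (for $\omega_U\le\omega$), the error from the bad set tends to $0$ as $k\to\infty$.

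\textbf{Main obstacle.} The hardest point is this correction, because the eigenvalues are not uniformly bounded as $t\to\infty$. Proposition~\ref{prop:flow-finite-time-trace} grows exponentially in $t$. The first thing we would try is a Chebyshev estimate: on $\{F_\omega<-\eta\}$, bound $\int\operatorname{tr}_\omega\chi_\varphi\,\omega^n$ by the $L^2$ smallness of $F_\omega$ weighted by $g_2\ge\kappa\chi^n_\varphi$. We would combine this with the uniform bound $|F_\omega|\le C_*$ and the lower bound $\lambda_i>1$ to control the mass of $\chi_{\varphi_{t_k}}$ on the bad set uniformly.

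If that fails, we would use the cone-inclusion $\{F\ge -C_*\}\subset\delta\mathbf 1+\Upsilon_{g_2}$ from Proposition~\ref{prop:comparison}. The idea is to interpolate linearly between $\chi_{\varphi_{t_k}}$ and a fixed element of $\Upsilon_{g_1}$, which is convex, so that the small-measure bad region contributes only a small perturbation in the weak limit.
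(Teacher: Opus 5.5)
Your first two steps follow the paper: running the flow from $0$, preserving $I_2$ and decreasing $J_1$, the uniform $d_1$ bound, normalizing by $\sup_X\varphi_t$, choosing $t_k$ with $\int_X F_\omega^2g_2\to0$, and finite energy plus \cite{GuedjZeriahiFiniteEnergy}. Your energy step needs one more ingredient than Lemma~\ref{lem:useful lemma} and the volume comparison. The paper uses $d_1(u_t,0)\ge d_1(u_t,u_t/2)\ge\frac12\int_X|u_t|\,g_2(\chi_{u_t/2})$, which is Theorem 1.3(2) of \cite{ChenGhoshNieGeodesics}. It then applies Proposition~\ref{prop:comparison} at $u_t/2$, which is allowed because $F$ is concave on the convex set $\HH$, and finishes with $\chi_{u_t/2}^n\ge2^{-n}\chi_{u_t}^n$.

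\textbf{The gap.} You never prove that the limit satisfies the degenerate cone condition; you call it the main obstacle and list tentative fixes. You only use convexity of the cones $\Upsilon_{g_1+\eta g_2}$ with a good/bad-set splitting. The missing idea is that $F_{\omega_U}$ itself is concave on the convex set $\Upsilon_{g_2}(\omega_U)$ (Lemma~\ref{Ellipticity and Concavity}(5)). The mollified Hessian is a $\rho_r$-average of $\chi_{\varphi_j}$, so Jensen's inequality, monotonicity for $\omega_U\le\omega$, and Cauchy--Schwarz give directly
\[
F_{\omega_U}\bigl(\ddc((\varphi_{\chi,U}+u_j)*\rho_r)\bigr)\ge F_{\omega_U}(\chi_{\varphi_j})*\rho_r\ge F_\omega(\chi_{\varphi_j})*\rho_r\ge-C_r\|F_\omega(\chi_{\varphi_j})\|_{L^2(\omega^n)}\to0 .
\]
Smooth convergence of the mollifications on $U_r$ then gives $F_{\omega_U}\ge0$ in the limit. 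The bad set enters only through the value of $F$, which is controlled in $L^2$. No $\eta$-approximation and no eigenvalue upper bound are needed.

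Your diagnosis of the obstacle is also wrong within your own route: the shift you need on the bad set is uniform. Along the flow $\lambda_i>1$. By PRT, $\Upsilon_{g_1+\eta g_2}\supset t\mathbf 1+\overline{\Gamma_n^+}$ for every $t>r(f_1+\eta f_2)$. Hence adding $s\,\omega$ with any fixed $s>r(f_1+\eta f_2)-1$ puts every point of $B_k=\{F_\omega(\chi_{\varphi_{t_k}})<-\eta\}$ into $\Upsilon_{g_1+\eta g_2}(\omega)$. Proposition~\ref{PRTmatrix} then puts it into $\Upsilon_{g_1+\eta g_2}(\omega_U)$, independently of how large the eigenvalues are. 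The mollified form plus an error of size $O(s\|\rho_r\|_{L^\infty}\,\omega^n(B_k))$ then lies in the convex set $\Upsilon_{g_1+\eta g_2}(\omega_U)$. Chebyshev gives $\omega^n(B_k)\le\eta^{-2}\kappa^{-1}\int_XF_\omega^2g_2\to0$, with $\kappa$ from $g_2\ge\kappa\omega^n$. Your approach can therefore be completed without any bound on $\operatorname{tr}_\omega\chi_\varphi$, but as written the proposal stops short of the key step.
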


\begin{proof}
Normalize the initial potential
by $I_2(\varphi_0)=0$. Along \eqref{eq:flow},
\[
 \frac{d}{dt}I_2(\varphi_t)=\int_Xg_1(\chi_{\varphi_t})=0,
 \qquad
 \frac{d}{dt}J_1(\varphi_t)
       =-\int_X F_\omega(\chi_{\varphi_t})^2g_2(\chi_{\varphi_t})\le0.
\]
Coercivity therefore yields $d_1(0,\varphi_t)\le C$. We have used \eqref{functional properties} in the above calculation.

Let $s_t=\sup_X\varphi_t$ and $u_t=\varphi_t-s_t\le0$. By Lemma \ref{d1-sup-comparison}, $s_t\le C$ and \[d_1(0,u_t)=d_1(s_t,\varphi_t)\le C.\] By Lemma \ref{lem:useful lemma} in the second inequality, Theorem $1.3(2)$ of \cite{ChenGhoshNieGeodesics} in the third inequality, the convexity of $\mathcal{H}$, the concavity of $F$, and Proposition \ref{prop:comparison} in the fourth inequality, and the inequality $(\chi+\chi_{u_t})^n\ge \chi_{u_t}^n$ in the fifth inequality, we see that
\[
 C\ge d_1(u_t,0)\ge d_1(u_t,u_t/2)\ge\frac12\int_X|u_t|g_2(\chi_{u_t/2})\ge\frac{\kappa_C}{2}\int_X|u_t|\chi_{u_t/2}^n
      \ge\kappa_C2^{-n-1}\int_X|u_t|\chi_{u_t}^n.
\]
 We know that $\{u_{t}\in \Psh(X,\chi):\sup_{X}u_{t}=0\}$ is compact in the weak topology of $L^{1}(X,\chi)$.  Consequently  every subsequential weak $L^1$ limit $u_\infty$ belongs to the \emph{classical}
space $\mathcal E^1(X,\chi)$ and has zero Lelong numbers by Corollaries 2.7 and 1.8 of \cite{GuedjZeriahiFiniteEnergy}.

On a long-time solution, the lower bound for $J_1$ also gives
\[
 \int_T^\infty\int_X
       F_\omega(\chi_{\varphi_t})^2g_2(\chi_{\varphi_t})\,dt
       \longrightarrow0\qquad(T\to\infty).
\]
Hence there exist $t_j\to\infty$ such that, writing $u_j=u_{t_j}$,
\[
 \int_XF_\omega(\chi_{u_j})^2g_2(\chi_{u_j})\longrightarrow0
\implies \int_XF_\omega(\chi_{u_j})^2\omega^n\longrightarrow0.
\]
The implication uses $g_2(\chi_{u_j})\ge\kappa_C\omega^n$.

Now for any coordinate chart $U$ with local potential $\chi|_U=\sqrt{-1}\partial\bar\partial\varphi_{\chi,U}$, and any K\"{a}hler metric $\omega_U$ on $U$ with constant coefficients satisfying $\omega_U\leq\omega$, we have \[F_{\omega_U}(\sqrt{-1}\partial\bar\partial(\varphi_{\chi,U}+u_j))=F_{\omega_U}(\chi_{\varphi_j})\ge F_{\omega}(\chi_{\varphi_j})\] by Proposition \ref{Strict proper position} and the Courant-Fischer-Weyl min-max principle. Let $\rho_r$ be the mollifier; by the concavity of $F_{\omega_U}$ and the Cauchy-Schwarz inequality, we get
\[
 F_{\omega_U}(\ddc((\varphi_{\chi,U}+u_j)*\rho_r))
 \ge F_{\omega_U}(\chi_{\varphi_j})*\rho_r \ge F_{\omega}(\chi_{\varphi_j})*\rho_r
 \ge-C_r\|F_{\omega}(\chi_{\varphi_j})\|_{L^2}
\]
on
$U_r:=\{x\in X:B(x,r)\subset U\}$. Letting $j\to\infty$ and taking the limit, we see that \[
 F_{\omega_U}(\ddc((\varphi_{\chi,U}+u_\infty)*\rho_r))\ge 0.
\] So $\chi_{u_\infty}\in\bar\Upsilon_{g_1}(\omega)$ in the sense of Definition \ref{DefnDegenerateConeCondition}.
\end{proof}

Now we start to use the perturbation technique. Choose $\epsilon_4$ such that for $f_4=f_2(x)+\epsilon_4 f_1(x)$, the space 
\[
 \HH_4=\{\varphi\in\HH_2:
               1+\epsilon_4 F_\omega(\chi_\varphi)>0\}
 \subseteq\HH_2
\] is non-empty. Without loss of generality, assume that $0\in\HH_4$. Then we choose $\epsilon_3$ sufficiently small that \[f_3=f_1(x)-\epsilon_3f_2(x)+\epsilon_3 v_2\] and $f_4$ satisfy all the conditions in Prop \ref{coercive-Lelong-0}. Then we get a current $\chi_{u_\infty}\in\bar\Upsilon_{g_3}(\omega)$ with zero Lelong number. Now we take sufficiently small charts $U$ such that there exists a K\"ahler form $\omega_U$ with constant coefficients such that $(1-\delta_{\epsilon_3})\omega\le\omega_U\le\omega$ on $U$, where $\delta_{\epsilon_3}$ is the constant in Lemma \ref{lem:f3-strict-cone-inclusion}. Then the local regularization method in \cite{BlockiKolodziejRegularization} provides a smooth potential $\underline{\varphi}$ in the class $[\chi+\delta_{\epsilon_3}\omega]$ such that
\[\chi+\delta_{\epsilon_3}\omega+\ddc\underline{\varphi}\in \bar\Upsilon_{g_3}(\omega_U)\subset \bar\Upsilon_{g_3}((1-\delta_{\epsilon_3})\omega)= (1-\delta_{\epsilon_3})\bar\Upsilon_{g_3}(\omega).\]
So \[\chi+\ddc\underline{\varphi}\in (1-\delta_{\epsilon_3})\bar\Upsilon_{g_3}(\omega)-\delta_{\epsilon_3}\omega\subset(1-\delta_{\epsilon_3})\bar\Upsilon^1_{g_3}(\omega)-\delta_{\epsilon_3}\omega\subset\Upsilon^1_{g_1}(\omega)\] by Lemma \ref{lem:f3-strict-cone-inclusion}. Finally, we solve the equation $g_1(\chi_\varphi)=0$ using Lin's existence result \cite{LinInverseSigmaSolvability}.

\section{Existence implies coercivity}

In this section, we assume that $\varphi_*\in\mathcal{H}$ solves the equation $g_1(\chi_{\varphi_*})=0$ such that $\varphi_*$ is normalized by $I_2(\varphi_*)=0$. By the triangle inequality of the $d_1$ distance, we can, without loss of generality, assume that $\varphi_*=0$.

We first prove another useful perturbation result:

\begin{lem}\label{lem:derivative-perturbation}
There exists $\eta>0$ such that
$f_2\prec f_1+\eta f_2'$.
\end{lem}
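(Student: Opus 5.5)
The plan is to verify the criterion of Theorem \ref{PositiveUnivariable} for the pair $(f_2,\tilde f)$, where $\tilde f:=f_1+\eta f_2'$. Since $f_2'$ has degree $n-1$, $\tilde f$ is still monic of degree $n$. For each $0\le k\le n-1$ put $r_k=r(f_2^{(k)})$ and
\[
h_k:=\frac{\tilde f^{(k)}}{f_2^{(k)}}=\frac{f_1^{(k)}}{f_2^{(k)}}+\eta\,\frac{f_2^{(k+1)}}{f_2^{(k)}}\quad\text{on }(r_k,\infty).
\]
The goal is the analogue of the argument in Lemma \ref{Linear-SSPrec}: show that for $\eta$ small each $h_k$ is strictly increasing on $(r_k,\infty)$, tends to $-\infty$ at $r_k$, and tends to $1$ at $\infty$. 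From this, $\tilde f^{(k)}$ has a unique root $R_k>r_k$ in $(r_k,\infty)$, $\tilde f^{(k)}<0$ on $(r_k,R_k)$, and $\tilde f^{(k)}>0$ afterwards. Since $f_2^{(k)}>0$ on $(r_k,\infty)$, the root $R_k$ is also the largest real root $r(\tilde f^{(k)})$.

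Right-Noetherianity of $\tilde f$ then follows exactly as in Lemma \ref{Linear-SSPrec}. Write
\[
\tilde f^{(k+1)}=h_k' f_2^{(k)}+h_k f_2^{(k+1)}.
\]
On $[R_k,\infty)$ we have $h_k\ge0$, $h_k'>0$, $f_2^{(k)}>0$ and $f_2^{(k+1)}>0$ (because $r_{k+1}\le r_k$), so $\tilde f^{(k+1)}>0$ there and hence $r(\tilde f^{(k+1)})<R_k$. Theorem \ref{PositiveUnivariable} then gives $f_2\prec\tilde f$. The top case $k=n-1$ is elementary, since both polynomials are linear there.

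Three properties of $h_k$ must be checked. Write $m_k$ for the multiplicity of $r_k$ as a root of $f_2^{(k)}$.
\begin{itemize}
\item \emph{Endpoint behaviour at $r_k$.} We have $f_2^{(k+1)}/f_2^{(k)}=m_k/(x-r_k)+O(1)$ there. By Proposition \ref{Strict proper position}(iii) applied to the strict pair $f_2^{(k)}\prec f_1^{(k)}$, $f_1^{(k)}/f_2^{(k)}\sim c_k(x-r_k)^{-(m_k-q_k)}$ with $c_k<0$ and $m_k-q_k\ge1$. If $m_k-q_k\ge2$ the negative term dominates for any $\eta$. If $m_k-q_k=1$ the leading coefficient of $h_k$ is $c_k+\eta m_k$, which is negative once $\eta<|c_k|/m_k$.
\item \emph{Monotonicity near $r_k$.} The same expansions give $(f_1^{(k)}/f_2^{(k)})'\sim -(m_k-q_k)c_k(x-r_k)^{-(m_k-q_k)-1}$, which is positive, while $\eta(f_2^{(k+1)}/f_2^{(k)})'\sim -\eta m_k(x-r_k)^{-2}$. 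So $h_k'>0$ near $r_k$ for $\eta$ small, by the same dichotomy on $m_k-q_k$.
\item \emph{Behaviour near $\infty$.} Strict proper position of the $(n-1)$st derivatives gives $c_{n-1}^{(1)}<c_{n-1}^{(2)}$, as in the proof of Proposition \ref{prop:f3-f4-hypotheses}. Hence $(f_1^{(k)}/f_2^{(k)})'\sim a_k x^{-2}$ with $a_k=(n-k)(c_{n-1}^{(2)}-c_{n-1}^{(1)})>0$. The perturbation satisfies $\eta(f_2^{(k+1)}/f_2^{(k)})'\sim-\eta(n-k)x^{-2}$. So $h_k'>0$ near $\infty$ provided $\eta<c_{n-1}^{(2)}-c_{n-1}^{(1)}$, and clearly $h_k\to1$.
\end{itemize}

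On the remaining compact interval $[r_k+\rho,M]$, the derivative $(f_1^{(k)}/f_2^{(k)})'$ is continuous and strictly positive by Remark \ref{strict-prec-1d}, so it has a positive minimum. The derivative of $f_2^{(k+1)}/f_2^{(k)}$ is bounded there. A small $\eta$ therefore keeps $h_k'>0$ on this interval too.

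The main obstacle is making the endpoint estimates uniform enough that one $\eta$ works simultaneously near $r_k$, near $\infty$, and on the compact middle. I would organise this by first fixing $\rho,M$ from the asymptotic comparisons with constants independent of $\eta\le\eta_0$, then choosing $\eta$ below the compact-interval threshold, and finally taking the minimum over $k=0,\dots,n-1$. The only delicate case is $m_k-q_k=1$, where the two singular terms are of the same order and the smallness of $\eta$ relative to $|c_k|/m_k$ is genuinely needed.
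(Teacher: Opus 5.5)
Your proof is correct, but it is organised differently from the paper's.

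\textbf{The paper's route.} The paper proves only the local sign condition $f_1^{(k)}+\eta f_2^{(k+1)}<0$ on $(r_k,r_k+\delta_k)$ for each $0\le k<n$. This is exactly your endpoint dichotomy, with the same smallness requirement $\eta<|c_k|/m_k$ when $q_k=m_k-1$. It then reuses the downward induction from the proof of \eqref{eq:strict-proper-perturbation}. Suppose $\tilde f^{(k+1)}$ is negative on $(r_{k+1},R_{k+1})$ and positive beyond. Since $r_{k+1}\le r_k$, $\tilde f^{(k)}$ decreases on $(r_k,R_{k+1})$ when that interval is nonempty, and increases on $(\max(r_k,R_{k+1}),\infty)$. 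Negativity just to the right of $r_k$ therefore forces a unique root $R_k>\max(r_k,R_{k+1})$ with $\tilde f^{(k)}<0$ on $(r_k,R_k)$. The base case $k=n-1$ is linear.

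\textbf{What the paper's route avoids.} It needs no analysis at infinity, no compact middle interval, and no uniformity bookkeeping in $\rho$ and $M$. The constraint $\eta<c^{(2)}_{n-1}-c^{(1)}_{n-1}$ that you derive at infinity reappears there simply as the local condition at level $k=n-1$.

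\textbf{What your route buys.} You model the argument on Lemma \ref{Linear-SSPrec} and show each quotient $h_k$ is strictly increasing on the whole half-line. This is more work, but it gives more. The roots $r(\tilde f^{(k)})$ are strictly decreasing in $k$. Via Proposition \ref{Strict proper position}(ii), you also get strict position at every derivative level, i.e.\ $f_2\prec\tilde f$ strongly strictly. Your argument is also fully spelled out, whereas the paper leaves the adaptation of the induction to the reader.

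Only $f_2\prec\tilde f$ is used afterwards, in Lemma \ref{lem:solution-perturbation-ellipticity}, so this extra strength is not needed there.
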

\begin{proof}
For $0\le k<n$, put $r_k=r(f_2^{(k)})$; then by Proposition~\ref{Strict proper position}, there exists $q_k<m_k$ such that
\[f_2^{(k)}(x)\sim \frac{f_2^{(k+m_k)}(r_k)}{m_k!}(x-r_k)^{m_k}\] and \[f_1^{(k)}(x)\sim \frac{f_1^{(k+q_k)}(r_k)}{q_k!}(x-r_k)^{q_k}\] for $x\downarrow r_k$, where the coefficients $f_2^{(k+m_k)}(r_k)>0$ and $f_1^{(k+q_k)}(r_k)<0$. Consequently, for a sufficiently small common $\eta>0$, each derivative of $f_1+\eta f_2'$ satisfies
\begin{equation}\label{eq:derivative-perturbation-sign}
 f_1^{(k)}(x)+\eta f_2^{(k+1)}(x)<0\quad\text{for }r_k<x<r_k+\delta_k,
 \qquad 0\le k<n,
\end{equation}
for some $\delta_k>0$. Then we can apply the same induction argument as in the proof of \eqref{eq:strict-proper-perturbation} in Proposition \ref{Strict proper position} to prove that $f_1+\eta f_2'$ is right-Noetherian, and $f_2\prec f_1+\eta f_2'$.
\end{proof}

Let $g_{2}'(\chi_\varphi)$ be the formal derivative of $g_{2}(\chi_\varphi)$ as in \cite{ChenGhoshNieGeodesics}. It is positive on $\mathcal{H}$, by the derivative positivity in \cite[Lemma 5.1]{FangMaGarding}, or
\cite[Lemma 2.19]{ChenGhoshNieGeodesics}.
Define the analogy of Aubin's $I-J$ functional by
\begin{equation}\label{eq:based-aubin-energy}
 \mathcal A(\varphi)=I_2(\varphi)-\int_X \varphi\,g_2(\chi_\varphi).
\end{equation}
This functional vanishes at $\varphi=0$ and is invariant under adding constants. 
\begin{lem}\label{lem:simple}
    If $q(t)$ is a non-negative polynomial on $[0,1]$ of degree at most $n-1$, then there is a number $a_{n}>0$ independent of $q$ such that 
    \[\int_{0}^{1}tq(t)dt\ge a_{n}\int_{0}^{1}q(t)dt.\]
    \end{lem}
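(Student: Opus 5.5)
The plan is a compactness argument on the finite-dimensional space $\mathcal P_{n-1}$ of real polynomials of degree at most $n-1$. If $\int_0^1 q\,dt=0$ for a nonnegative polynomial $q$, then $q\equiv 0$ and the inequality is trivial. Otherwise both sides are positively homogeneous of degree one in $q$, so we may normalize $\int_0^1 q(t)\,dt=1$. The claim then becomes
\[
a_n:=\inf\Big\{\int_0^1 t\,q(t)\,dt:\ q\in\mathcal P_{n-1},\ q\ge 0\text{ on }[0,1],\ \int_0^1q\,dt=1\Big\}>0 .
\]

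First I will show that the constraint set $K$ is compact in $\mathcal P_{n-1}\cong\RR^n$. Closedness is clear, since both the pointwise condition $q\ge0$ on $[0,1]$ and the normalization are closed conditions. For boundedness, note that $\|q\|_{L^1[0,1]}=\int_0^1 q\,dt=1$ for nonnegative $q$. The $L^1[0,1]$ norm is a genuine norm on $\mathcal P_{n-1}$, because a polynomial vanishing almost everywhere on $[0,1]$ is identically zero. All norms on a finite-dimensional space are equivalent, so $K$ is bounded in coefficient norm.

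Next, the functional $q\mapsto\int_0^1 t\,q(t)\,dt$ is linear, hence continuous, so it attains its minimum on $K$ at some $q_0$. Since $t\,q_0(t)\ge0$, the value $\int_0^1 t\,q_0\,dt=0$ would force $t\,q_0(t)\equiv0$ on $[0,1]$. That gives $q_0\equiv0$, which contradicts $\int_0^1 q_0\,dt=1$. Therefore $a_n>0$, and $a_n$ depends only on $n$.

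I expect no real obstacle. The only point that needs care is the use of finite-dimensionality to get compactness of $K$; the argument fails without a degree bound, since mass can concentrate near $t=0$. If an explicit constant were wanted, one could instead use the Markov-type inequality $\sup_{[0,1]}|q|\le n^2\int_0^1|q|\,dt$ (up to a constant) to bound $\int_0^\delta q\,dt$ for small $\delta$, and then $\int_0^1 tq\,dt\ge\delta\int_\delta^1 q\,dt$. The compactness proof is enough here.
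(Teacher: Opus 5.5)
Your proof is correct, but it takes a different route from the paper's. You argue softly: the normalized set $K$ is compact in $\mathcal P_{n-1}$, by equivalence of the $L^1[0,1]$ norm with the coefficient norm. The linear functional $q\mapsto\int_0^1 tq\,dt$ then attains its minimum on $K$, and that minimum cannot be $0$ because $tq_0\equiv0$ forces $q_0\equiv0$.

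The paper instead gives a direct quantitative estimate, which is essentially the alternative you sketch at the end. Norm equivalence gives $\|q\|_{L^\infty[0,1]}\le C_n\int_0^1q\,dt=C_n$, so $\int_0^\delta q\,dt\le\delta C_n$. Hence $\int_0^1 tq\,dt\ge\delta\int_\delta^1q\,dt\ge\delta(1-\delta C_n)$, and the choice $\delta=\min\{\frac12,\frac1{2C_n}\}$ yields $a_n\ge\delta/2$.

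Both arguments use the degree bound only through comparing $L^1$ with another norm on a finite-dimensional space. The paper's version skips the step of extracting a minimizer and gives an explicit $a_n$ in terms of the $L^\infty$--$L^1$ constant $C_n$, which Nikolskii/Markov-type inequalities make concrete, as you note. Your version identifies $a_n$ as the sharp constant, attained by an extremal polynomial, but gives no quantitative information about its size.
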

    \begin{proof}
       After normalization, assume that $\int_{0}^{1}q(t)dt= 1$. Note that the space of polynomials of degree at most $n-1$ is finite-dimensional, and any two norms on a finite-dimensional space are equivalent. We therefore have a constant $C_{n}>0$ such that 
        \[\|q\|_{L^{\infty}([0,1])}\le C_{n}\|q\|_{L^{1}([0,1])} \le C_{n}.\]
        Fix any $\delta>0$; then 
        \begin{align*}
            \int_{0}^{1}tq(t)dt\ge \int_{\delta}^{1}tq(t)dt\ge \delta\int_{\delta}^{1}q(t)dt= \delta(1-\int_{0}^{\delta}q(t)dt)\ge \delta(1-\delta C_{n}).
        \end{align*}
        Now choose $\delta=\min\{\frac{1}{2},\frac{1}{2C_{n}}\}$. Then $1-\delta C_{n}\ge \frac{1}{2}$. This implies the required estimate 
        \[\int_{0}^{1}tq(t)dt\ge \frac{\delta}{2}>0.\]
    \end{proof}

We first prove the coerciveness of $\mathcal{A}(\varphi)$.

\begin{lem}\label{lem:based-energy-coercivity}
There exist $a>0$ and $b\ge0$, depending only on the fixed data
and $\varphi_*$, such that
\begin{equation}\label{eq:based-energy-coercivity}
 \mathcal A(\varphi)\ge a\,d_1(0,\varphi)-b
 \quad\text{whenever }I_2(\varphi)=0.
\end{equation}
Moreover, $\mathcal A(\varphi)\ge0$ for every $\varphi\in\HH$.
\end{lem}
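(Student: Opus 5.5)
The plan is to write $\mathcal A$ as a weighted integral along the affine path $t\mapsto t\varphi$. This path stays in $\HH$ because $\HH$ is convex and $0,\varphi\in\HH$. Set
\[
h(t)=\int_X\varphi\,g_2(\chi_{t\varphi}),\qquad t\in[0,1].
\]
By \eqref{functional properties} along $t\varphi$ we have $I_2(\varphi)=\int_0^1h(t)\,dt$, and by definition $\int_X\varphi\,g_2(\chi_\varphi)=h(1)$. Integrating by parts in $t$ then gives
\[
\mathcal A(\varphi)=\int_0^1h(t)\,dt-h(1)=\int_0^1 t\,(-h'(t))\,dt.
\]

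Next I compute $h'(t)$. Differentiating $g_2(\chi_{t\varphi})$ in $t$ gives $\ddc\varphi$ wedged with the formal derivative $g_2'(\chi_{t\varphi})$, an $(n-1,n-1)$-form, up to a dimensional constant. Integrating by parts on $X$, using that $g_2'(\chi_{t\varphi})$ is closed, yields
\[
-h'(t)=c_n\int_X\sqrt{-1}\partial\varphi\wedge\bar\partial\varphi\wedge g_2'(\chi_{t\varphi})\ge0 .
\]
The sign comes from the positivity of $g_2'$ on $\HH$ (\cite[Lemma 5.1]{FangMaGarding}, \cite[Lemma 2.19]{ChenGhoshNieGeodesics}). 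This already gives $\mathcal A(\varphi)\ge0$. Moreover $q(t):=-h'(t)$ is a polynomial in $t$ of degree at most $n-1$, because $g_2'$ has degree $n-1$ in $\chi_{t\varphi}$, and it is nonnegative on $[0,1]$. Lemma \ref{lem:simple} therefore gives
\[
\mathcal A(\varphi)\ge a_n\int_0^1(-h')\,dt=a_n\Big(\int_X\varphi\,g_2(\chi)-\int_X\varphi\,g_2(\chi_\varphi)\Big).
\]

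Now assume $I_2(\varphi)=0$. Since $h$ is decreasing, $h(1)\le\int_0^1h=I_2(\varphi)=0$, so the second term contributes at least $0$. For the first term, write $\varphi=s+u$ with $s=\sup_X\varphi$. By \eqref{eq:denominator-cone-normalization}, $\chi_\varphi>\omega$, and since $0\in\HH$, also $\chi>0$. Hence $u$ is $\chi$-psh with $\sup_X u=0$, and the standard $L^1$ compactness gives $\int_X(-u)\,\chi^n\le C$. As $g_2(\chi)$ is a fixed smooth positive volume form, it is bounded by a multiple of $\chi^n$, so $\int_X u\,g_2(\chi)\ge -C$. This gives $\int_X\varphi\,g_2(\chi)\ge V_2 s-C$. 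Lemma \ref{d1-sup-comparison}(1) gives $d_1(0,\varphi)\le 2V_2 s$, so
\[
\mathcal A(\varphi)\ge \frac{a_n}{2}\,d_1(0,\varphi)-a_nC,
\]
which is the claimed bound with $a=a_n/2$ and $b=a_nC$.

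I expect the main obstacle to be the integration-by-parts step. It requires checking carefully that the $t$-derivative of $g_2(\chi_{t\varphi})$ is exactly $\ddc\varphi\wedge g_2'(\chi_{t\varphi})$ with a positive constant, in the normalisation of \cite{ChenGhoshNieGeodesics}. It also requires that the positivity of $g_2'$ on $\HH$ be strong enough, as a positive $(n-1,n-1)$-form, to make $\sqrt{-1}\partial\varphi\wedge\bar\partial\varphi\wedge g_2'$ nonnegative pointwise. The remaining steps are routine.
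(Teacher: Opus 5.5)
Your proposal is correct and follows essentially the same route as the paper: the affine path $t\varphi$, integration by parts in $t$ to write $\mathcal A(\varphi)=\int_0^1 t\,q(t)\,dt$ with $q\ge0$ a polynomial of degree at most $n-1$, Lemma~\ref{lem:simple} to compare with $\int_X\varphi\,(g_2(\chi)-g_2(\chi_\varphi))$, and then the $\chi$-psh $L^1$ bound together with Lemma~\ref{d1-sup-comparison}(1). The obstacle you flag is harmless: in coordinates that diagonalize $\chi_{t\varphi}$ with respect to $\omega$, the integrand $\sqrt{-1}\partial\varphi\wedge\bar\partial\varphi\wedge g_2'(\chi_{t\varphi})$ is a positive multiple of $\sum_i\partial_i g_2(\lambda)\,|\partial_i\varphi|^2\,\omega^n$, and this is nonnegative because $\partial_i g_2>0$ on $\Upsilon_{g_2}$.
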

\begin{proof}
By convexity of $\HH$, the affine segment $t\varphi\in \HH$, $0\le t\le1$. By \eqref{functional properties} and its derivative, we have 
\[\frac{d}{dt}(I_2(t\varphi))=\int_{X}\varphi g_{2}(\chi_{t \varphi}), \quad q_{\varphi}(t):=-\frac{d^{2}}{dt^{2}}(I_2(t\varphi))=\int_{X}\sqrt{-1}\partial \varphi \wedge \bar{\partial}\varphi\wedge g'_{2}(\chi_{t \varphi})\ge 0.\]
The function $q_\varphi$ is a polynomial of degree at most $n-1$.
Integration by parts in $t$ gives
\begin{equation}\label{eq:based-energy-moments}
 I_{2}(\varphi)
 =\int_{0}^{1}\frac{dI_2(t\varphi)}{dt}\,dt=(t\frac{dI_2(t\varphi)}{dt})|_{t=0}^{1}-\int_{0}^{1}t\frac{d^2I_2(t\varphi)}{dt^2}dt=\int_{X}\varphi g_{2}(\chi_{\varphi})+\int_{0}^{1}tq_{\varphi}(t)\,dt.
 \end{equation}
 So by Lemma \ref{lem:simple}, there is $a_n>0$ such that \[
 \mathcal A(\varphi)=I_{2}(\varphi)-\int_{X}\varphi g_{2}(\chi_{\varphi})=\int_0^1 t q_\varphi(t)\,dt \ge a_n\mathcal I(\varphi),\]
 where
 \[\mathcal I(\varphi):=\int_0^1 q_\varphi(t)\,dt=\int_X \varphi \bigl(g_2(\chi)-g_2(\chi_\varphi)\bigr)\]
 is the analogy of Aubin's I-functional. This should not be confused with the Aubin-Mabuchi mixed energy function $I_2$.

Now assume $I_2(\varphi)=0$ and put $s=\sup_X \varphi$. By \eqref{eq:I2-positive-volume}, $s\ge0$.
Since $\varphi-s\le 0$ is a $\chi$-plurisubharmonic
function, and the fixed volume form $g_2(\chi)$ is comparable to $\chi^n$, we see that
\[\int_X \varphi\,g_2(\chi)-V_2s=\int_X (\varphi-s)g_2(\chi)\ge C\int_X (\varphi-s)\chi^n \ge -C.\]
So
\[\mathcal I(\varphi)=\int_X \varphi\,g_2(\chi)+\mathcal A(\varphi)\ge \int_X \varphi\,g_2(\chi) \ge V_2 s-C.\]
This finishes the proof by Lemma \ref{d1-sup-comparison}.
\end{proof}

For $\varepsilon>0$, we perturb the based functional by
\begin{equation}\label{eq:solution-based-perturbation}
 \mathcal J_\varepsilon(\varphi)=J_1(\varphi)-\varepsilon\mathcal A(\varphi),
 \qquad
 g_{1,\varepsilon}(\chi_\varphi)
 =g_1(\chi_\varphi)-\varepsilon \ddc\varphi\wedge g_{2}'(\chi_\varphi).
\end{equation}
Along any smooth path $\varphi_t\in\HH$,
\begin{equation}
    \begin{split}
        \frac{d}{dt}\mathcal{A}(\varphi_t)&=\frac{d}{dt}I_2(\varphi_t)-\frac{d}{dt}\int_X \varphi_t g_2(\chi_{\varphi_t})\\
        &=\int_X \frac{d\varphi_t}{dt}g_2(\chi_{\varphi_t})-\int_X \frac{d\varphi_t}{dt}g_2(\chi_{\varphi_t})-\int_X \varphi_t \frac{d g_2(\chi_{\varphi_t})}{dt}\\
        &=-\int_{X}\varphi_t\sqrt{-1}\partial\bar{\partial}\frac{d\varphi_t}{dt}\wedge g_{2}'(\chi_{\varphi_t})\\
        &=-\int_{X}\frac{d\varphi_t}{dt}\sqrt{-1}\partial\bar{\partial}\varphi_t\wedge g_{2}'(\chi_{\varphi_t}),
    \end{split}
\end{equation}
and 
\begin{equation}
\label{eq:solution-based-first-variation}
    \begin{split}
        \frac{d}{dt}\mathcal{J}_{\varepsilon}(\varphi_t) =-\int_{X}\frac{d\varphi_t}{dt} g_{1,\varepsilon}(\chi_{\varphi_t}).
    \end{split}
\end{equation}
In particular, $g_{1,\varepsilon}(\chi)=g_1(\chi)=0$.
So $\int_X g_{1,\varepsilon}(\chi_\varphi)=0$.
Let $g_{1,\varepsilon}'(\chi_\varphi)$ be the formal derivative of $g_{1,\varepsilon}(\chi_\varphi)$,
and put $F_\varepsilon(\chi_\varphi)=\frac{g_{1,\varepsilon}(\chi_\varphi)}{g_2(\chi_\varphi)}$.

\begin{lem}\label{lem:solution-perturbation-ellipticity}
There exists $0<\varepsilon_0<1/n$ such that, for
$0<\varepsilon<\varepsilon_0$ and every $\varphi\in\mathcal H$,
\begin{equation}\label{eq:solution-perturbation-positivity}
 g_{1,\varepsilon}'(\chi_\varphi)
       -F_\varepsilon(\chi_\varphi)g_2'(\chi_\varphi) \ge 0,
 \qquad
 F_\varepsilon(\chi_\varphi)\le1-n\varepsilon.
\end{equation}
\end{lem}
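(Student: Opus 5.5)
The plan is to reduce both inequalities in \eqref{eq:solution-perturbation-positivity} to pointwise statements about eigenvalues. We then control the single non-symmetric term by the derivative perturbation of Lemma~\ref{lem:derivative-perturbation}. Fix $x\in X$ and choose coordinates with $\omega=\sum\sqrt{-1}dz_i\wedge d\bar z_i$ and $\chi_\varphi=\operatorname{diag}(\lambda_1,\ldots,\lambda_n)$ at $x$, with $\lambda\in\Upsilon_{g_2}$. Since $g_2'(\chi_\varphi)$ is then diagonal with entries $\partial_i g_2(\lambda)$, and $\ddc\varphi=\chi_\varphi-\chi$, we get pointwise
\[
 \frac{g_{1,\varepsilon}(\chi_\varphi)}{\omega^n}
 = g_1(\lambda)-\varepsilon\sum_i\lambda_i\partial_i g_2(\lambda)+\varepsilon\sum_i\chi_{i\bar i}\,\partial_i g_2(\lambda).
\]
The middle term is symmetric: by Euler's identity $\sum_i\lambda_i\partial_i\sigma_k=k\sigma_k$, it equals $\operatorname{Pol}_n(xf_2')$ up to normalization. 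The last term depends on $\chi$, but only through the bound $|\chi_{i\bar i}|\le C_\chi$, where $C_\chi$ depends only on $\chi,\omega$. Moreover $\partial_i g_2>0$ on $\Upsilon_{g_2}$. The formal derivative $g_{1,\varepsilon}'$ is computed in the same frame.

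\emph{Step 1 (the bound $F_\varepsilon\le 1-n\varepsilon$).}
\begin{itemize}
\item First we show $F=g_1/g_2<1$ on $\Upsilon_{g_2}$. Since $F$ is increasing in each $\lambda_i$ (Lemma~\ref{Ellipticity and Concavity}(1)), $F(\lambda)<\lim_{t\to\infty}F(\lambda+t\mathbf 1)=1$. Quantitatively, $1-F\ge c\,(\sum_i\partial_i g_2)/g_2$, using $c^{(1)}_{n-1}<c^{(2)}_{n-1}$ as in Proposition~\ref{prop:f3-f4-hypotheses}.
\item Next we show $\sum_i\lambda_i\partial_i g_2\ge n g_2-C\sum_i\partial_i g_2$. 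Writing $g_2=\sum c_k\sigma_k$ and using Euler, $n g_2-\sum\lambda_i\partial_i g_2=\sum_k (n-k)c_k\sigma_k$. This involves only $\sigma_k$ with $k\le n-1$, which are controlled by $\sum_i\partial_i g_2$ because $\lambda_i>1$ (see \eqref{eq:denominator-cone-normalization}).
\item Combining, $F_\varepsilon\le 1-n\varepsilon-\bigl(c-\varepsilon(C+C_\chi)\bigr)\sum_i\partial_i g_2/g_2$. This gives $F_\varepsilon\le 1-n\varepsilon$ once $\varepsilon(C+C_\chi)\le c$.
\end{itemize}
If the comparison of lower-order $\sigma_k$ with $\sum_i\partial_i g_2$ turns out to be awkward, I would instead argue through Lemma~\ref{lem:derivative-perturbation}. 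That lemma is exactly what makes a small multiple of $\operatorname{Pol}_n f_2'=\tfrac1n\sum_i\partial_i g_2$ absorbable into $g_1$ while preserving proper position.

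\emph{Step 2 (the positivity $g'_{1,\varepsilon}-F_\varepsilon g'_2\ge0$).} In the diagonal frame this is the Hermitian-matrix analogue of Theorem~\ref{PositiveDerivative}(3), applied to the pair $(g_2,\,g_{1,\varepsilon})$. The plan is to sandwich $g_{1,\varepsilon}$ between symmetric polynomials in proper position with $g_2$. Using $\chi_{i\bar i}\ge -C_\chi$, the $\chi$-term is bounded below by $-\varepsilon C_\chi\sum_i\partial_i g_2$, which is $-n\varepsilon C_\chi$ times the polarization of $f_2'$. Correspondingly, I will show that
\[
 \tilde f_\varepsilon:=f_1-\varepsilon x f_2'-\varepsilon C_\chi f_2'+\eta f_2'
\]
satisfies $f_2\prec\tilde f_\varepsilon$ strictly for all small $\varepsilon$. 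This follows from Lemma~\ref{lem:derivative-perturbation} together with the induction argument of Proposition~\ref{Strict proper position}. The point is that the sign conditions \eqref{eq:derivative-perturbation-sign} near each $r_k$ are open conditions, and $-\varepsilon(x+C_\chi)f_2^{(k+1)}$ is a small perturbation on compact sets. At infinity, the top two coefficients behave like those of $f_1-\varepsilon xf_2'$, still monic up to the factor $1-n\varepsilon>0$; this is why $\varepsilon_0<1/n$. Theorem~\ref{PositiveDerivative} then yields $\partial_i$-positivity of the quotient. Finally, the off-diagonal entries of $g'_{1,\varepsilon}-F_\varepsilon g'_2$ vanish in our frame, because every piece is a mixed wedge with diagonal $\chi_\varphi$ except the $\chi$-term. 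That term contributes $-\varepsilon$ times a derivative of $\chi\wedge g_2'(\chi_\varphi)$, which is of size $\varepsilon C_\chi\,\partial_i\partial_j g_2$. It is dominated by the strict margin from $\eta$, by Lemma~\ref{lem:flow-linearization-upper}-type bounds $\partial_i\partial_j g_2\lesssim \partial_i g_2/\lambda_j$.

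\emph{Main obstacle.} Step 2 is the hard part, specifically handling the non-symmetric $\chi$-contribution (including its off-diagonal second-derivative terms) uniformly over all of $\Upsilon_{g_2}$, not just on compact sets. I will first try to show that the strict margin supplied by $\eta f_2'$ in Lemma~\ref{lem:derivative-perturbation} is comparable to $\partial_i g_2$ uniformly, both near $\partial\Upsilon_{g_2}$ and at infinity. I would then choose $\varepsilon_0$ with $\varepsilon_0(C_\chi+C)\ll\eta$, where $\eta$ is fixed first.
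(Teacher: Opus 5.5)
\textbf{There is a genuine gap: Step 2, which you yourself flag as the main obstacle, is not proved, and the tools you propose for it fail.} Step 2 is the real content of the lemma, since the bound $F_\varepsilon\le 1-n\varepsilon$ follows from it by monotonicity along $\alpha+t\omega$.

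\emph{Why the sandwich does not help.} Proper position of a symmetric comparison polynomial such as $\tilde f_\varepsilon$ controls only the quotient $\operatorname{Pol}_n(\tilde f_\varepsilon)/g_2$. A two-sided bound on the \emph{values} of $g_{1,\varepsilon}$ by such polarizations gives no information on the \emph{derivative} quantity $\theta\wedge(g_{1,\varepsilon}'-F_\varepsilon g_2')$.

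\emph{Why the off-diagonal estimate fails.} The leftover $\chi$-contribution to this derivative is $\varepsilon g_2\,D_\theta D_\chi\log g_2$, which blows up like $1/g_2$ at $\partial\Upsilon_{g_2}$. The entrywise bound $\partial_i\partial_j g_2\lesssim \partial_i g_2/\lambda_j$ you invoke is false there. Lemma~\ref{lem:flow-linearization-upper} relies on $F\ge -C$ and Proposition~\ref{prop:comparison}, which hold along the flow but not on all of $\HH$. Concretely, for $f_2=(x-1)^n$, i.e.\ $g_2=\prod_i(\lambda_i-1)$, one has
\[
\partial_i\partial_j g_2=\frac{\lambda_j}{\lambda_j-1}\cdot\frac{\partial_i g_2}{\lambda_j},
\]
which is unbounded as $\lambda_j\downarrow 1$. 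In that example, write $A=\alpha-\omega$ and $\theta=\sqrt{-1}\xi\wedge\bar\xi$. The $\chi$-term equals $-\varepsilon g_2\langle \chi A^{-1}\xi,A^{-1}\xi\rangle$. It is absorbed by the $\eta$-margin $\eta g_2|A^{-1}\xi|^2$ only because the off-diagonal and diagonal entries assemble into one quadratic form in $v=A^{-1}\xi$, bounded by $C|v|^2$. No entrywise domination sees this.

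\emph{A secondary error in Step 1.} Your claim that the lower $\sigma_k$ are controlled by $\sum_i\partial_i g_2$ because $\lambda_i>1$ also fails for this $g_2$. Take $\lambda=(1+\delta,1+\delta,M,\dots,M)$ with $n\ge 3$ and let $\delta\to 0$.

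\textbf{The missing idea is the paper's reduction to rank-one lines.}
\begin{enumerate}
\item Fix $\alpha\in\Upsilon_{g_2}(\omega)$ and $\theta=\sqrt{-1}\xi\wedge\bar\xi$. Since $\theta\wedge\theta=0$, the forms $g_1$, $g_2$ and $g_{1,\varepsilon}$ (including its $\chi$-term) are all affine along $\alpha_t=\alpha+t\theta$.
\item Hence $g_2(\alpha_t)$ vanishes at a unique $t_0<0$, and
\[
 t_0\,\theta\wedge\bigl(g_{1,\varepsilon}'-F_\varepsilon g_2'\bigr)(\alpha)=g_{1,\varepsilon}(\alpha_{t_0}).
\]
So the derivative inequality reduces to the sign of $g_{1,\varepsilon}$ at a single boundary point.
\item At that point, monotonicity of $F_\eta=(g_1+\eta\,\omega\wedge g_2')/g_2$ (Lemma~\ref{lem:derivative-perturbation}) forces $g_1(\alpha_{t_0})+\eta\,\omega\wedge g_2'(\alpha_{t_0})\le 0$.
\item Meanwhile $\alpha_{t_0}\ge 0$, $g_2'(\alpha_{t_0})\ge 0$ and $\chi\le C\omega$ give $-\varepsilon(\alpha_{t_0}-\chi)\wedge g_2'(\alpha_{t_0})\le \varepsilon C\,\omega\wedge g_2'(\alpha_{t_0})$.
\item Therefore $g_{1,\varepsilon}(\alpha_{t_0})\le 0$ once $\varepsilon C<\eta$.
\end{enumerate}
This needs no estimate uniform up to $\partial\Upsilon_{g_2}$ and no separate treatment of off-diagonal entries. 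The bound $F_\varepsilon\le 1-n\varepsilon$ then follows from the first inequality by monotonicity along $\alpha+t\omega$ and the limit $1-n\varepsilon$ at infinity.
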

\begin{proof}
By Lemma~\ref{lem:derivative-perturbation}, choose $\eta>0$ such that
$f_2\prec f_1+\eta f_2'$. Corollary~\ref{polarizationproperposition} and
Theorem~\ref{PositiveDerivative} show that for any $\alpha\in\Upsilon_{g_2}$,
\[
 F_\eta(\alpha)=\frac{\Pol_n(f_1+\eta f_2')(\alpha)}{\Pol_n(f_2)(\alpha)}=\frac{g_1(\alpha)+\eta\,\omega\wedge g_2'(\alpha)}{g_2(\alpha)}
\]
is nondecreasing when a nonnegative $(1,1)$-form is added to $\alpha$, since the eigenvalues are nondecreasing by the Courant-Fischer-Weyl min-max principle.

Choose $\varepsilon_0<\frac{1}{n}$ to be determined later. It suffices to prove the inequalities pointwise for every $0<\varepsilon<\varepsilon_0$ and $\alpha\in\Upsilon_{g_2}(\omega)$.

To prove the first inequality, it suffices to test its wedge product
with any test form $\theta=\sqrt{-1}\xi\wedge\bar\xi$ for a non-zero 1-form $\xi$, normalized so that $\theta\le\omega$. Since $\theta^2=0$, each of
$g_1(\alpha_t)$, $g_2(\alpha_t)$, and
$g_{1,\varepsilon}(\alpha_t)$ is affine in $t$, where $\alpha_t=\alpha+t\theta$. We write 
\[
g_2(\alpha_t)=g_2(\alpha)+t\theta\wedge g_2'(\alpha).
\]
Since $\theta\wedge g_2'(\alpha)>0$, there exists $t_0=\frac{-g_2(\alpha)}{\theta\wedge g_2'(\alpha)}$ such that $\alpha_t\in\Upsilon_{g_2}$ if and only if $t>t_0$. Then $g_2'(\alpha_{t_0})\ge 0$ by continuity.

By the monotonicity of $F_\eta$, $g_1(\alpha_{t_0})+\eta\,\omega\wedge g_2'(\alpha_{t_0})\le0$ since otherwise $F_\eta(\alpha_t)$ will converge to $\infty$ as $t\downarrow t_0$.
Choosing $C>0$ such that $\chi\le C\omega$ on $X$, we obtain
\[
 \begin{split}
 g_{1,\varepsilon}(\alpha_{t_0})
 &=g_1(\alpha_{t_0})-\varepsilon(\alpha_{t_0}-\chi)\wedge g_2'(\alpha_{t_0})\\
 &\le g_1(\alpha_{t_0})+\varepsilon C\,\omega\wedge g_2'(\alpha_{t_0})\\
 &\le-(\eta-\varepsilon C)\,\omega\wedge g_2'(\alpha_{t_0}) \le 0
 \end{split}
\] as long as we choose $\varepsilon_0<\frac{|\eta|}{C}$.
The affinity of $g_{1,\varepsilon}(\alpha+t\theta)$ and the definition of $t_0$ then imply
\[
 t_0\theta\wedge\bigl(g_{1,\varepsilon}'(\alpha)
       -F_\varepsilon(\alpha)g_2'(\alpha)\bigr)
 =g_{1,\varepsilon}(\alpha_{t_0})-g_{1,\varepsilon}(\alpha)
       +g_{1,\varepsilon}(\alpha)=g_{1,\varepsilon}(\alpha_{t_0}) \le 0.
\]
This proves the first inequality in \eqref{eq:solution-perturbation-positivity} since $t_0<0$.

For the upper bound, consider the pointwise ray $\alpha+t\omega \in \Upsilon_{g_2}$ for $t\ge 0$.
The first inequality in \eqref{eq:solution-perturbation-positivity} gives
\[
\frac{d}{dt}F_\varepsilon(\alpha+t\omega)
=\frac{\omega\wedge(g_{1,\varepsilon}'-F_\varepsilon g_2')(\alpha+t\omega)}
{g_2(\alpha+t\omega)} \ge 0.
\]
Both $g_1(\alpha)$ and $g_2(\alpha)$ have leading term $\alpha^n$,
whereas $(\alpha-\chi)\wedge g_2'(\alpha)$ has leading term
$n\alpha^n$. Therefore
$\lim_{t\to\infty}F_\varepsilon(\alpha+t\omega)=1-n\varepsilon>0$ by requiring $\varepsilon_0<\frac{1}{n}$. Monotonicity gives $F_\varepsilon(\alpha)\le1-n\varepsilon$, as required.
\end{proof}

\begin{prop}\label{prop:solution-implies-coercivity}
Fix $0<\varepsilon<\varepsilon_0$ as in
Lemma~\ref{lem:solution-perturbation-ellipticity}. Then
\begin{equation}\label{eq:perturbed-global-minimum}
 \mathcal J_\varepsilon(\varphi)\ge\mathcal J_\varepsilon(0)
 \quad\text{for every }\varphi\in\HH.
\end{equation}
Consequently, there exist $\delta>0$ and $C$ such that
$J_1(\varphi)\ge\delta d_1(0,\varphi)-C$ for every
$\varphi\in\HH$ with $I_2(\varphi)=0$.
\end{prop}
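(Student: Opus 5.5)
The plan is to prove the global minimum \eqref{eq:perturbed-global-minimum} by convexity of $\mathcal J_\varepsilon$ along the regularized ($\epsilon$-)geodesics of \cite{ChenGhoshNieGeodesics}. Then we deduce coercivity from Lemma~\ref{lem:based-energy-coercivity}.

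Fix $\varphi\in\HH$ and let $\varphi_{s,t}$, $t\in[0,1]$, be the $s$-geodesic from $0$ to $\varphi$. It solves the perturbed geodesic equation, which in the notation of \cite{ChenGhoshNieGeodesics} reads
\[
\bigl(\ddot\varphi\, g_2(\chi_\varphi)-\sqrt{-1}\partial\dot\varphi\wedge\bar\partial\dot\varphi\wedge g_2'(\chi_\varphi)\bigr)=s\,\omega^n
\]
up to the paper's normalization. Differentiating \eqref{eq:solution-based-first-variation} once more in $t$ gives
\[
\frac{d^2}{dt^2}\mathcal J_\varepsilon(\varphi_t)=-\int_X\ddot\varphi_t\,g_{1,\varepsilon}(\chi_{\varphi_t})-\int_X\dot\varphi_t\,\ddc\dot\varphi_t\wedge g_{1,\varepsilon}'(\chi_{\varphi_t}).
\]
Integrating the second term by parts turns it into $\int_X\sqrt{-1}\partial\dot\varphi\wedge\bar\partial\dot\varphi\wedge g'_{1,\varepsilon}$; this uses that $g'_{1,\varepsilon}$ is closed, which holds because it is built from closed forms. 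In the first term, write $g_{1,\varepsilon}=F_\varepsilon g_2$ and substitute $\ddot\varphi g_2$ from the geodesic equation. This yields
\[
\frac{d^2}{dt^2}\mathcal J_\varepsilon=\int_X\sqrt{-1}\partial\dot\varphi\wedge\bar\partial\dot\varphi\wedge\bigl(g'_{1,\varepsilon}-F_\varepsilon g_2'\bigr)-s\int_XF_\varepsilon\,\omega^n .
\]
The first term is nonnegative by the first inequality of Lemma~\ref{lem:solution-perturbation-ellipticity}. The error term is bounded below by $-s(1-n\varepsilon)\int_X\omega^n$ by the second inequality, since $-F_\varepsilon\ge -(1-n\varepsilon)$. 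So $t\mapsto\mathcal J_\varepsilon(\varphi_{s,t})+Cs\,t^2$ is convex.

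At $t=0$, formula \eqref{eq:solution-based-first-variation} gives
\[
\frac{d}{dt}\mathcal J_\varepsilon(\varphi_{s,t})\Big|_{t=0}=-\int_X\dot\varphi_{s,0}\,g_{1,\varepsilon}(\chi)=0,
\]
because $g_{1,\varepsilon}(\chi)=g_1(\chi)=0$. Convexity therefore yields
\[
\mathcal J_\varepsilon(\varphi)=\mathcal J_\varepsilon(\varphi_{s,1})\ge\mathcal J_\varepsilon(0)-Cs.
\]
Letting $s\to0$ proves \eqref{eq:perturbed-global-minimum}. The main obstacle is regularity. The $s$-geodesics are only $C^{1,1}$-type, or smooth but only with $s$-dependent bounds, and they must stay in $\HH$ so that Lemma~\ref{lem:solution-perturbation-ellipticity} applies pointwise. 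I would take the smooth $s$-geodesics provided in \cite{ChenGhoshNieGeodesics}, which lie in $\HH$ for each $s>0$, so the computation above is legitimate for fixed $s$. The only input needed as $s\to0$ is that the endpoints are fixed, and no uniform estimates are required.

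For the consequence, take $\varphi$ with $I_2(\varphi)=0$. Since $\mathcal J_\varepsilon(0)=J_1(0)-\varepsilon\mathcal A(0)=0$, we get $J_1(\varphi)\ge\varepsilon\mathcal A(\varphi)$. Lemma~\ref{lem:based-energy-coercivity} then gives
\[
J_1(\varphi)\ge\varepsilon a\,d_1(0,\varphi)-\varepsilon b,
\]
which is the required coercivity with $\delta=\varepsilon a$. Finally, undoing the reduction $\varphi_*=0$ via the triangle inequality for $d_1$ and the cocycle/additivity property of $J_1$ gives the statement for a general solution $\varphi_*$.
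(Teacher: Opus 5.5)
Your proposal is correct and follows the paper's proof essentially step for step. Both arguments compute the second variation along the smooth regularized geodesics of \cite{ChenGhoshNieGeodesics}, integrate by parts using closedness of $g'_{1,\varepsilon}$, and use positivity of $g'_{1,\varepsilon}-F_\varepsilon g_2'$ together with $F_\varepsilon\le 1-n\varepsilon$ to control the $O(\rho^2)$ error; both then use that the first derivative vanishes at $t=0$ because $g_{1,\varepsilon}(\chi)=0$, and conclude with Lemma~\ref{lem:based-energy-coercivity}. The only imprecision is that the right-hand side of the regularized geodesic equation is actually $-4\rho^2e^{2t}\widetilde g_2(\chi_{\rho,t})\ge 0$, not a multiple of $\omega^n$, so the $t$-uniform $O(\rho^2)$ bound on the error term needs $\int_X\widetilde g_2(\chi_{\rho,t})$ to be a cohomological constant, which is exactly how the paper handles it.
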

\begin{proof}
Fix an arbitrary $\varphi\in\HH$. We prove
\eqref{eq:perturbed-global-minimum} on smooth regularized
geodesics before passing to the limit.
For sufficiently small $\rho>0$, let $\varphi_{\rho,t}$,
$0\le t\le1$, be the smooth $\rho$-geodesic with
$\varphi_{\rho,0}=0$ and $\varphi_{\rho,1}=\varphi$, supplied by
\cite[Theorem 1.1]{ChenGhoshNieGeodesics}. The parameter $\rho$ regularizes the geodesic, whereas the perturbation size $\varepsilon$ is fixed.
Write $\chi_{\rho,t}=\chi+\ddc\varphi_{\rho,t}$.
By Lemma 3.2 and Proposition 2.21 of \cite{ChenGhoshNieGeodesics},
the geodesic equation can be written as:
\begin{equation}\label{eq:regularized-geodesic-error}
 \ddot\varphi_{\rho,t}\,g_2(\chi_{\rho,t})
 -\sqrt{-1}\partial\dot\varphi_{\rho,t}\wedge
       \bar\partial\dot\varphi_{\rho,t}\wedge
g_{2}'(\chi_{\rho,t})=-4\rho^2e^{2t}\widetilde g_2(\chi_{\rho,t})\ge 0,
\end{equation}
where $\widetilde g_2$ is the form defined in Lemma 3.2 of \cite{ChenGhoshNieGeodesics}.

Set $\mathcal J_\rho(t)=\mathcal J_\varepsilon(\varphi_{\rho,t})$.
The first-variation formula
\eqref{eq:solution-based-first-variation} and closedness of
$g_{1,\varepsilon}'(\chi_{\rho,t})$ give
\begin{equation}
    \begin{split}
        \mathcal J_{\rho}''(t)&=-\int_{X}\ddot\varphi_{\rho,t} g_{1,\varepsilon}(\chi_{\rho,t})-\int_{X}\dot\varphi_{\rho,t}g_{1,\varepsilon}'(\chi_{\rho,t})\wedge \sqrt{-1}\partial\bar{\partial}\dot\varphi_{\rho,t}\\
        &=-\int_{X}\ddot\varphi_{\rho,t} g_{1,\varepsilon}(\chi_{\rho,t})+\int_{X}\sqrt{-1}\partial(\dot\varphi_{\rho,t})\wedge\bar{\partial}(\dot\varphi_{\rho,t})\wedge g_{1,\varepsilon}'(\chi_{\rho,t}).
    \end{split}
\end{equation}

Substituting \eqref{eq:regularized-geodesic-error}, we obtain
\begin{equation}\label{eq:perturbed-second-variation}
 \begin{split}
 \mathcal J_\rho''(t)
 ={}&\int_X\sqrt{-1}\partial\dot\varphi_{\rho,t}\wedge
          \bar\partial\dot\varphi_{\rho,t}\wedge
          \bigl(g_{1,\varepsilon}'(\chi_{\rho,t})-F_{\varepsilon}(\chi_{\rho,t})g_{2}'(\chi_{\rho,t})
                    \bigr)
          +4\rho^2e^{2t} \int_X F_\varepsilon(\chi_{\rho,t})\widetilde g_2(\chi_{\rho,t})\\
 \ge{}&4\rho^2e^{2t}(1-n\varepsilon)\int_X \widetilde g_2(\chi_{\rho,t}).
 \end{split}
\end{equation}
 The first integral is nonnegative by Lemma \ref{lem:solution-perturbation-ellipticity}. For the second integral, $\int_X \widetilde g_2(\chi_{\rho,t})$ is independent of $\rho$ and $t$.

For every $\rho>0$, the initial derivative
\begin{equation}\label{eq:perturbed-initial-derivative}
 \mathcal J_\rho'(0)
 =-\int_X\dot\varphi_{\rho,0}\,g_{1,\varepsilon}(\chi)=0
\end{equation}
because $g_{1,\varepsilon}(\chi)=0$.
Integrating \eqref{eq:perturbed-second-variation} twice and
using \eqref{eq:perturbed-initial-derivative} gives
\[
 \mathcal J_\varepsilon(\varphi)-\mathcal J_\varepsilon(0)
 =\mathcal J_\rho'(0)+\int_0^1(1-t)\mathcal J_\rho''(t)\,dt\ge-(1-n\varepsilon)C\rho^2.
\]
Letting $\rho\to 0$ therefore proves $\mathcal J_\varepsilon(\varphi) \ge \mathcal J_\varepsilon(0)$ for all $\varphi \in \HH$.

Finally, if $I_2(\varphi)=0$, then Lemma~\ref{lem:based-energy-coercivity} gives
\[
J_1(\varphi)\ge\varepsilon\mathcal A(\varphi)-C
\ge\varepsilon a\,d_1(0,\varphi)-\varepsilon b-C.
\]
This finishes the proof.
\end{proof}

\section{Examples}

In this section, we show that interlacing together with strict proper position gives a sufficient condition for all the polynomial hypotheses in Theorem~\ref{thm:main}. The case $f_1(x)=x^{n-1}(x-1)$ and $f_2(x)=x^n$ recovers the results by Collins and Sz\'ekelyhidi \cite{CollinsSzekelyhidiJFlow}. Moreover, if $\hat\theta$ is in the supercritical phase $(\frac{n-2}{2}\pi, \frac{n}{2}\pi)$, then for any $\frac{n-2}{2}\pi<\theta_0<\hat\theta<\frac{n}{2}\pi$, the polynomials
\[
        f_1(x)=\Im(e^{-\sqrt{-1}\hat\theta}(1+\sqrt{-1}x)^n), \quad f_2(x)=\Im(e^{-\sqrt{-1}\theta_0}(1+\sqrt{-1}x)^n)
\]
also satisfy all of these polynomial hypotheses. This reduces to the Collins-Yau \cite{CollinsYauGeodesics} and Chu-Lee \cite{ChuLeeHypercriticalDHYM} case if $\hat\theta$ is in the hypercritical phase $(\frac{n-1}{2}\pi, \frac{n}{2}\pi)$ and $\theta_0=\hat\theta-\frac{\pi}{2}$.

\begin{prop}\label{prop:strict-interlacing}
Let $f_1,f_2$ be monic real polynomials of degree $n\ge2$, with real roots $b_1,\ldots,b_n$ and $a_1,\ldots,a_n$, respectively, counted with multiplicity. Suppose that
\begin{equation}\label{eq:weak-interlacing-order}
 a_1\le b_1\le a_2\le b_2\le\cdots\le a_n\le b_n,
\end{equation}
and that $f_2\prec f_1$ strictly. Then $f_1,f_2$ are right-Noetherian, $f_2\prec f_1$ strongly strictly, and $f_2\idealprec f_1$.
\end{prop}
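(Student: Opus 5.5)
The plan is to reduce everything to elementary facts about real-rooted polynomials and their partial fraction expansions, and then feed the results into Theorem~\ref{PositiveUnivariable}, Proposition~\ref{Strict proper position} and Corollary~\ref{IdealProperUnivariable}.

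\emph{Step 1: right-Noetherianity and interlacing of all derivatives.} Since $f_1,f_2$ are real-rooted, Rolle's theorem shows that every $f_i^{(k)}$ is real-rooted. Its largest root lies in $[\,\cdot\,,r(f_i^{(k-1)})]$, which gives $r(f_i)\ge r(f_i')\ge\cdots$, so both polynomials are right-Noetherian. Next, by the Hermite--Kakeya--Obreschkoff theorem, \eqref{eq:weak-interlacing-order} is equivalent to $(1-s)f_2+sf_1$ being real-rooted for all $s\in[0,1]$, with roots moving to the right as $s$ increases. Differentiation preserves real-rootedness of the whole pencil, so $f_2^{(k)}$ and $f_1^{(k)}$ again interlace in the same order for every $0\le k<n$.

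\emph{Step 2: the quotient.} For each $k$, cancel the common roots and write
\[
\frac{f_1^{(k)}}{f_2^{(k)}}=\frac{n!/(n-k)!\cdot(\text{monic})}{n!/(n-k)!\cdot(\text{monic})}=1+\sum_j\frac{c_j}{x-a_j^{(k)}}.
\]
Because the roots of $f_2^{(k)}$ lie weakly to the left of those of $f_1^{(k)}$, the interlacing forces every residue to satisfy $c_j\le0$. Differentiating term by term gives
\[
\Bigl(\frac{f_1^{(k)}}{f_2^{(k)}}\Bigr)'=\sum_j\frac{-c_j}{(x-a_j^{(k)})^2}\ge0,
\qquad
\Bigl(\frac{f_1^{(k)}}{f_2^{(k)}}\Bigr)''=\sum_j\frac{2c_j}{(x-a_j^{(k)})^3}\le0
\]
on $(r(f_2^{(k)}),\infty)$. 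So the quotient is nondecreasing and concave there. Interlacing also yields $r(f_2^{(k)})\le r(f_1^{(k)})$ and $f_1^{(k)}\le0$ on $[r(f_2^{(k)}),r(f_1^{(k)}))$. Hence Theorem~\ref{PositiveUnivariable} gives $f_2\prec f_1$, and Corollary~\ref{IdealProperUnivariable} then gives $f_2\idealprec f_1$.

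\emph{Step 3: strictness at every level.} For an interlacing pair, I first show that strict proper position is equivalent to $r(f_2)<r(f_1)$.
\begin{itemize}
\item If $a_n<b_n$, then $f_1(a_n)\neq0$, so (iv) of Proposition~\ref{Strict proper position} holds with $q=0$.
\item If $a_n=b_n$ and $a_n$ has multiplicity $m$ in $f_2$, then $a_{n-m+1}=\cdots=a_n$. Interlacing squeezes $b_{n-m+1},\dots,b_n$ to the same value, so $f_1$ vanishes to order $\ge m$ at $a_n$ and (iv) fails.
\end{itemize}
The hypothesis therefore says exactly $r(f_2)<r(f_1)$. Equivalently, the term with pole at the largest root $a_n$ in the expansion above survives with $c<0$, so $(f_1/f_2)'>0$ there. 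It then suffices to prove $r(f_2^{(k)})<r(f_1^{(k)})$ for all $k$, and I will do this by induction on $k$. Set $F_k=f_1^{(k)}/f_2^{(k)}$. Following the computation in Lemma~\ref{Linear-SSPrec}, write
\[
f_1^{(k+1)}=F_k'\,f_2^{(k)}+F_k\,f_2^{(k+1)}.
\]
At $\rho=r(f_2^{(k+1)})$ I will aim to show $f_1^{(k+1)}(\rho)<0$, which forces $r(f_1^{(k+1)})>\rho$. Roughly, $F_k<0$ at points below $r(f_1^{(k)})$ and beyond $r(f_2^{(k)})$, while $F_k'f_2^{(k)}$ is controlled by the Wronskian $f_1^{(k)\prime}f_2^{(k)}-f_1^{(k)}f_2^{(k)\prime}\ge0$. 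A possibly cleaner alternative is to use the monotone pencil: if the largest roots of $f_2^{(k+1)}$ and $f_1^{(k+1)}$ coincided, then the whole pencil $(1-s)f_2^{(k+1)}+sf_1^{(k+1)}$ would share that root. Integrating back would then give a coincidence of the top root structure of $f_2^{(k)}$ and $f_1^{(k)}$, contradicting the induction hypothesis via the multiplicity argument above.

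\emph{Main obstacle.} I expect Step~3 to be the hardest part, namely the propagation of strictness of the top roots from $f^{(k)}$ to $f^{(k+1)}$ when $\rho$ may coincide with $r(f_2^{(k)})$ because of multiplicities. Here I would first try the sign argument at $\rho$, treating separately the cases $\rho<r(f_2^{(k)})$ and $\rho=r(f_2^{(k)})$ (a multiple root). In the second case I would use the local asymptotics near $r(f_2^{(k)})$ as in the proof of Proposition~\ref{Strict proper position}. Once strictness holds for every $k$, Definition~\ref{SSPrec} gives $f_2\prec f_1$ strongly strictly, which completes the proof.
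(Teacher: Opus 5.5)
\textbf{The gap is in Step 3.} You claim that, for an interlacing pair, ``$f_2\prec f_1$ strictly $\Leftrightarrow$ $r(f_2)<r(f_1)$''. This is false in the direction you need. Your first bullet asserts that $a_n<b_n$ forces $f_1(a_n)\neq 0$, but the interlacing chain permits $b_{n-1}=a_n$. For example, take $f_2=x(x-1)$ and $f_1=(x-1)(x-2)$. They satisfy $0\le 1\le 1\le 2$ and $r(f_2)=1<2=r(f_1)$. Yet $m=1$ and $f_1(1)=0$, so (iv) of Proposition~\ref{Strict proper position} fails. Indeed $f_1/f_2=(x-2)/x\to -1$ as $x\downarrow 1$; this is the example of Remark~\ref{strict-prec-1d}, shifted.

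In general, if $a_n$ has multiplicity $m$ in $f_2$, the squeeze only shows that $f_1$ vanishes at $a_n$ to order at least $m-1$, and $b_{n-m}$ may also coincide with $a_n$. Strictness means the order is \emph{exactly} $m-1$, i.e.\ the pole at $a_n$ survives in the reduced fraction $p/q$. That is what your ``equivalently'' sentence says, but it is not the same as $r(f_2)<r(f_1)$. So even a complete proof that $r(f_2^{(k)})<r(f_1^{(k)})$ for all $k$ would not give $f_2^{(k)}\prec f_1^{(k)}$ strictly. That statement is true, and both your sign argument and your pencil alternative aim at it, but your final appeal to Definition~\ref{SSPrec} is unsupported.

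\textbf{The repair} is to propagate condition (iii) itself, in two cases.

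If the top root of $f_2^{(k)}$ is simple, then $\rho=r(f_2^{(k+1)})$ lies strictly between its two largest roots, so $f_2^{(k)}(\rho)<0$. The term $F_kf_2^{(k+1)}$ vanishes at $\rho$, so the sign of $F_k$ plays no role. Then $f_1^{(k+1)}(\rho)=F_k'(\rho)f_2^{(k)}(\rho)<0$, because $F_k'(\rho)=\sum_j(-c_j)/(\rho-a_j)^2>0$, with $F_k$ nonconstant by the inductive strictness. That is (iii) with $q=0$ at level $k+1$, which is more than $r(f_1^{(k+1)})>\rho$.

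If the top root $a$ has multiplicity $m\ge 2$, your identity cannot be evaluated at $\rho=a$, since $a$ is a pole of $F_k$. None is needed: strictness at level $k$ together with the squeeze forces $f_1^{(k)}$ to vanish at $a$ to order exactly $m-1$ with $f_1^{(k+m-1)}(a)<0$. One differentiation then gives (iii) at level $k+1$ with $q=m-2<m-1$. This is precisely the paper's two-case induction, and it needs no local asymptotics or pencil argument.

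\textbf{Steps 1 and 2.} Your Step 2 reads monotonicity and concavity off the partial fractions and feeds them into Theorem~\ref{PositiveUnivariable} and Corollary~\ref{IdealProperUnivariable}. This is a valid elementary substitute for the paper's citation of Fang--Ma. In Step 1, Hermite--Kakeya--Obreschkoff concerns all real combinations $\lambda f_2+\mu f_1$, not just $s\in[0,1]$. Also, the preservation of the \emph{order} of interlacing under differentiation needs its own argument; the paper uses that $\partial_x$ preserves real stability of $f_2(x)y+f_1(x)$.
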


\begin{proof}
Rolle's theorem shows that both polynomials and all their nonconstant derivatives have all real roots, and have nonincreasing largest roots. So both polynomials are right-Noetherian. By the remark after Definition 1.1 and Lemma 1.8 of \cite{BorceaBrandenLeeYangI}, the interlacing condition with the order \eqref{eq:weak-interlacing-order} is equivalent to the real stability of $f_2(x)y+f_1(x)$. By Lemma 2.4(f) of \cite{WagnerStablePolynomials}, for all $k=0,\ldots,n-1$, the real stability of $f_2(x)y+f_1(x)$ implies the real stability of $f_2^{(k)}(x)y+f_1^{(k)}(x)$, which is equivalent to the interlacing condition for $f_2^{(k)}(x)$ and $f_1^{(k)}(x)$ with the order \eqref{eq:weak-interlacing-order}. By Proposition 4.3 of \cite{FangMaIdealGarding}, for all $k=0,\ldots,n-1$, $f_2^{(k)}\idealprec f_1^{(k)}$, and hence $f_2^{(k)}\prec f_1^{(k)}$. By induction, it suffices to prove that $f_2'\prec f_1'$ strictly.

After cancelling the common roots of $f_1$ and $f_2$, we get $\frac{f_1}{f_2}=\frac{p}{q}$ for degree $d$ polynomials $p$, $q$ with all real roots, such that the roots of $p$, $q$ are simple and strictly interlace with the order \eqref{eq:weak-interlacing-order}. Thus,
\begin{equation}\label{eq:interlacing-partial-fractions}
 \frac{f_1(x)}{f_2(x)}=\frac{p(x)}{q(x)}
 =1+\sum_{q(\alpha)=0}\frac{A_\alpha}{x-\alpha},
 \qquad A_\alpha=\frac{p(\alpha)}{q'(\alpha)}<0.
\end{equation}
We can prove \eqref{eq:interlacing-partial-fractions} by the fact that $p(x)-(1+\sum_{q(\alpha)=0}\frac{A_\alpha}{x-\alpha})q(x)$ has degree at most $d-1$ and vanishes at every root of $q$.

Let $m$ be the multiplicity of $f_2$ at $r(f_2)=a_n$. If $m\ge 2$, then $r(f_2')=r(f_2)$, and Proposition~\ref{Strict proper position}(iii) gives $f_2'\prec f_1'$ strictly. If $m=1$, Rolle's theorem shows that $a_{n-1}<r(f_2')<a_n$, so $f_2(r(f_2'))<0$. Differentiating \eqref{eq:interlacing-partial-fractions} and using $f_2'(r(f_2'))=0$, we obtain
\[
f_1'(r(f_2'))=-f_2(r(f_2'))\sum_{q(\alpha)=0}\frac{A_\alpha}{(r(f_2')-\alpha)^2}<0.
\]
This implies $f_2'\prec f_1'$ strictly by Proposition~\ref{Strict proper position}(iii).
\end{proof}

\begingroup
\small
\bibliographystyle{plain}
\bibliography{reference}

\begin{center}
\begin{minipage}{0.88\textwidth}
\small
\textbf{Gao Chen}\\[0.35em]
University of Science and Technology of China\\
No.\ 96 Jinzhai Road, Baohe District, Hefei, Anhui 230000, P.\,R.\ China\\[0.35em]
\href{mailto:chengao1@ustc.edu.cn}{\texttt{chengao1@ustc.edu.cn}}\\
\textbf{Kartick Ghosh}\\[0.35em]
University of Science and Technology of China\\
No.\ 96 Jinzhai Road, Baohe District, Hefei, Anhui 230000, P.\,R.\ China\\[0.35em]
\href{mailto:kghosh@ustc.edu.cn}{\texttt{kghosh@ustc.edu.cn}}\\
\textbf{Ziyue Wang}\\[0.35em]
University of Science and Technology of China\\
No.\ 96 Jinzhai Road, Baohe District, Hefei, Anhui 230000, P.\,R.\ China\\[0.35em]
\href{mailto:wangziyue@mail.ustc.edu.cn}{\texttt{wangziyue@mail.ustc.edu.cn}}
\end{minipage}
\end{center}

\endgroup
\end{document}